\documentclass[11pt]{amsart}   
\usepackage[english]{babel}
\usepackage{mathtools}
\usepackage{amssymb,amscd, graphics,color,latexsym,cancel}   
\usepackage[linktoc=all, pagebackref, hyperindex]{hyperref}%
\hypersetup{colorlinks,  citecolor=blue,  filecolor=blue,  linkcolor=blue,  urlcolor=black}

\usepackage{tikz-cd} 
\usepackage{extpfeil}

\usepackage{tikz}
\usetikzlibrary{matrix,calc}
\usepackage{mathrsfs}
\usepackage[all]{xy}
\usepackage{comment}
\usepackage{amsfonts}
\usepackage[normalem]{ulem}

\usepackage{euscript}
\usepackage{amsmath}
\usepackage{ifpdf}
\usepackage{cleveref}
\newcommand{\lar}{\longrightarrow}
\newcommand{\surjects}{\twoheadrightarrow}

\newtheorem{Theorem}{Theorem}[section]
\newtheorem{Lemma}[Theorem]{Lemma}
\newtheorem{Corollary}[Theorem]{Corollary}
\newtheorem{Proposition}[Theorem]{Proposition}

\theoremstyle{definition}
\newtheorem{Remark}[Theorem]{Remark}
\newtheorem{Example}[Theorem]{Example}

\newtheorem{Definition}[Theorem]{Definition}
\newtheorem{Question}[Theorem]{Question}
\def\sqr#1#2{{\vcenter{\hrule height.#2pt
			\hbox{\vrule width.#2pt height#1pt \kern#1pt
				\vrule width.#2pt}
			\hrule height.#2pt}}}
\def\phi{\varphi}

\def\VaVa{{\mathcal V}\kern-5pt {\mathcal V}}
\def\gr#1#2{{\rm gr}\, _{#1}(#2)}
\def\gr{{\rm gr}\,}
\def\hht{{\rm ht}\,}
\def\depth{{\rm depth}\,}

\def\Min{{\rm Min}\,}
\def\codim{{\rm codim}\,}

\def\ker{{\rm ker}\,}
\def\grade{{\rm grade}\,}
\def\rk{\rm rank}

\def\sym#1#2{\mbox{\rm Sym}_{#1}(#2)}

\def\Ext#1#2#3#4{{\rm Ext}\,^{#1}_{#2}({#3},{#4})}

\def\supp#1{{\rm Supp}\, (#1)}

\def\ini{\mbox{\rm in}}

\def\Rees{{\mathcal R}}
\def\sym{{\mathrm{Sym}}}
\def\cl#1{{\mathcal #1}}

\def\Rees{\mathcal{R}}

\def\phi{\varphi}
\def\Rees{{\cal R}}
\def\hht{{\rm ht}\,}
\def\grade{{\rm grade}\,}
\def\rt{{\rm rt}\,}

\def\fa{{\mathfrak a}}

\def\fm{{\mathfrak m}}
\def\fn{{\mathfrak n}}

\def\fp{{\mathfrak p}}

\def\fm{{\mathfrak m}}
\def\fn{{\mathfrak n}}

\def\cl#1{{\cal #1}}
\def\rk{\rm rank}

\newcommand{\excise}[1]{}

\def\NZQ{\mathbb}               

\def\PP{{\NZQ P}}

\def\G{{\mathcal G}}

\def\Q{{\mathcal Q}}
\def\opn#1#2{\def#1{\operatorname{#2}}} 
\opn\chara{char} \opn\length{\ell} \opn\pd{pd} \opn\rk{rk}
\opn\projdim{proj\,dim} \opn\injdim{inj\,dim} \opn\rank{rank}
\opn\depth{depth} \opn\grade{grade} \opn\height{height}
\opn\embdim{emb\,dim} \opn\codim{codim}
\def\OO{{\mathcal O}}
\opn\Tr{Tr} \opn\bigrank{big\,rank}
\opn\superheight{superheight}\opn\lcm{lcm}
\opn\trdeg{tr\,deg}
	\opn\reg{reg} \opn\lreg{lreg} \opn\ini{in} \opn\lpd{lpd}
	\opn\size{size} \opn\sdepth{sdepth}
	\opn\link{link}\opn\fdepth{fdepth}\opn\lex{lex}
	\opn\tr{tr}
	\opn\type{type}
	\opn\div{div} \opn\Div{Div} \opn\cl{cl} \opn\Cl{Cl}
	\opn\Spec{Spec} \opn\Supp{Supp} \opn\supp{supp} \opn\Sing{Sing}
	\opn\Ass{Ass} \opn\Min{Min}\opn\Mon{Mon}
	\opn\Ho{H}
	\opn\Ann{Ann} \opn\Rad{Rad} \opn\Soc{Soc}
	\opn\Im{Im} \opn\Ker{Ker} \opn\Coker{Coker} \opn\Am{Am}
	\opn\Hom{Hom} \opn\Tor{Tor} \opn\Ext{Ext} \opn\End{End}
	\opn\Aut{Aut} \opn\id{id}
	
	\opn\nat{nat}
	\opn\pff{pf}
	\opn\Pf{Pf} \opn\GL{GL} \opn\SL{SL} \opn\mod{mod} \opn\ord{ord}
	\opn\Gin{Gin} \opn\Hilb{Hilb}\opn\sort{sort}
	\opn\PF{PF}\opn\Ap{Ap}\opn\HF{HF}\opn\indeg{indeg}
	\opn\aff{aff} \opn
	\con{conv} \opn\relint{relint} \opn\st{st}
	\opn\lk{lk} \opn\cn{cn} \opn\core{core} \opn\vol{vol}  \opn\inp{inp} \opn\nilpot{nilpot}
	\opn\link{link} \opn\star{star}\opn\lex{lex}\opn\set{set}
	\opn\width{wd}
	\opn\Fr{F}
	\opn\QF{QF}
	\opn\G{G}
	\opn\type{type}\opn\res{res}
	\opn\log{Log}
	\opn\gr{gr}
	\def\Rees{{\mathcal R}}
	\def\pot#1#2{#1[\kern-0.28ex[#2]\kern-0.28ex]}

	\opn\dirlim{\underrightarrow{\lim}}
	\opn\inivlim{\underleftarrow{\lim}}
\begin{document}
		
		\title[ relation type of points]{The relation type of point configurations in the projective plane }
\author{Ethan Cotterill}
\address{Universidade Estadual de Campinas (UNICAMP) \\ Instituto de Matem\'atica, Estat\'{\i}stica e Computação Cient\'{\i}fica (IMECC) \\ Departamento de Matem\'atica \\ 	Rua S\'ergio Buarque de Holanda, 651\\ 13083-970 Campinas-SP, Brazil}
\email{cotterill.ethan@gmail.com}
\author{Amir Mohammad Kach khaali}
\address{Department of Mathematics, Institute for Advanced Studies in Basic Sciences (IASBS), Zanjan 45137-66731, Iran}
\email{amirmohammadkckh@iasbs.ac.ir}
\author{Abbas Nasrollah Nejad}
\address{Department of Mathematics, Institute for Advanced Studies in Basic Sciences (IASBS), Zanjan 45137-66731, Iran}
\email{abbasnn@iasbs.ac.ir}

		\subjclass[2010]{13A30, 13D02, 14E05, 14N20}   	
		\keywords{Relation type,  finite sets of points,
Hilbert--Burch resolution, special fiber, Jacobian dual}
\begin{abstract}
We study the {\it relation type} of ideals of finite reduced sets of points in the projective plane; 
for a given ideal, this is the maximal $T$-degree of a minimal generator of the Rees algebra. Our main focus is on point configurations whose defining ideals are not necessarily
linearly presented, 
with an emphasis on almost collinear configurations. We prove that $\rt(X)\in\{1,3\}$ whenever $X\subseteq \PP^2_k$ is a finite set of at most ten points; and we characterize the configurations of relation type $3$ in this range. We then show that a configuration of eleven points in generic position has relation type $5$,
thereby yielding the first occurrence of relation type larger than $3$. Finally, we exhibit a configuration of $17$ points with relation type $4$; 
and we formulate some questions regarding the spectrum 
of admissible relation types of point configurations.
\end{abstract}
		
\maketitle
\section*{Introduction}

Let $I=(f_1,\ldots,f_m)$ be an ideal of a Noetherian ring $R$. The Rees algebra of $I$ is
\[
\Rees_R(I)=\bigoplus_{n\geq 0}I^nt^n=R[It]\subseteq R[t].
\]
Algebraically, it records all powers of $I$  simultaneously; geometrically, it is the homogeneous coordinate ring of the blowup of the ambient space along the subscheme defined by $I$. Rees algebras and their defining equations have been studied by many authors; see, for instance~\cite{BCS,Her,SUV,UV93,Vas91,VasBook}. 

\medskip
Choosing generators of $I$ yields a standard graded presentation
\[
S=R[T_1,\ldots,T_m]\lar \Rees_R(I),\qquad T_i\longmapsto f_it.
\]
Let $\Q$ denote its kernel. 
The \emph{relation type} of
$I$, denoted $\rt_R(I)$, is the largest $T$-degree of a minimal homogeneous generator of
$\Q$; equivalently, it is the least integer $\ell$ such that 
the blowup of $\mbox{Proj}(R)$ along $I$ is 
cut out by equations of $T$-degree at most $\ell$.

\medskip
Thus relation type measures the complexity of the equations defining the blowup. The
case $\rt_R(I)=1$ is the classical case of linear type: the symmetric algebra
$\operatorname{Sym}_R(I)$ already coincides with the Rees algebra, or equivalently the
linear equations arising from the syzygies of $I$ define the blowup scheme-theoretically.
Higher relation type detects genuinely nonlinear equations.

\medskip
Relation type is also closely connected with Andr\'e--Quillen homology and with the
module of effective relations; see \cite{andre,quillen1,FV1}. This homological
viewpoint shows that relation type is an intrinsic invariant in several natural
categories. Planas-Vilanova proved that relation type is an invariant of affine
algebraic varieties \cite{FV-RT}; by \cite{ANM}, it is also an invariant of analytic
and formal germs. For a projective scheme $X\subseteq \PP^n_k$ with saturated
homogeneous defining ideal $I_X\subseteq A=k[x_0,\ldots,x_n]$, we define
\[
\rt(X):=\rt_A(I_X)
\]
to be the relation type of the ideal of the affine cone over $X$.

\medskip
Throughout this paper, $k$ is an algebraically closed field of characteristic zero and $R=k[x,y,z]$. We study $\rt(X)$ for finite  sets of distinct points $X\subseteq \PP^2_k$. The defining ideal $I_X$ is saturated of height two, hence perfect, and its Hilbert--Burch resolution is strongly influenced by the incidence geometry of $X$. This makes point sets in the plane a natural class in which to study how equations of blowups reflect elementary geometry.

\medskip
A fundamental 
point of departure is the first gap theorem for ideals of points in the plane. According to
\cite{ANM}, recalled below as Proposition~\ref{prop:points-plane-gap}, 
we have
\[
\rt(X)=1\qquad\text{or}\qquad \rt(X)\geq 3
\]
for every finite set of points $X\subseteq \PP^2_k$.
In other words, 
2 is never the relation type of a configuration of points in the plane. This is because whenever $\mu(I_X)\leq 3$, $I_X$ is either a complete intersection or a strict almost complete intersection; since $I_X$ is locally a complete intersection on the punctured spectrum, 
it follows that
$\rt(X)=1$. On the other hand, if $\mu(I_X)\geq 4$, then $I_X$ is not of linear type by the
Herzog--Simis--Vasconcelos generator bound \cite[Proposition~2.4]{HSV}. Moreover, the
Andr\'e--Quillen description of relation type, together with
\cite[Remark~2.9]{HSV}, shows that $\rt(I_X)\leq 2$ would force $\rt(I_X)=1$.

\medskip
When $I_X$ is linearly presented and $\mu(I_X)\geq 4$, the Morey--Ulrich theorem gives
the expected equations of the Rees algebra 
in terms of the maximal minors of a Jacobian dual
matrix; see \cite[Theorem~1.3]{MU}. Hence the first gap immediately gives
$\rt(X)=3$. The main difficulty begins when $I_X$ is not linearly presented. This is the
problem suggested in \cite[Example~3.7]{ANM}: 
to compute the relation type of non-linearly presented ideals of point configurations.
Here we solve this problem
for large families of 
almost collinear configurations and for all point sets of small
cardinality.

\medskip
A first source of relation type one comes from low-degree containment. If $X$ is
collinear, then $I_X$ is a complete intersection. More generally, by Burch's theorem, in
the form stated in \cite[Corollary~3.8]{EisSyz}, if a finite set of points in
$\PP^2_k$ lies on a curve of degree $d$, then its defining ideal is generated by at most
$d+1$ elements. In particular, if $X$ is contained in a conic, then $I_X$ is generated by
at most three elements, and therefore $\rt(X)=1$; see
Proposition~\ref{prop:points-on-conic}. Thus the first interesting configurations are
those not contained in a conic, especially when their defining ideals are not linearly
presented.

\medskip
The main technical 
content of the paper concerns $(s-r)$-fold collinear configurations,
introduced in \cite{AZ}. Such a configuration is a set
\[
X=Y\cup Z\subseteq \PP^2_k,
\]
where $Y$ consists of $s-r$ points on a distinguished line $L$ and $Z$ consists of the
remaining $r$ points off $L$. If $L=V(\ell)$ and there exists a form
$F\in(I_Z)_{|Y|}$ whose image modulo $\ell$ cuts out $Y$ on $L$, then
\[
I_X=\ell I_Z+(F).
\]
This basic double-link description 
leads to an explicit Hilbert--Burch resolution of
$I_X$ in terms of one for $I_Z$, and it is the central construction used throughout the
paper.

\medskip
Using this construction together with Jacobian-dual matrices and some additional structural criteria for 
Rees algebras, we prove several structural results for 
$(s-r)$-fold collinear configurations. The first treats the case where the residual block is itself almost collinear: if $Z$ has $r-1$ points on a line and one point off that line, then $\rt(X)=1$ 
in exactly one exceptional geometric
case; in all other cases $\rt(X)=3$; see
Theorem~\ref{thm:rt-residual-r-1-fold}. The second treats triangular residual blocks in
{\it generic 
position}: if $|Z|=\binom{d+1}{2}$, $Z$ has generic (i.e., maximum possible) Hilbert function, and the
collinear block has cardinality at least $d+1$, then $\rt(X)=3$; see
Theorem~\ref{thm:rt-triangular-block}.

\medskip
We then obtain explicit classifications for the first nontrivial residual cardinalities, corresponding to $(s-4)$-, $(s-5)$-, and $(s-6)$-fold collinear configurations. For $r\leq 6$,
the Hilbert function and the generators of $I_Z$ are controlled by elementary incidence
conditions relative to lines and conics. This is also why 
our 
complete incidence-by-incidence
classification 
stops at $r=6$: at $r=7$ and above, linear systems of cubics passing through the
residual block $Z$ enter in an essential way and the relation type is no longer determined by
line and conic incidences alone.

\medskip
The first main global consequence of our structural results is the complete characterization of possible relation types for point configurations of small cardinality. In
Theorem~\ref{thm:rt-up-to-ten-points}, we prove that for every finite set of distinct points $X\subseteq \PP^2_k$ with $4\leq |X|\leq 10$, we have
\[
\rt(X)\in\{1,3\}.
\]
Moreover, for every cardinality $6\leq |X|\leq 10$, we characterize precisely those point
configurations with relation type $3$. The proof combines the almost collinear
classifications with Hilbert function arguments and 
specific implementations of the
Jacobian dual method.

\medskip
In particular, $\rt(X) \leq 3$ whenever $|X| \leq 10$; and this characterization of relation types in small cardinalities is sharp. Namely, in Theorem~\ref{thm:eleven-generic-position-rt-five}, we show
that eleven points in general position have relation type
\[
\rt(X)=5.
\]
Indeed, for eleven general points, $I_X$ is generated by four quartics and has Hilbert--Burch column degrees $(1,1,2)$. By \cite[Corollary~C]{CidRuiz}, the associated rational map satisfies
\[
\deg(\phi)\deg_{\PP^3}(Y)=e_2(1,1,2)=1\cdot 1+1\cdot 2+1\cdot 2=5.
\]
Since the rational map's image is not a plane, we have $\deg_{\PP^3}(Y)>1$. Hence $\deg(\phi)=1$ and $\deg_{\PP^3}(Y)=5$. Thus the special fiber has an implicit equation of degree $5$, and this equation lifts to a Rees equation of $T$-degree $5$. Conversely, the saturation description of the defining ideal of the Rees algebra, together with Jouanolou's inertia-form
bound for bidegrees $(1,1),(1,1),(2,1)$ \cite[\S3.11.19]{JouanolouInertia}, shows that
no equation of higher $T$-degree is needed.

\medskip
Consequently, relation type strictly greater than $3$ first manifests at cardinality eleven. 
This, in turn, motivates the spectrum problem for eleven points,
formulated as Question~\ref{ques:eleven-points-spectrum}: is every set of eleven points
of relation type $1$, $3$, or $5$?

\medskip
The latter problem requires assuming the configuration be specifically of cardinality 11. 
Indeed, in Example~\ref{ex:seventeen-points-rt-four}, we give a reduced set of
$17$ points whose defining ideal is generated by four quintics, has Hilbert--Burch
column degrees $(1,2,2)$, and has relation type $4$. The 
reason comes from the
special fiber: for column degrees $(1,2,2)$ the degree formula yields
\[
1\cdot 2+1\cdot 2+2\cdot 2=8.
\]
When the associated rational map has degree $2$ onto its image, the image is a quartic
surface, producing an essential Rees equation of $T$-degree $4$.

\medskip
We close the paper by discussing the basic configurations $B_d$ of Geramita--Maroscia, which have
generic Hilbert function and equigenerated defining ideals but Hilbert--Burch matrices
with quadratic entries; see \cite{GM}. These provide natural test cases beyond the
linearly presented setting. In particular, computations show that $\rt(B_6)=12$. We
single out the determination of possible relation types of reduced
point configurations of fixed cardinality in general, and of the
Geramita--Maroscia configurations $B_d$ in particular, as problems warranting further attention.

\section{Relation Type and the Geometry of Points}
\label{sec:rt-geometry-points}

In this section we review the notion of relation type for Rees algebras and explain how, via affine cones, it gives an invariant of projective schemes. 
We then specialize to
finite sets of points in $\PP^2_k$.

\medskip
Let $A$ be a Noetherian ring and let $I=(f_1,\ldots,f_m)$ be an ideal of $A$. Consider the standard graded presentation
\[
\Phi:S=A[T_1,\ldots,T_m]\lar \Rees_A(I),\qquad T_i\longmapsto f_i t
\]
in which the grading is the $T$-grading; so $\deg T_i=1$, and elements of $A$ have degree
$0$. Let $\Q:=\ker\Phi$ be the defining ideal of the Rees algebra. For $\ell\geq 1$,
let $\Q\langle \ell\rangle$ denote the ideal of $S$ generated by all homogeneous
elements of $\Q$ of $T$-degree at most $\ell$.

\begin{Definition}\label{def:relation-type-ideal}
The \emph{relation type} of $I$, denoted $\rt_A(I)$, is the least integer $\ell\geq 1$
for which $\Q=\Q\langle \ell\rangle$. Equivalently, $\rt_A(I)$ is the largest
$T$-degree of a minimal homogeneous generator of the defining ideal $\Q$ of
$\Rees_A(I)$.
\end{Definition}
Let $\sym_A(I)$ 
denote the symmetric algebra of $I$. Then  
$\sym_A(I)\simeq S/\Q\langle 1\rangle$,
and the natural map $\operatorname{Sym}_A(I)\surjects \Rees_A(I)$ has kernel
$\Q/\Q\langle 1\rangle$. So $\rt_A(I)=1$ if and only if this map is an isomorphism.
In this case, $I$ is said to be of \emph{linear type}. In particular, ideals generated by regular sequences are of linear type.

\medskip
Relation type also admits an interpretation in terms of Andr\'e--Quillen homology; namely:
$$\rt_R(I)=\min \{r\geq 1\ \colon \Ho_2(\Rees_R(I),R,R)_n=\Ho_1(R,\Rees_R(I),R)_n=0\ \mbox{for all }\ n\geq r+1   \}$$
	where $\Ho_2(\Rees_R(I),R,R)_n$ and $\Ho_1(R,\Rees_R(I),R)_n$ are the $n$-th graded component of the first and second Andr\'e-Quillen homologies; see~\cite[Definition 1.1]{ANM}. It follows that $\rt_R(I)$ is independent of the presentation of $\Rees_R(I)$. This homological viewpoint shows that relation type is
intrinsic in several natural categories; see \cite[Theorem~2.3 ]{ANM}.

\medskip
If $(A,\fm)$ is local with residue field $k=A/\fm$, or if $A$ is a standard graded
$k$-algebra with irrelevant maximal ideal $\fm$, the {\it special fiber of} (the Rees algebra of) $I$ is
\[
\mathcal F(I):=\Rees_A(I)\otimes_A A/\fm
\simeq k[T_1,\ldots,T_m]/\mathcal K,
\qquad
\mathcal K=\frac{\Q+\fm S}{\fm S}.
\]
By Nakayama's lemma, every essential equation of the special fiber lifts to
an equation of the Rees algebra, and its degree gives a lower bound for the relation type. The converse need not hold: an essential equation of $\Q$ may vanish modulo
$\fm$. Such equations are torsion equations of the symmetric algebra. If $A=k[x_0,\ldots,x_n]$ and $I$ is generated by homogeneous forms of the same degree,
then $\mathcal F(I)$ is the homogeneous coordinate ring of the image of the rational map
\[
\PP^n_k\dashrightarrow \PP^{m-1}_k,\qquad
p\longmapsto [f_1(p):\cdots:f_m(p)].
\]
Thus the special fiber is the projective image of the linear system generated by $f_1,\ldots,f_m$, whereas the Rees algebra describes the 
blowup of $\PP^n_k$ along the corresponding ideal.

\medskip
We also recall the {\it Jacobian dual} construction and the notion of expected equations. Let
\[
A^q \stackrel{\varphi}{\lar} A^m \lar I\lar 0
\]
be a presentation of $I=(f_1,\ldots,f_m)$, and set
$T=[T_1\ \cdots\ T_m]$. As above, the defining ideal of the symmetric algebra of $I$ is
\[
\mathcal L=I_1(T\varphi)=\Q\langle 1\rangle\subseteq S.
\]
Let $\fa=(a_1,\ldots,a_s)\subseteq A$ be an ideal containing the entries of $\varphi$.
Since the entries of $T\varphi$ belong to $\fa S$, we have
\[
T\varphi=[a_1\ \cdots\ a_s]B(\varphi)
\]
for some $s\times q$ matrix $B(\varphi)$ with entries in $S$, homogeneous of
$T$-degree $1$. We say $B(\varphi)$ is a \emph{Jacobian dual} of $\varphi$ with respect
to the sequence $a_1,\ldots,a_s$. In general, $B(\varphi)$ is not unique.

\medskip
The Jacobian dual is a natural source of higher-degree equations of the Rees algebra.
Indeed, by Cramer's rule, if the image of $\fa$ in $\Rees_A(I)$ contains a nonzerodivisor,
then
\[
\mathcal L+I_s(B(\varphi))\subseteq \Q .
\]
When equality holds, $\Q=\mathcal L+I_s(B(\varphi))$,
we say that the defining ideal of the Rees algebra has the \emph{expected form}. Thus, in
the expected form, the defining ideal of the Rees algebra is generated by the linear equations defining the
symmetric algebra together with the maximal minors of a Jacobian dual matrix.

\medskip
Recall that an ideal $I\subseteq A$ {\it satisfies $G_s$} if
$\mu(I_\fp)\leq \dim A_\fp$ for every $\fp\in V(I)$ with
$\dim A_\fp\leq s-1$. A fundamental theorem of
Morey--Ulrich states that if $A=k[x_1,\ldots,x_d]$ is a polynomial ring over a field,
$I\subseteq A$ is a perfect ideal of height two, $I$ is linearly presented, and $I$ satisfies
$G_d$, then the defining ideal of the Rees algebra has the expected form
$\Q=\mathcal L+I_d(B(\varphi))$. 
Moreover, $\Rees_A(I)$ is Cohen--Macaulay; see \cite[Theorem~1.3]{MU}.
We also mention that several weakenings of the Morey--Ulrich hypotheses have been studied,
including weakenings of the $G_d$ condition and of the linear-presentation assumption;
see \cite{BoswellMukundan,FRS,Lan2014}.

\medskip
These results have important implications for the relation type. 
For example, whenever the defining ideal of the Rees algebra has the expected form, the relation type is controlled by the size of the Jacobian dual minors. On the other hand, when the defining ideal is not of expected form, additional equations may appear, often from the special fiber or from torsion in the symmetric algebra, and these equations may increase the relation type. 
In this paper, we use the geometry of the special fiber, Jacobian dual equations, and the saturation of the symmetric algebra to detect the possible degrees of essential equations of the Rees algebra.

\medskip
We shall use the following invariance theorem for the relation type of $k$-algebras.
\begin{Theorem}[\cite{ANM,FV-RT}]\label{thm:quotient-invariance}
Let $k$ be a field. Let $A$ and $B$ be polynomial rings over $k$, and let
$I\subseteq A$ and $J\subseteq B$ be ideals. If $A/I\simeq B/J$ as $k$-algebras, then
$\rt_A(I)=\rt_B(J)$. 
\end{Theorem}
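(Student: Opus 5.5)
The approach is to reduce the statement to two facts. First, relation type is unchanged when the polynomial ring is enlarged by free variables. Second, relation type is unchanged by automorphisms of a polynomial ring. Both are then combined with a standard "common ambient" trick. The underlying reason the theorem holds is the André–Quillen description recalled above, in which $\rt_A(I)$ is read off from the vanishing of graded pieces of $\Ho_2(\Rees_A(I),A,A)$ and $\Ho_1(A,\Rees_A(I),A)$. These are governed by the cotangent complex of $\Rees_A(I)$ over $A$, equivalently by the module of effective relations $E(I)=\bigoplus_n E(I)_n$, where $E(I)_n=\Q_n/(S_1\Q_{n-1})$. The relation type is $\max\{n\colon E(I)_n\neq 0\}$, which does not depend on the chosen generators. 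I will work with this characterization throughout.

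\emph{Step 1: adding variables.} Let $A'=A[u_1,\ldots,u_r]$ and $I'=IA'$. Since $A\to A'$ is flat, $\Rees_{A'}(I')\simeq \Rees_A(I)\otimes_A A'$ and $\Q'=\Q A'[T]$, so $E(I')_n\simeq E(I)_n\otimes_A A'$. Faithful flatness then gives $\rt_{A'}(I')=\rt_A(I)$.

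\emph{Step 2: changing presentations.} Let $A=k[x_1,\ldots,x_a]$ and $B=k[y_1,\ldots,y_b]$, and fix an isomorphism $\psi\colon A/I\to B/J$. Lift $\psi$ and $\psi^{-1}$ to $k$-algebra maps $\alpha\colon A\to B$ and $\beta\colon B\to A$. In $C=k[x,y]$, consider the two ideals $I C+(y_j-\alpha'(y_j))$ and $JC+(x_i-\beta'(x_i))$, where $\alpha'(y_j)$ is a lift of $\psi^{-1}(\bar y_j)$ written in the $x$'s, and $\beta'(x_i)$ is a lift of $\psi(\bar x_i)$ written in the $y$'s. The standard argument shows these two ideals coincide: each kills exactly the kernel of $C\to A/I\simeq B/J$. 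Call this common ideal $K$.

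Moreover, the triangular change of variables $y_j\mapsto y_j-\alpha'(y_j)$ is a $k$-automorphism $\sigma$ of $C$, and $\sigma(IC+(y))=K$. Relation type is invariant under ring automorphisms, since $\sigma$ transports the Rees presentation. So it remains to show $\rt_C(IC+(y_1,\ldots,y_b))=\rt_A(I)$, and symmetrically for $J$.

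\emph{Step 3: adding variables to the ideal.} This is the main obstacle: showing $\rt_{A[y]}(IA[y]+(y_1,\ldots,y_b))=\rt_A(I)$. By induction it suffices to treat a single variable $y$, with $I'=IA[y]+(y)$. My plan is to use the isomorphism
\[
\Rees_{A[y]}(I')\simeq \Rees_{A[y]}(IA[y])[T_0]\big/\bigl(yT-T_0\cdot(\text{the image of }I)\bigr),
\]
where the quotient is by a suitable ideal of linear relations. The key input is that $y$ is regular on $A[y]/I^n A[y]$ for every $n$, so $\gr_{I'}(A[y])\simeq \gr_I(A)[\bar y]$, with $\bar y$ a polynomial variable in degree $1$. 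Concretely, the new relations needed are the Koszul-type linear relations $yT_i-f_iT_0$, and every higher relation among $f_1,\ldots,f_m,y$ reduces, modulo these and modulo $y$, to a relation among the $f_i$. Hence $E(I')_n\simeq E(I)_n\otimes_A A[y]$ for $n\ge 2$.

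Alternatively, one can argue through André–Quillen homology. The extension $A[y]\to A[y]/(y)=A$ is a regular quotient, so the Jacobi–Zariski sequence for $A\to A[y]\to \Rees$ shows that the relevant graded pieces of $\Ho_1$ and $\Ho_2$ in degrees $\ge 2$ coincide for $I$ and $I'$. I would try the Rees-algebra computation first, using the regularity of $y$ modulo all powers of $IA[y]$ to show that no new essential relations of degree $\ge 2$ appear and that none disappear. Degree $1$ needs a separate check, because of the convention $\rt\ge 1$, but that case is harmless.

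Combining the three steps gives $\rt_A(I)=\rt_C(K)=\rt_B(J)$.
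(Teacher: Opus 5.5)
\textbf{Steps 1 and 2 are fine.} Step~1 is correct. In Step~2, both ideals of $C=k[x,y]$ are indeed the kernel of $C\to A/I$, $x_i\mapsto\bar x_i$, $y_j\mapsto\psi^{-1}(\bar y_j)$, and the triangular automorphism carries $IC+(y_1,\dots,y_b)$ onto it. The paper itself gives no proof of this theorem; it only quotes it from the literature.

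\textbf{The gap is in Step 3.} You rightly call this step the heart of the matter, but you do not prove it, and the conclusion you state, $E(I')_n\simeq E(I)_n\otimes_AA[y]$ for $n\ge 2$, is false.

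For $n\ge2$ one has $I'E(I')_n=0$. To see this, write $g_0=y$ and $g_i=f_i$ for the generators of $I'$. If $G=\sum_jT_jG_j\in\Q'_n$, then $\sum_jg_jG_j\in\Q'_{n-1}$ and $g_iG-T_i\sum_jg_jG_j=\sum_j(g_iT_j-g_jT_i)G_j\in S'_{n-1}\Q'_1\subseteq S'_1\Q'_{n-1}$. So $y$ kills $E(I')_n$, whereas $E(I)_n\otimes_AA[y]$ is $y$-torsion-free. Your isomorphism therefore forces $E(I)_n=0$.

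For example, take $I=(x_1^2,x_1x_2,x_2^2)\subseteq k[x_1,x_2]$. Then $E(I)_2\simeq k$, spanned by the class of $T_1T_3-T_2^2$, and $E(I')_2\simeq k$, not $k[y]$. Consequently the faithful-flatness argument of Step~1 cannot be reused. The inequality $\rt(I)\le\rt(I')$ (``no essential relation disappears'') is left without proof, and the Jacobi--Zariski alternative is too vague to supply it.

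\textbf{How to fix it.} Both inequalities follow from two concrete facts. Put $S'=A[y][T_0,\dots,T_m]$ with $T_0\mapsto yt$.

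First, $\Q'=\Q S'+(yT_i-f_iT_0)_{1\le i\le m}$; this is exactly your displayed presentation of $\Rees_{A[y]}(I')$, which is correct but needs this argument. Repeatedly rewriting $yT_i\mapsto f_iT_0$ reduces any $F\in S'_n$, modulo these linear forms, to $\sum_{j<n}T_0^jG_j+T_0^ng$ with $G_j\in A[T_1,\dots,T_m]_{n-j}$ and $g\in A[y]$. Its image is $t^n\bigl(\sum_{j<n}y^jG_j(f)+y^ng\bigr)$. This vanishes iff every $G_j\in\Q$ and $g=0$, because $1,y,y^2,\dots$ is an $A$-basis of $A[y]$. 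Hence $\Q'$ is generated by generators of $\Q$ together with linear forms, which gives $\rt(I')\le\rt(I)$.

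Second, use the graded surjection $\rho\colon S'\to S$, $y\mapsto0$, $T_0\mapsto0$. It lies over the surjection $\Rees_{A[y]}(I')\to\Rees_A(I)$ induced by $y\mapsto0$, which sends $(I')^n$ onto $I^n$. Hence $\rho(\Q')\subseteq\Q$. Since $\Q\subseteq\Q'$ and $\rho|_S=\mathrm{id}$, we get $\rho(\Q')=\Q$. So the images of homogeneous generators of $\Q'$ of $T$-degree $\le r$ generate $\Q$, giving $\rt(I)\le\rt(I')$.

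In fact this shows $E(I')_n\simeq E(I)_n$ for $n\ge2$, with $y$ acting by $0$. With Step~3 established this way, induction on the number of new variables together with your Step~2 completes the proof.
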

We now recall how relation type is attached to affine and projective schemes.
\begin{Definition}\label{def:affine-scheme-rt}
Let $X$ be an affine scheme of finite type over a field $k$. The \emph{relation type} of $X$ is defined by
\[
\rt(X):=\rt(\OO_X(X)).
\]
More generally, if $X$ is a scheme of finite type over $k$, then
\[
\rt(X):=\max\{\rt(\OO_{X,x}) \mid x\in X\}.
\]
\end{Definition}
Since the category of affine schemes of finite type over $k$ is anti-equivalent to the category of finitely generated $k$-algebras, Theorem~\ref{thm:quotient-invariance} shows that the relation type of an affine $k$-scheme is well-defined and invariant under affine isomorphisms. Moreover, since relation type is local~\cite[Proposition~1.3]{ANM}, the second definition is intrinsic and gives an invariant for schemes of finite type over $k$.

\medskip        
We now pass to projective schemes.
\begin{Definition}\label{def:projective-scheme-rt}
Let $X\subseteq \PP^n_k$ be a projective subscheme with saturated homogeneous defining
ideal $I_X\subseteq A=k[x_0,\ldots,x_n]$. We define the \emph{relation type} of $X$ by
\[
\rt(X):=\rt_A(I_X).
\]
Equivalently, $\rt(X)$ is the relation type of the affine cone over $X$.
\end{Definition}

Thus the relation type is understood with respect to a given projective embedding.
This is the natural viewpoint in the present paper, since our main objects are finite
sets of points in $\PP^2_k$ and their saturated homogeneous defining ideals.

\begin{Remark}\label{rem:ci-linear-type}
If $I_X$ is a complete intersection, then $\rt(X)=1$. More generally, if the affine
cone over $X$ is locally a complete intersection on the punctured spectrum and $I_X$ is
a strict almost complete intersection, then $\rt(X)=1$ by
\cite[Proposition~3.4]{ANM}.
\end{Remark}

\begin{Proposition}\label{prop:projective-invariance}
Let $X,Y\subseteq \PP^n_k$ be projective subschemes. If $X$ and $Y$ are projectively
equivalent, then $\rt(X)=\rt(Y)$.
\end{Proposition}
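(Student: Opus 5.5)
Projective equivalence means there is a linear change of coordinates $g\in\GL_{n+1}(k)$, inducing a graded $k$-algebra automorphism $\sigma:A\to A$, $x_i\mapsto \sum_j g_{ij}x_j$, with $\sigma(I_X)=I_Y$. The first step is to check this carefully: a projectivity of $\PP^n_k$ carrying $X$ onto $Y$ as schemes pulls back the saturated ideal of $Y$ to the saturated ideal of $X$. This holds because $\sigma$ preserves homogeneity and the irrelevant ideal $\fm=(x_0,\ldots,x_n)$, so it commutes with saturation, $\sigma(J:\fm^\infty)=\sigma(J):\fm^\infty$. Hence $\sigma(I_X)=I_Y$ exactly, and not merely up to saturation.

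Given this, there are two equally short routes, and I would present the first with the second as a remark. \emph{Route 1 (invariance theorem):} $\sigma$ induces a $k$-algebra isomorphism $A/I_X\simeq A/I_Y$, so Theorem~\ref{thm:quotient-invariance}, applied with $B=A$, gives $\rt_A(I_X)=\rt_A(I_Y)$. By Definition~\ref{def:projective-scheme-rt} this is $\rt(X)=\rt(Y)$.

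\emph{Route 2 (direct):} extend $\sigma$ to $\tilde\sigma:A[t]\to A[t]$ by $t\mapsto t$. This restricts to an isomorphism $\Rees_A(I_X)\simeq\Rees_A(I_Y)$ of algebras graded in $t$. Choose generators $f_1,\ldots,f_m$ of $I_X$; then $\sigma(f_1),\ldots,\sigma(f_m)$ generate $I_Y$. The map $S\to S$ that extends $\sigma$ and fixes each $T_i$ preserves $T$-degree and carries $\Q_X$ onto $\Q_Y$. It therefore sends minimal homogeneous generators to minimal homogeneous generators of the same $T$-degree. Independence of the choice of generators is guaranteed by the Andr\'e--Quillen description recalled above.

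The only point that needs any care is the saturation compatibility in the first step. That is routine, since $\sigma(\fm)=\fm$. I expect no real obstacle.
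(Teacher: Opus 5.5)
Your Route 1 is exactly the paper's proof: the linear change of coordinates gives a graded $k$-algebra isomorphism $A/I_X\simeq A/I_Y$, and Theorem~\ref{thm:quotient-invariance} finishes. Your Route 2 is also correct as a self-contained remark; it uses only that $\rt$ does not depend on the choice of generators. The small slip in direction ($x_i\mapsto\sum_j g_{ij}x_j$ actually gives $\sigma(I_Y)=I_X$, not $\sigma(I_X)=I_Y$) is harmless, and the paper handles it by replacing $\sigma$ with its inverse if necessary.
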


\begin{proof}
Let $g\in \operatorname{PGL}_{n+1}(k)$ with $Y=g(X)$, and choose a representative
$\widetilde g\in \operatorname{GL}_{n+1}(k)$. This gives a graded $k$-algebra
automorphism $\sigma:k[x_0,\ldots,x_n]\lar k[x_0,\ldots,x_n]$ such that, after replacing
$\sigma$ by its inverse if necessary, $I_Y=\sigma(I_X)$. Hence
$k[x_0,\ldots,x_n]/I_X\simeq k[x_0,\ldots,x_n]/I_Y$ as graded $k$-algebras. The result
follows from Theorem~\ref{thm:quotient-invariance}.
\end{proof}

\begin{Remark}\label{rem:projective-invariance-use}
By Proposition~\ref{prop:projective-invariance}, we may freely apply projective changes
of coordinates when studying relation type. For example, a line containing many points
may be assumed to be $V(z)$.
\end{Remark}

We now specialize to finite sets of distinct points in $\PP^2_k$. Given a finite set
$X\subseteq \PP^2_k$, let $I_X\subseteq R=k[x,y,z]$ denote its homogeneous defining ideal. Then
\[
I_X=\bigcap_{p\in X} I_p
\]
and each $I_p$ is generated by two independent linear forms. Hence $I_X$ is a
saturated homogeneous ideal of height $2$, and $R/I_X$ is a reduced graded one-dimensional
Cohen--Macaulay ring.

\medskip
 For a finite reduced set of points
$X\subseteq \PP^2_k$, the ideal $I_X$ satisfies $G_3$. Indeed, the minimal primes of
$I_X$ are the ideals of points $I_p$, each generated by two independent linear forms. If
$\fp\in V(I_X)$ and $\dim R_\fp\leq 2$, then $\fp$ is one of these point ideals.
Therefore $(I_X)_\fp=(I_p)_\fp$ is generated by two elements, and
$\mu((I_X)_\fp)=2=\dim R_\fp$. In particular, $I_X$ is locally a complete intersection on the punctured spectrum.
Hence it is of linear type away from $\fm$. Letting
$\mathcal L=\Q\langle 1\rangle$ denotes the defining ideal of the symmetric algebra, we have
\[
\Q=\mathcal L:(\fm S)^\infty .
\]
Indeed, the kernel of the natural map
$\operatorname{Sym}_R(I)\to \Rees_R(I)$ is $\Q/\mathcal L$. Since $I$ is of linear type
at every prime $\fp\neq\fm$, this module is supported along $V(\fm S)$, and hence
$\Q\subseteq \mathcal L:\fm^\infty$. Conversely, $\Rees_R(I)\subseteq R[t]$ is a
domain, so $\Q$ is prime and $\fm S\not\subseteq \Q$. If
$H\in \mathcal L:\fm^\infty$, then $(\fm S)^N H\subseteq \mathcal L\subseteq \Q$ for
some $N$. Since $(\fm S)^N\not\subseteq\Q$, primeness gives $H\in\Q$. Therefore the
nonlinear equations of the Rees algebra are obtained by saturating the linear equations:
some survive modulo $\fm$ and define the special fiber, while the others disappear
modulo $\fm$ and represent torsion equations of the symmetric algebra.

\medskip
We now turn from the general structure of Rees equations to the relation type of finite sets of points in $\PP^2_k$. We begin by recording some basic geometric situations where the relation type is forced to be $1$.
\begin{Example}\label{ex:collinear-points}
Let $X\subseteq \PP^2_k$ be a set of $s\geq 2$ distinct collinear points. After a
projective change of coordinates, assume that $X\subseteq V(z)$. Then $I_X=(z,f(x,y))$, where $f(x,y)$ is a reduced product of $s$ pairwise non-proportional linear forms. Thus
$I_X$ is a complete intersection, and therefore $\rt(X)=1$.
\end{Example}

We first record a simple consequence of Burch's theorem which will be used repeatedly.

\begin{Proposition}\label{prop:points-on-conic}
Let $X\subseteq \PP^2_k$ be a finite set of distinct points. If $X$ is contained in a
conic, then $\rt(X)=1$.
\end{Proposition}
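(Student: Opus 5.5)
The plan is to use Burch's theorem to bound the number of generators of $I_X$, and then apply Remark~\ref{rem:ci-linear-type}. Suppose $X$ lies on a conic $C=V(q)$ with $q\in R_2$. We may assume $q\neq 0$ has degree exactly $2$. If $X$ also lies on a line, Example~\ref{ex:collinear-points} already gives $\rt(X)=1$; a reducible conic poses no problem, since we never need irreducibility. By Burch's theorem in the form \cite[Corollary~3.8]{EisSyz}, a finite set of points lying on a curve of degree $d$ has $\mu(I_X)\leq d+1$. With $d=2$ this gives $\mu(I_X)\leq 3$.

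We split into cases according to $\mu(I_X)$. Since $I_X$ has height $2$, we always have $\mu(I_X)\geq 2$.

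If $\mu(I_X)=2$, then two generators of a height-two ideal in the Cohen--Macaulay ring $R$ form a regular sequence. Hence $I_X$ is a complete intersection, and $\rt(X)=1$ by Remark~\ref{rem:ci-linear-type}.

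If $\mu(I_X)=3$, we must check that $I_X$ is a strict almost complete intersection and that the affine cone is locally a complete intersection on the punctured spectrum. The second condition was established just before Example~\ref{ex:collinear-points}: $I_X$ satisfies $G_3$, and $(I_X)_\fp=(I_p)_\fp$ is generated by two elements at every prime $\fp\neq\fm$ containing $I_X$. For the first condition, ``strict almost complete intersection'' means $\mu(I_X)=\hht(I_X)+1$ and $I_X$ is not a complete intersection. This holds because $\mu=3=2+1$, and minimality of the generating set excludes the complete intersection case. Remark~\ref{rem:ci-linear-type}, that is \cite[Proposition~3.4]{ANM}, then gives $\rt(X)=1$.

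The main obstacle is matching hypotheses precisely: confirming that the paper's meaning of ``strict almost complete intersection'' in \cite{ANM} is exactly $\mu=\hht+1$ with $I_X$ not a complete intersection. If instead the definition requires a minimal reduction or a specific form of Hilbert--Burch matrix, the fallback is a direct argument. Write the Hilbert--Burch $3\times 2$ matrix $\varphi$; then $\sym(I_X)$ is defined by two linear forms in $T$. Local linear type on the punctured spectrum together with the standard Huneke/Herzog--Simis--Vasconcelos result, which says that an almost complete intersection of height $2$ satisfying $G_3$ (i.e.\ $G_{\dim R}$) is of linear type, yields $\Q=\mathcal L$. Equivalently, $\mathcal L:(\fm S)^\infty=\mathcal L$, since $\sym$ is then a complete intersection in $S$ of codimension $2$ and hence has no $\fm$-torsion.
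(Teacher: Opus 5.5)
Your proposal is correct and follows the paper's proof. Burch's theorem gives $\mu(I_X)\leq 3$, so $I_X$ is either a complete intersection or a strict almost complete intersection, in exactly the sense $\mu=\hht+1$ that you use, and Remark~\ref{rem:ci-linear-type}, combined with $I_X$ being locally a complete intersection on the punctured spectrum, gives $\rt(X)=1$. Your fallback is also sound and avoids the citation: by the Huneke--Rossi dimension count, $\mathcal L$ is a height-$2$ complete intersection in $S$, so its associated primes have height $2$ and none contains $\fm S$, whence $\Q=\mathcal L:(\fm S)^\infty=\mathcal L$.
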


\begin{proof}
By Burch's theorem, in the form stated in \cite[Corollary~3.8]{EisSyz}, if a finite set
of points in $\PP^2_k$ lies on a curve of degree $d$, then its defining ideal is
generated by at most $d+1$ elements. Since $X$ is contained in a conic, we have
$\mu(I_X)\leq 3$. As $\hht I_X=2$, the ideal $I_X$ is either a complete intersection or
a strict almost complete intersection. Hence Remark~\ref{rem:ci-linear-type} gives
$\rt(X)=1$.
\end{proof}
\begin{Remark}\label{rem:not-linear-type-many-generators}
We will also make repeated use of the following observation: if $X\subseteq \PP^2_k$ is a finite
set of distinct points and $\mu(I_X)\geq 4$, then $I_X$ is not of linear type. Indeed,
if $I_X$ were of linear type, then by \cite[Proposition~2.4]{HSV}, for every prime
$\fp\supseteq I_X$, the localization $(I_X)_\fp$ could be generated by $\hht(\fp)$
elements. Taking $\fp=\fm=(x,y,z)$ gives a contradiction, since
$\mu_{R_\fm}((I_X)_\fm)=\mu(I_X)\geq 4>3=\hht(\fm)$.
\end{Remark}

Remark~\ref{rem:ci-linear-type}, Example~\ref{ex:collinear-points} and Proposition~\ref{prop:points-on-conic} give configurations whose defining ideals are of linear type. 
However, the following {\it gap phenomenon} is operative. 
Namely, for a finite set of points in $\PP^2_k$, the first genuine obstruction to linear type cannot occur in $T$-degree $2$; the first possible nonlinear equations of the blowup appear in $T$-degree at least $3$. The following result, proved in \cite[Proposition~3.6]{ANM}, makes this precise. We include the proof for completeness,
since this gap is one of the 
points of departure for the current paper.

\begin{Proposition}\label{prop:points-plane-gap}
Let $X\subseteq \PP^2_k$ be a finite set of distinct points. Then either $\rt(X)=1$ or
$\rt(X)\geq 3$. Moreover, if $I_X$ is linearly presented and $\mu(I_X)\geq 4$, then
$\rt(X)=3$.
\end{Proposition}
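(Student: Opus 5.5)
The plan is to split according to $\mu(I_X)$, as in the introduction.

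\emph{Case $\mu(I_X)\leq 3$.} Since $\hht I_X=2$, the ideal $I_X$ is either a complete intersection or a strict almost complete intersection. We showed above that $I_X$ satisfies $G_3$, so it is locally a complete intersection on the punctured spectrum. Remark~\ref{rem:ci-linear-type} then gives $\rt(X)=1$.

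\emph{Case $\mu(I_X)\geq 4$.} Remark~\ref{rem:not-linear-type-many-generators} shows that $I_X$ is not of linear type, so $\rt(X)\geq 2$. It remains to rule out $\rt(X)=2$. For this I would use the Andr\'e--Quillen characterization recalled after Definition~\ref{def:relation-type-ideal}. The relation type is the least $r$ such that the graded pieces $\Ho_1(R,\Rees_R(I),R)_n$ vanish for $n\geq r+1$. These pieces are the modules of effective relations $E(I)_n=\Q_n/(S_1\Q_{n-1})$. Suppose $\rt(X)=2$. Then $\Q$ is generated in $T$-degrees $1$ and $2$, and $E(I)_2=\Q_2/S_1\Q_1\neq 0$. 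I would then invoke \cite[Remark~2.9]{HSV}: for a height-two perfect ideal satisfying $G_3$ (hence syzygetic off $\fm$), the second symmetric power and its torsion are controlled by $\delta(I)=\ker(\sym_2(I)\to I^2)\cong \Ho_2$ of the Koszul complex modulo $I$-torsion-type terms. The point is that for a perfect codimension-two ideal in a three-dimensional regular ring, $\Ho_2$ of the Koszul complex vanishes in the relevant range. Hence $\delta(I)=0$, so $E(I)_2=0$.

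This is the main obstacle. If $\Ho_1(R,\Rees,R)_2=0$ but $\rt=2$, the minimal generators of $\Q$ in degree $2$ would have to be absent, which is a contradiction. So a nonlinear relation cannot first occur in degree $2$, and $\rt(X)\geq3$. I expect the delicate point to be checking that the HSV hypotheses (Cohen--Macaulay $R/I$, $G_3$ on the punctured spectrum, and depth conditions on Koszul homology) apply. I would take these from the structure of $I_X$ established above and from the strong Cohen--Macaulayness of perfect height-two ideals.

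\emph{Linearly presented case.} Assume in addition that $I_X$ is linearly presented and $\mu(I_X)\geq 4$. The ideal $I_X$ is perfect of height two in $R=k[x,y,z]$ and satisfies $G_3$, so the Morey--Ulrich theorem \cite[Theorem~1.3]{MU} gives $\Q=\mathcal L+I_3(B(\varphi))$. Here $B(\varphi)$ is the Jacobian dual with respect to $x,y,z$. Its entries are linear in $T$ because $\varphi$ is linear. The $3\times 3$ minors therefore have $T$-degree $3$, so $\rt(X)\leq 3$. Combined with $\rt(X)\neq 1$ and the gap $\rt(X)\geq 3$, this gives $\rt(X)=3$.
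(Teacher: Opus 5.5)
Your outline matches the paper's. You handle $\mu(I_X)\le 3$ via Remark~\ref{rem:ci-linear-type}, get non-linear type from Remark~\ref{rem:not-linear-type-many-generators}, exclude relation type $2$ through the degree-two Andr\'e--Quillen piece, and use Morey--Ulrich for the linearly presented case. Your identification $\Ho_1(R,\Rees_R(I),R)_2=E(I)_2=\Q_2/S_1\Q_1=\delta(I)$ is correct. Proving $\delta(I)=0$ outright would be a valid, slightly more direct way to exclude $\rt(X)=2$; the paper instead deduces degree-two vanishing from vanishing in degrees $\ge 3$ via \cite[Remark~2.9]{HSV}.

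\textbf{The gap.} It lies in the one non-formal step, the proof that $\delta(I)=0$, and the mechanism you describe is wrong. The module $\delta(I)$ is not ``$\Ho_2$ of the Koszul complex modulo torsion-type terms.'' Nor does $\Ho_2$ vanish here. For a height-two perfect ideal with $n$ generators, $\Ho_{n-2}$ of the Koszul complex is, up to shift, the canonical module of $R/I$. So when $\mu(I_X)=4$, exactly the case you need, $\Ho_2\neq 0$. As written, your argument excluding relation type $2$ does not go through.

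\textbf{The repair.} It uses the ingredients you list at the end, but applied to $\Ho_1$ rather than $\Ho_2$. By Simis--Vasconcelos there is an exact sequence
\[
0\lar \delta(I)\lar \Ho_1(I)\lar (R/I)^{m}\lar I/I^2\lar 0,
\]
where $\Ho_1(I)$ is the first Koszul homology on $m$ generators of $I$. Thus $\delta(I)$ is a submodule of $\Ho_1(I)$. Since $I_X$ is locally a complete intersection at every prime $\fp\neq\fm$ containing it, and complete intersections are syzygetic, $\delta(I)$ has finite length. On the other hand, height-two perfect ideals are strongly Cohen--Macaulay (Avramov--Herzog). Hence $\Ho_1(I)$ is a Cohen--Macaulay module of dimension $1$; it is nonzero, being $(R/I)_\fp^{m-2}$ at each minimal prime $\fp$. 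It therefore has no nonzero submodule of finite length. So $\delta(I)=0$, that is, $E(I)_2=0$, and then $\rt(X)\le 2$ forces $\rt(X)=1$. With this substitution the rest of your proof, including the Morey--Ulrich step, stands.
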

\begin{proof}
Set $I=I_X$. If $\mu(I)\leq 3$, then $I$ is either a complete intersection or a strict almost complete intersection. Since, as explained above, $I$ is locally a complete intersection on the punctured spectrum, Remark~\ref{rem:ci-linear-type} gives
$\rt_R(I)=1$.

We may therefore assume that $\mu(I)\geq 4$. By
Remark~\ref{rem:not-linear-type-many-generators}, $I$ is not of linear type. Hence $\rt_R(I)\neq 1$. We claim that $\rt_R(I)\neq 2$. Suppose, to the contrary, that
$\rt_R(I)\leq 2$. The Andr\'e--Quillen characterization of relation type recalled above gives
$$\rt_R(I)=\min \{ r\geq 1 \ | \ \Ho_{1}(R,\Rees_{R}(I),R )_n=0\ \hbox{for all }\ n\geq r+1  \}$$ and 
it follows that $\Ho_{1}(R,\Rees_{R}(I),R )_n=0$ for all $n\geq 3$. By~\cite[Remark 2.9]{HSV}, we conclude that   $\Ho_{1}(R,\Rees_{R}(I),R )_2=0$, which implies that $\rt(I)=1$, a contradiction. Thus, $\rt(X)\geq 3$. 

\medskip
 For the second assertion, assume that
$I_X$ is linearly presented and $\mu(I_X)\geq 4$.  As explained above, $I_X$ satisfies $G_3$. Hence the Morey--Ulrich theorem gives $\Q=\mathcal L+I_3(B(\varphi))$, where $B(\varphi)$ is a Jacobian dual matrix; see
\cite[Theorem~1.3]{MU}. Therefore $1<\rt(X)\leq 3$. Since relation type $2$ does not occur, we conclude that $\rt(X)=3$.
\end{proof}

The next examples illustrate Proposition~\ref{prop:points-plane-gap}.

\begin{Example}\label{ex:star-configuration-points}
Let $L_1,\ldots,L_q\in R_1$, with $q\geq 4$, be pairwise non-proportional linear forms
such that no three of the lines $V(L_i)$ are concurrent. Let
\[
X=\{\,V(L_i,L_j)\mid 1\leq i<j\leq q\,\}\subseteq \PP^2_k
\]
be the corresponding star configuration of points. Set
\[
F=L_1\cdots L_q,\qquad M_i=\frac{F}{L_i}\quad\text{for }i=1,\ldots,q.
\]
Then $I_X=(M_1,\ldots,M_q)$, and $I_X$ has a Hilbert--Burch resolution
\[
0\lar R(-q)^{q-1}\lar R(-(q-1))^q\lar I_X\lar 0.
\]
Hence $I_X$ is linearly presented. Since $\mu(I_X)=q\geq 4$,
Proposition~\ref{prop:points-plane-gap} gives $\rt(X)=3$.
\end{Example}

\begin{Example}\label{ex:triangular-number-general-points}
Let $d\geq 3$ and let $X\subseteq \PP^2_k$ be a set of
$s=\binom{d+1}{2}$ distinct points such that $(I_X)_{d-1}=0$. Then $X$ has Hilbert
function
\[
\HF_{R/I_X}(j)=\min\left\{s,\binom{j+2}{2}\right\}.
\]
In particular,
\[
\mathrm{HS}_{R/I_X}(t)=\frac{1-(d+1)t^d+dt^{d+1}}{(1-t)^3}.
\]
Since $I_X$ is saturated of height $2$, it is perfect, and its minimal free resolution is
\[
0\lar R(-(d+1))^d\lar R(-d)^{d+1}\lar I_X\lar 0.
\]
Thus $I_X$ is generated by $d+1$ forms of degree $d$ and has a linear presentation.
Since $d+1\geq 4$, Proposition~\ref{prop:points-plane-gap} gives $\rt(X)=3$.
\end{Example}

We also recall a related asymptotic perspective. If $I_X$ is the defining ideal of a
finite set of points $X\subseteq \PP^2$, H.~T.~H\`a studied the Rees algebras of the ideals
generated by the homogeneous pieces $(I_X)_t$. These Rees algebras describe the graphs
of the rational maps defined by the linear systems $(I_X)_t$, and for $t$ sufficiently
large they give embeddings of the blowup of $\PP^2$ along $X$; see
\cite{Ha-Rees-points}. This viewpoint is different from ours: here we study the
relation type of the full homogeneous ideal $I_X$, not of a single graded piece.

\medskip

Thus, for finite sets of points in $\PP^2_k$ (whose homogeneous ideals are) not of linear type, the 
relation type is always $3$ or greater. When $I_X$ is linearly presented and $\mu(I_X)\geq 4$,
Proposition~\ref{prop:points-plane-gap} gives an effective criterion for $\rt(X)=3$ in terms of
the expected equations of the Rees algebra. The more subtle case is when $I_X$ is not
linearly presented. The goal of the next sections is to shed light on this case 
by focusing on special geometric configurations, beginning with almost collinear configurations.
\section{Relation Types of $(s-r)$-fold Collinear Configurations}
\label{sec:s-r-fold}

In this section we study finite sets of points in $\PP^2_k$ with a distinguished collinear subset. These configurations were introduced in \cite{AZ}. They provide a natural class beyond the linear-type cases considered above, and 
in particular, they serve as a testing ground for possible relation types $>1$. The results obtained here will be used later in the study of finite sets of points of small cardinality.

\begin{Definition}\label{def:s-r-fold-collinear}
Let $0\leq r\leq s-3$. We say that $X$ is an $(s-r)$-\emph{fold collinear configuration}
if there exists a line $L\subseteq \PP^2_k$ containing exactly $s-r$ points of $X$.
Writing
$Y:=X\cap L$, $Z:=X\setminus L$,
we have
$X=Y\cup Z$, $|Y|=s-r$, and $|Z|=r$.
We call $L$ a {\it distinguished line} of the configuration. 
\end{Definition}

The condition $r\leq s-3$ means that the distinguished line contains at least three
points. Thus the configuration has a genuine collinear component, while the residual
set $Z$ measures how far $X$ is from being collinear. By
Proposition~\ref{prop:projective-invariance}, relation type is unchanged under
projective linear changes of coordinates. Therefore, after a projective transformation,
we may assume that the distinguished line is $L=V(\ell)$, and in practice we shall often
take $\ell=z$. We write $a:=s-r=|Y|$. 

\medskip
The subset $Y\subseteq L$ defines a reduced effective divisor on $L=V(\ell)$. 
To say that the image of $F \in |I_Y|_a$ modulo $\ell$ cuts out the 
divisor $Y$ on $L$ means
that the zero scheme of $\overline F\in R/(\ell)$ on $L$ is exactly $Y$, and that
$I_Y=(\ell,F)$.

\begin{Proposition}\label{prop:sr-conic-characterization}
Let $X=Y\cup Z$ be an $(s-r)$-fold collinear configuration, where
$Y\subseteq L$ and $|Y|=s-r\geq 3$. Then $X$ is contained in a conic if and only if
the residual set $Z$ is collinear. 
\end{Proposition}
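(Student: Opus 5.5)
The plan is to use Bézout's theorem in the forward direction and an explicit reducible conic in the reverse direction. Throughout, write $L=V(\ell)$ and $|Y|=s-r\geq 3$.

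For the implication ($\Leftarrow$), suppose $Z$ is collinear. If $Z=\emptyset$ or $|Z|=1$, choose any line $M$ through the points of $Z$ (an arbitrary line if $Z$ is empty). If $|Z|\geq 2$, let $M=V(m)$ be the unique line containing $Z$. Then $X=Y\cup Z\subseteq L\cup M=V(\ell m)$. Since $\ell m$ is a nonzero form of degree $2$, the set $X$ lies on a (possibly reducible) conic.

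For the implication ($\Rightarrow$), suppose $X\subseteq V(Q)$ with $Q\in R_2$ nonzero. The key step is to show that $\ell$ divides $Q$.
\begin{itemize}
\item The conic $V(Q)$ meets $L$ in at least $|Y|\geq 3$ distinct points.
\item If $\ell\nmid Q$, then $\overline{Q}\in R/(\ell)\cong k[u,v]$ is a nonzero binary quadratic form, so it has at most $2$ zeros on $L$. This is just Bézout's theorem for a line and a conic, and it is a contradiction.
\item Hence $Q=\ell m$ for some nonzero linear form $m$.
\end{itemize}
Every point $p\in Z$ lies off $L$, so $\ell(p)\neq 0$. Since $Q(p)=\ell(p)m(p)=0$, we get $m(p)=0$. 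Therefore $Z\subseteq V(m)$, and $Z$ is collinear.

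The only real content is the divisibility $\ell\mid Q$, which follows from restricting $Q$ to $L$ and counting roots of a binary quadratic. No step should be a serious obstacle. The one point to check is that the convention "collinear" covers $|Z|\leq 1$ (and $r=0$), since a line through such a $Z$ always exists.
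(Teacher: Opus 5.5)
Your proposal is correct and follows essentially the same route as the paper: the reducible conic $L\cup M$ for one direction, and for the other, Bézout (your root count of the restricted binary quadratic) to force $\ell\mid Q$, so that $Z$, being disjoint from $L$, lies on the residual line. Your explicit handling of $|Z|\leq 1$ and of the factorization $Q=\ell m$ (which also covers $Q=\ell^2$) is a harmless refinement of the paper's argument.
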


\begin{proof}
If $Z$ is collinear, say $Z\subseteq M$ for some line $M$, then $X=Y\cup Z\subseteq L\cup M$, and $L\cup M$ is a conic. 
Conversely, suppose that $X$ is contained in a conic $C$. Since $Y\subseteq L$ contains
at least three distinct points, the line $L$ meets $C$ in at least three distinct points.
Bézout's theorem now implies that
$L$ is a component of $C$; so $C=L\cup M$ for some line $M$. The fact that $Z\cap L=\emptyset$ forces $Z\subseteq M$. Thus $Z$ is
collinear.
\end{proof}

\begin{Corollary}\label{cor:sr-conic-linear-type}
Let $X=Y\cup Z$ be an $(s-r)$-fold collinear configuration. If the residual set $Z$ is
collinear, then $\rt(X)=1$. 
\end{Corollary}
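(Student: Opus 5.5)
This follows immediately by combining Proposition~\ref{prop:sr-conic-characterization} with Proposition~\ref{prop:points-on-conic}.

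First, I will use the hypothesis that $Z$ is collinear, say $Z\subseteq M$ for a line $M$. Then $X=Y\cup Z\subseteq L\cup M$, which is the easy direction of Proposition~\ref{prop:sr-conic-characterization}. This inclusion needs no Bézout argument, and it holds even when $Z$ is empty or a single point, since such a $Z$ trivially lies on some line. The union $L\cup M$ is the zero locus of the quadric $\ell m$, where $M=V(m)$; it is a (possibly degenerate, reducible) conic. If $M=L$ happens to occur in degenerate cases, then $X$ is collinear and Example~\ref{ex:collinear-points} applies directly. Otherwise $L\cup M$ is a conic.

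Second, I will invoke Proposition~\ref{prop:points-on-conic}. Since $X$ lies on a curve of degree $2$, Burch's theorem gives $\mu(I_X)\le 3$. Then $I_X$ is a complete intersection or a strict almost complete intersection, and it is locally a complete intersection on the punctured spectrum. Remark~\ref{rem:ci-linear-type} therefore yields $\rt(X)=1$.

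There is no real obstacle here. The only point to watch is that Proposition~\ref{prop:points-on-conic} applies to reducible conics, which it does, since Burch's bound only uses containment in some degree-$2$ curve.
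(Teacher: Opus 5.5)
Your proposal is correct and follows the paper's proof: the easy direction of Proposition~\ref{prop:sr-conic-characterization} puts $X$ on the reducible conic $L\cup M$, and Proposition~\ref{prop:points-on-conic} then gives $\rt(X)=1$. Your extra remark about $M=L$ is harmless but unnecessary. Since $Z=X\setminus L$, that case only occurs when $Z=\emptyset$, and even then $V(\ell^2)$ is a degree-$2$ curve to which Burch's bound applies.
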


\begin{proof}
By Proposition~\ref{prop:sr-conic-characterization}, the set $X$ is contained in a
conic. Hence Proposition~\ref{prop:points-on-conic} gives $\rt(X)=1$.
\end{proof}

\begin{Lemma}\label{lem:bdl-s-r}
Let $X=Y\cup Z$ be an $(s-r)$-fold collinear configuration, and set
$a:=|Y|=s-r$. Assume that there is a form $F\in (I_Z)_a$ whose image modulo $\ell$
cuts out the reduced divisor $Y$ on $L=V(\ell)$. Then
\[
I_X=\ell I_Z+(F).
\]
\end{Lemma}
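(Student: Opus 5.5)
We prove the two inclusions separately: the easy containment $\ell I_Z+(F)\subseteq I_X$ pointwise, and the reverse containment degree by degree, by comparing Hilbert functions via an exact sequence of basic double linkage.

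For the containment, note that $\ell$ vanishes on $Y\subseteq L$, so $\ell I_Z$ vanishes on $Y\cup Z=X$. Likewise $F\in (I_Z)_a$ vanishes on $Z$. Since $\overline F$ cuts out $Y$ on $L$, $F$ also vanishes on $Y$. Hence $J:=\ell I_Z+(F)\subseteq I_X$.

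For the reverse inclusion, take a homogeneous $G\in (I_X)_j$.
\begin{itemize}
\item Reduce modulo $\ell$. The image $\overline G\in R/(\ell)$ vanishes on $Y$. The ring $R/(\ell)\cong k[u,v]$ is a polynomial ring in two variables, and $\overline F$ is a reduced form of degree $a$ whose zero scheme is exactly $Y$. Therefore $\overline F$ generates the ideal of $Y$ in $k[u,v]$, and $\overline G=\overline H\,\overline F$ for some form $H$ of degree $j-a$.
\item Write $G-HF=\ell G'$ with $G'$ homogeneous of degree $j-1$. Here the hypothesis $I_Y=(\ell,F)$ stated before the lemma is exactly what is used.
\item Show $G'\in I_Z$. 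Both $G$ and $HF$ vanish on $Z$, so $\ell G'$ vanishes on $Z$. Since $Z\cap L=\emptyset$, $\ell$ does not vanish at any point of $Z$. So $G'$ vanishes on $Z$, and $G'\in I_Z$ because $I_Z$ is the (saturated, radical) ideal of all forms vanishing on $Z$.
\end{itemize}
Thus $G=HF+\ell G'\in (F)+\ell I_Z$.

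This argument is essentially complete and elementary, and I expect no real obstacle. The only point needing care is the unique-factorization step in $k[u,v]$: the reducedness of $Y$ and the fact that $\overline F$ has exactly the degree $a=|Y|$ with zero scheme $Y$ ensure that $\overline F$ is, up to a scalar, the product of the $a$ distinct linear forms defining the points of $Y$. Any form vanishing at those $a$ distinct points is therefore divisible by $\overline F$.

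As a cross-check one could verify $\HF_{R/J}(j)=\HF_{R/I_X}(j)$ using the exact sequence $0\to I_Z(-1)\to I_Z(-1)\oplus R(-a)\to J\to 0$ for basic double links. That sequence requires $F$ to be a nonzerodivisor on $R/I_Z$, which holds because $F\in I_Z$ and $\ell$ is a nonzerodivisor. The direct argument above makes this check unnecessary.
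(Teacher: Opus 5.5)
Your argument is correct and is essentially the paper's proof. The paper writes $I_X=(\ell,F)\cap I_Z$ using $I_Y=(\ell,F)$, expresses an element as $\ell A+BF$, and concludes $A\in I_Z$ because $\ell$ is a nonzerodivisor on $R/I_Z$. This is exactly your decomposition $G=HF+\ell G'$ followed by your pointwise argument, except that you also justify $I_Y=(\ell,F)$ via factorization in $R/(\ell)$.

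One caution about your unneeded cross-check. The basic double link sequence is $0\to R(-a-1)\to I_Z(-1)\oplus R(-a)\to J\to 0$, with $1\mapsto(F,-\ell)$. What it requires is $F\notin(\ell)$. It does not require $F$ to be a nonzerodivisor on $R/I_Z$, which is false since $F\in I_Z$.
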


\begin{proof}
Since $F$ cuts out $Y$ on $L$, we have $I_Y=(\ell,F)$. Hence
$I_X=I_Y\cap I_Z=(\ell,F)\cap I_Z$. 
The inclusion $\ell I_Z+(F)\subseteq (\ell,F)\cap I_Z$ is clear, because
$F\in I_Z$. Conversely, let $H\in(\ell,F)\cap I_Z$. Write $H=\ell A+BF$. Since
$H,F\in I_Z$, we get $\ell A\in I_Z$. As $Z\cap L=\emptyset$, the form $\ell$ is a
nonzerodivisor on $R/I_Z$, so $A\in I_Z$. Thus $H\in \ell I_Z+(F)$, proving the reverse
inclusion.
\end{proof}

We shall repeatedly use the following resolution attached to the decomposition above.

\begin{Proposition}\label{prop:bdl-resolution-s-r}
Let $X=Y\cup Z$ be as in Lemma~\ref{lem:bdl-s-r}, with distinguished line
$L=V(\ell)$ and $a=|Y|$. Assume that $Z\neq\emptyset$, and let $I_Z=(G_1,\ldots,G_q)$ be a minimal homogeneous generating set, with $\deg G_i=d_i$. Let
\[
0\lar \bigoplus_{j=1}^{q-1} R(-b_j)
\stackrel{\varphi}{\lar}
\bigoplus_{i=1}^{q} R(-d_i)
\lar I_Z
\lar 0
\]
be the Hilbert--Burch resolution of $I_Z$. Suppose that $F\in (I_Z)_a$ cuts out $Y$
modulo $\ell$, and write
\[
F=A_1G_1+\cdots+A_qG_q,
\]
where each $A_i$ is either zero or homogeneous of degree $a-d_i$. Set $A={}^{t}(A_1,\ldots,A_q)$. Then $I_X$ has a graded free resolution
\[
0\lar
\left(\bigoplus_{j=1}^{q-1}R(-b_j-1)\right)\oplus R(-a-1)
\stackrel{\Phi}{\lar}
\left(\bigoplus_{i=1}^{q}R(-d_i-1)\right)\oplus R(-a)
\lar I_X
\lar 0
\]
where
\[
\Phi=
\left(
\begin{array}{c|c}
\varphi & A\\ \hline
0\ \cdots\ 0 & -\ell
\end{array}
\right).
\]
\end{Proposition}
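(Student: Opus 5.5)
We verify directly that the displayed sequence is a complex, that $\Phi$ is injective, and that the cokernel of $\Phi$ is isomorphic to $I_X$ via the map $\epsilon=(\ell G_1,\ldots,\ell G_q,F)$. By Lemma~\ref{lem:bdl-s-r}, $I_X=\ell I_Z+(F)$, so $\epsilon$ is surjective onto $I_X$. The twists are consistent: $\ell G_i$ has degree $d_i+1$, $F$ has degree $a$, the $j$-th column of $\varphi$ viewed in the first block has degree $b_j+1$ after the shift, and the last column $(A,-\ell)$ is homogeneous of degree $a+1$ because $\deg A_i=a-d_i$.

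First, $\epsilon\circ\Phi=0$. For the first $q-1$ columns, this is $\ell\cdot(G_1,\ldots,G_q)\varphi=0$. For the last column, it is $\ell\sum A_iG_i-\ell F=0$. Second, $\Phi$ is injective. It is block upper triangular, $\varphi$ is injective (Hilbert--Burch), and $\ell\neq 0$. Hence if $\Phi(u,c)=0$, the last row gives $c\ell=0$, so $c=0$, and then $\varphi u=0$ forces $u=0$.

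The main step is exactness in the middle: $\ker\epsilon\subseteq\operatorname{Im}\Phi$. Suppose $(v,c)$ with $\ell\sum v_iG_i+cF=0$. Reducing modulo $\ell$ gives $c\overline F=0$ in $R/(\ell)$. Since $\overline F\neq 0$ (it cuts out a finite divisor on $L$) and $R/(\ell)$ is a domain, we get $\ell\mid c$. Write $c=-\ell c'$. Subtracting $c'$ times the last column of $\Phi$, we reduce to $(v-c'A,0)$, which satisfies $\ell\sum(v_i-c'A_i)G_i=0$. Since $R$ is a domain, this gives $\sum(v_i-c'A_i)G_i=0$, so $v-c'A\in\operatorname{Im}\varphi$ by exactness of the Hilbert--Burch resolution of $I_Z$. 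Thus $(v,c)\in\operatorname{Im}\Phi$.

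I expect no real obstacle; the only point needing care is the divisibility $\ell\mid c$. If one prefers a more structural route, one could instead check that the $q\times q$ minors of $\Phi$ are $\pm\ell$ times the minors of $\varphi$ together with the expansion giving $F$, and apply the Hilbert--Burch/Buchsbaum--Eisenbud criterion: the ideal of maximal minors is $\ell I_Z+(F)$ up to sign, which has grade $2$. The direct diagram chase above seems cleaner.
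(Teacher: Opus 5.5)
Your proposal is correct and follows essentially the same argument as the paper: you get surjectivity from Lemma~\ref{lem:bdl-s-r}, reduce modulo $\ell$ to obtain $\ell\mid c$ (using that $R/(\ell)$ is a domain and $\overline F\neq 0$), subtract a multiple of the last column, and invoke exactness of the Hilbert--Burch resolution of $I_Z$. Your explicit check that $\Phi$ is injective, via the last row and the injectivity of $\varphi$, supplies a small step that the paper leaves implicit.
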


\begin{proof}
By Lemma~\ref{lem:bdl-s-r},
$I_X=\ell I_Z+(F)=(\ell G_1,\ldots,\ell G_q,F)$.
Let
\[
\psi:
\left(\bigoplus_{i=1}^{q}R(-d_i-1)\right)\oplus R(-a)
\lar I_X
\]
be the map sending the standard basis to 
$\ell G_1,\ldots,\ell G_q,F$.
The first $q-1$ columns of $\Phi$ are the syzygies of $G_1,\ldots,G_q$, now multiplied
by $\ell$, and the last column gives the relation
\[
A_1(\ell G_1)+\cdots+A_q(\ell G_q)-\ell F=0.
\]
Thus $\operatorname{im}\Phi\subseteq \ker\psi$.
Conversely, suppose that 
$\sum_{i=1}^{q}U_i\ell G_i+VF=0$. Reducing modulo $\ell$, we obtain 
$\overline V\,\overline F=0$ in $R/\ell$. 
Since $R/(\ell)$ is a domain and $\overline F\neq 0$, we have $V\in(\ell)$, say
$V=-\ell W$. Then
$\ell\left(\sum_{i=1}^{q}U_iG_i-WF\right)=0$.
Since $R$ is a domain,
$\sum_{i=1}^{q}(U_i-WA_i)G_i=0$. 
Therefore
$(U_1-WA_1,\ldots,U_q-WA_q)^t$
is a syzygy of $G_1,\ldots,G_q$, and hence lies in the image of $\varphi$. It follows that
$(U_1,\ldots,U_q,V)^t$ lies in the image of $\Phi$. Thus $\ker\psi=\operatorname{im}\Phi$, and the displayed
complex is exact.
\end{proof}

The resolution in Proposition~\ref{prop:bdl-resolution-s-r} is the standard basic double-link resolution of $I_X$. Since the Hilbert--Burch resolution of $I_Z$ is minimal, cancellations may only arise from scalar entries among the coefficients $A_i$. In particular, the displayed resolution is minimal whenever the generators $\ell G_1,\ldots,\ell G_q,F$ are minimal and every nonzero $A_i$ has positive degree.
We now dispose of those cases in which the residual block is too small to produce nonlinear relation types.

\begin{Proposition}\label{prop:r012}
Let $X\subseteq \PP^2_k$ be an $(s-r)$-fold collinear configuration. If $r=0,1,2$, then $\rt(X)=1$. 
\end{Proposition}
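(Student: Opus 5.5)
The plan is to reduce each case to Corollary~\ref{cor:sr-conic-linear-type}, or more directly to Proposition~\ref{prop:points-on-conic}, by showing that $X$ lies on a conic.

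Write $X=Y\cup Z$ with $Y=X\cap L$ and $|Z|=r$. The key observation is that for $r\leq 2$ the residual set $Z$ is automatically collinear:
\begin{itemize}
\item if $r=0$, then $Z=\emptyset$ and $X=Y\subseteq L$;
\item if $r=1$, then $Z$ is a single point, which lies on some line (in fact on infinitely many);
\item if $r=2$, then the two distinct points of $Z$ span a unique line $M$.
\end{itemize}
In every case $Z\subseteq M$ for some line $M$, so $X\subseteq L\cup M$, a (possibly degenerate) conic. For $r=0$ one can also invoke Example~\ref{ex:collinear-points} directly: $X$ is collinear, so $I_X$ is a complete intersection.

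The conclusion then follows from Corollary~\ref{cor:sr-conic-linear-type}, which rests on Proposition~\ref{prop:sr-conic-characterization} and Proposition~\ref{prop:points-on-conic}. Concretely, Burch's bound gives $\mu(I_X)\leq 3$. Because $I_X$ has height $2$, it is either a complete intersection or a strict almost complete intersection. It is also locally a complete intersection on the punctured spectrum, as shown before Proposition~\ref{prop:points-plane-gap}. Remark~\ref{rem:ci-linear-type} therefore yields $\rt(X)=1$.

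There is essentially no obstacle. The only point needing care is the degenerate case $r=0$: there Corollary~\ref{cor:sr-conic-linear-type} applies vacuously, since the empty set is collinear, but it is cleaner to cite Example~\ref{ex:collinear-points}. Note also that we do not need the basic double-link resolution of Proposition~\ref{prop:bdl-resolution-s-r} here.
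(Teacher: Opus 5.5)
Your proof is correct and follows the paper's own argument essentially verbatim: for $r=0$ the paper cites Example~\ref{ex:collinear-points}, and for $r=1,2$ it observes that $Z$ is collinear and applies Corollary~\ref{cor:sr-conic-linear-type}. Your unpacking of that corollary through Burch's bound and Remark~\ref{rem:ci-linear-type} is just the chain of results the corollary already rests on.
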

\begin{proof}
If $r=0$, then all points of $X$ are collinear, and the result follows from Example~\ref{ex:collinear-points}. If $r=1$ or $r=2$, then the residual set $Z$ is collinear, with the convention that one or two points are collinear. Hence
Corollary~\ref{cor:sr-conic-linear-type} gives $\rt(X)=1$.
\end{proof}

The same observation gives a useful class of configurations of linear type.

\begin{Proposition}\label{prop:two-collinear-blocks}
Let $X\subseteq \PP^2_k$ be a finite set of points contained in the union of two distinct lines. Then $\rt(X)=1$. In particular, if $X$ is the union of two reduced collinear blocks on two distinct lines, then $\rt(X)=1$.
\end{Proposition}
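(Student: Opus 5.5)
The plan is to reduce to Proposition~\ref{prop:points-on-conic}. Suppose $X\subseteq L_1\cup L_2$, where $L_1=V(\ell_1)$ and $L_2=V(\ell_2)$ are distinct lines. Then the product $\ell_1\ell_2$ is a nonzero quadratic form. It vanishes at every point of $X$, so $\ell_1\ell_2\in (I_X)_2$. Consequently $X$ lies on the (reducible, reduced) conic $V(\ell_1\ell_2)=L_1\cup L_2$.

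Proposition~\ref{prop:points-on-conic} places no irreducibility hypothesis on the conic. Its proof only uses Burch's theorem to get $\mu(I_X)\leq 3$, and then Remark~\ref{rem:ci-linear-type}. So it applies verbatim and gives $\rt(X)=1$.

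For the ``in particular'' clause, suppose $X=X_1\cup X_2$ with $X_i$ a reduced collinear block on $L_i$. Then $X\subseteq L_1\cup L_2$ trivially. This holds whether or not the point $L_1\cap L_2$ belongs to $X$, and even when one block is a single point, so the first assertion applies. Degenerate cases are harmless: if $X$ is actually collinear, Example~\ref{ex:collinear-points} also covers it.

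There is no real obstacle. The only point to check is that Burch's bound, in the form of \cite[Corollary~3.8]{EisSyz}, is valid for an arbitrary plane curve of degree $d$, reducible curves included. It is, since it only requires a nonzero form of degree $d$ in $I_X$: with $I_X=I_2(\varphi)$ for a $(q+1)\times q$ Hilbert--Burch matrix $\varphi$, the minimal generators have degree at least $q$, which forces $q\leq 2$.
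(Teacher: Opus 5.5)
Your proof is correct and follows the paper's argument: $X$ lies on the reducible conic $V(\ell_1\ell_2)$, and Proposition~\ref{prop:points-on-conic} applies. One small slip in your side justification of Burch's bound: the ideal of maximal minors of a $(q+1)\times q$ Hilbert--Burch matrix is $I_q(\varphi)$, not $I_2(\varphi)$; your degree argument giving $q\leq 2$, and hence $\mu(I_X)\leq 3$, is otherwise fine.
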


\begin{proof}
Since $X$ is contained in the union of two distinct lines, it is contained in a reducible conic. Therefore Proposition~\ref{prop:points-on-conic} gives $\rt(X)=1$.
\end{proof}
The following lemma isolates the determinant-and-linkage argument arising from the Jacobian dual identity and used below.
\begin{Lemma}\label{lem:jacobian-dual-linkage}
Let $S=R[T_1,T_2,T_3,T_4]$, where $R=k[x,y,z]$, and let $\fm=(x,y,z)$. Let
$\mathcal L=(L_1,L_2,L_3)\subseteq S$ be a complete intersection such that each $L_i$
has $T$-degree $1$ and $\mathcal L\subseteq \fm S$. Let $B$ be a $3\times 3$ matrix, 
with homogeneous entries in $S$ of $T$-degree $1$, for which
\[
[x\ y\ z]B=[L_1\ L_2\ L_3].
\]
Set $D:=\det B$ and $J:=(\mathcal L,D)$. If the image $\overline D$ of $D$ in
$S/\fm S$ is nonzero, then $\mathcal L:\fm S=J$. If, in addition,
$\mathcal L:J=\fm S$, then $S/J$ is Cohen--Macaulay.
\end{Lemma}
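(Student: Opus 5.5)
This is a linkage argument. Since $\mathcal L=(L_1,L_2,L_3)$ is a complete intersection contained in the complete intersection $\fm S=(x,y,z)$, and
\[
[L_1\ L_2\ L_3]=[x\ y\ z]B,
\]
the classical determinantal description of linkage for complete intersections applies. Namely, for a regular sequence $L_1,L_2,L_3$ inside an ideal generated by the regular sequence $x,y,z$ of the same length,
\[
\mathcal L:(x,y,z)S=\mathcal L+(\det B).
\]
The plan has three steps.

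\emph{Step 1: the easy containment.} By Cramer's rule, $\operatorname{adj}(B)$ gives
\[
D\cdot[x\ y\ z]=[L_1\ L_2\ L_3]\operatorname{adj}(B),
\]
so $D\fm S\subseteq\mathcal L$. Hence $J\subseteq \mathcal L:\fm S$.

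\emph{Step 2: the reverse containment.} I would invoke the standard theorem of linkage of complete intersections (for instance via the comparison map between the Koszul complexes $K(L;S)\to K(x,y,z;S)$ induced by $B$, whose top degree component is multiplication by $D$). This gives $\mathcal L:\fm S=\mathcal L+(D)$ whenever $\mathcal L\subsetneq \fm S$ and both are generated by regular sequences of length $3$. Here $\fm S$ is a complete intersection of height $3$, and $\mathcal L$ is one by hypothesis. The hypothesis $\overline D\neq0$ in $S/\fm S$ is used to guarantee $D\notin \fm S$. Thus $\mathcal L\neq\fm S$ and the link is proper: $\mathcal L:\fm S$ is not all of $S$ and $J$ is a genuine link. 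Concretely, the mapping cone of the Koszul comparison, dualized, resolves $S/(\mathcal L:\fm S)$, and its last map identifies the extra generator with $D$.

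\emph{Step 3: Cohen--Macaulayness.} With the extra hypothesis $\mathcal L:J=\fm S$, the ideals $J$ and $\fm S$ are directly linked through the complete intersection $\mathcal L$. Since $S/\fm S\cong k[T_1,\dots,T_4]$ is Cohen--Macaulay, and linkage via a Gorenstein (here complete intersection) ideal preserves the Cohen--Macaulay property (Peskine--Szpiro), $S/J$ is Cohen--Macaulay. Alternatively, the mapping cone from Step 2 gives a free resolution of $S/J$ of length $3=\hht J$, so $S/J$ is perfect.

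The main obstacle is Step 2, specifically making sure the general linkage theorem is applicable in the polynomial ring $S$, which is neither local nor assumed to be graded uniformly. I would handle this using the bigrading, so that graded Nakayama and the grade arguments work. I would also check that $\mathcal L:\fm S$ is unmixed of height $3$, with $\fm S$ prime so that the link is geometric.
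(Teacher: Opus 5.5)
Your proposal is correct. Steps 1 and 3 are the paper's: Cramer's rule via $\operatorname{adj}(B)$, then transfer of the Cohen--Macaulay property along the direct link through $\mathcal L$, which the paper cites as \cite[Theorem~21.23]{EisCA}. Step 2 is argued differently.

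The paper does not quote the linkage theorem for complete intersections. With $A=S/\mathcal L$ and $\fn=\fm A$, it computes
$(\mathcal L:\fm S)/\mathcal L=\operatorname{Hom}_A(A/\fn,A)\simeq\operatorname{Ext}^3_S(S/\fm S,S)(-3)\simeq(S/\fm S)(-3)$,
using \cite[Lemma~1.2.4]{BH} and the Koszul complex on $x,y,z$. So the colon module is free cyclic over $k[T_1,\ldots,T_4]$, with a one-dimensional bottom piece in $T$-degree $3$. Since $D$ has $T$-degree $3$, and $\overline D\neq 0$ forces $D\notin\mathcal L$, the class of $D$ must generate. You instead identify $D$ canonically as the top component of the Koszul comparison map.

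The paper's version is short and self-contained given one lemma, but it relies on the $T$-grading and on $\overline D\neq 0$. Yours holds in any Noetherian ring for two regular sequences of the same length with $(L_1,L_2,L_3)\subseteq(x,y,z)$. So the bigrading device in your last paragraph is unnecessary; it would in any case need the $L_i$ to be bihomogeneous, which the statement does not assume. Your route also shows that $\overline D\neq0$ is superfluous for $\mathcal L:\fm S=J$. Moreover, since $\fm S$ is prime and $S/\fm S$ is Cohen--Macaulay, Peskine--Szpiro linkage gives both $\mathcal L:J=\fm S$ and the Cohen--Macaulayness of $S/J$ without the lemma's extra hypothesis. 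Your mapping-cone resolution of length $3$ gives the latter as well.

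One correction: the role you assign to $\overline D\neq0$ is a non sequitur. $D\notin\fm S$ does not imply $\mathcal L\neq\fm S$: if $\mathcal L=\fm S$, then $\det B$ would be invertible modulo $\fm S$, so again $D\notin\fm S$. The inequality $\mathcal L\neq\fm S$ is immediate anyway, because $\mathcal L$ has no nonzero elements of $T$-degree $0$. The classical colon formula also does not need the link to be proper.
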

\begin{proof}
Multiplying $[x\ y\ z]B=[L_1\ L_2\ L_3]$ by $\operatorname{adj}(B)$ gives
$[x\ y\ z]D=[L_1\ L_2\ L_3]\operatorname{adj}(B)$. Hence
$xD,yD,zD\in\mathcal L$, and so $J\subseteq \mathcal L:\fm S$.
Set $A:=S/\mathcal L$ and $\fn:=\fm A$. Then
\[
(\mathcal L:\fm S)/\mathcal L=0:_A\fn=\operatorname{Hom}_A(A/\fn,A).
\]
Since $\mathcal L\subseteq \fm S$, we have $A/\fn\simeq S/\fm S$. By
\cite[Lemma~1.2.4]{BH},
\[
\operatorname{Hom}_A(A/\fn,A)\simeq \operatorname{Ext}^3_S(S/\fm S,S)(-3).
\]
The Koszul resolution associated with the $S$-regular sequence $x,y,z$ gives
$\operatorname{Ext}^3_S(S/\fm S,S)\simeq S/\fm S$, with no additional shift in $T$-degree.
Therefore $(\mathcal L:\fm S)/\mathcal L\simeq (S/\fm S)(-3)$.

\medskip
Thus $(\mathcal L:\fm S)/\mathcal L$ is a free cyclic $S/\fm S$-module generated in
$T$-degree $3$. Since $D$ has $T$-degree $3$ and $\overline D\neq 0$, the class of $D$
is a nonzero element in the first nonzero graded piece; so it generates this cyclic
module. Therefore $\mathcal L:\fm S=(\mathcal L,D)=J$.

\medskip
Finally, assume that $\mathcal L:J=\fm S$. Then $J$ and $\fm S$ are directly linked by
the complete intersection $\mathcal L$. Since
$S/\fm S\simeq k[T_1,T_2,T_3,T_4]$ is Cohen--Macaulay, linkage by the complete
intersection $\mathcal L$ yields that $S/J$ is Cohen--Macaulay; see
\cite[Theorem~21.23]{EisCA}.
\end{proof}
We now study $(s-r)$-fold collinear configurations whose residual set is itself almost
collinear. The next theorem determines exactly when such a configuration has relation
type one and when it has relation type three.

\begin{Theorem}\label{thm:rt-residual-r-1-fold}
Let $X=Y\cup Z$ be an $(s-r)$-fold collinear configuration with $r\geq 4$, where
$|Y|=a:=s-r$, $|Z|=r$, and $Y$ lies on the distinguished line $L$. Assume that $Z$ is
itself an $(r-1)$-fold collinear configuration, that is, $r-1$ points of $Z$ lie on a
line $M$ and one point of $Z$ lies off $M$. Assume also that $a\geq r-1$, and set
$P:=L\cap M$. Then the following hold:
\begin{enumerate}
\item[\rm (i)] if $a=r-1$ and $P\notin Y$, then $\rt(X)=1$;
\item[\rm (ii)] in all other cases, $\rt(X)=3$.
\end{enumerate}
\end{Theorem}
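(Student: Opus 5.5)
We would place coordinates so that $L=V(z)$ and $M=V(y)$, and let $Q$ be the point of $Z$ off $M$, with $I_Q=(u,v)$ for linear forms $u,v$. We then apply Lemma~\ref{lem:bdl-s-r} and Proposition~\ref{prop:bdl-resolution-s-r} twice.

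\emph{Resolution of $I_Z$.} Since $Z$ is itself $(r-1)$-fold collinear with a single residual point, we get $I_Z=y I_Q+(G)=(yu,yv,G)$. Here $G\in (I_Q)_{r-1}$ cuts out the $r-1$ points of $Z\cap M$ modulo $y$. The Hilbert--Burch matrix has columns $(v,-u,0)^t$ and $(A_1,A_2,-y)^t$, where $G=A_1u+A_2v$ with $\deg A_i=r-2$.

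\emph{Resolution of $I_X$.} Next we choose $F\in (I_Z)_a$ cutting out $Y$ on $L$, and write $F=B_1\,yu+B_2\,yv+B_3\,G$ with $\deg B_3=a-r+1$. This gives
\[
I_X=(zyu,\ zyv,\ zG,\ F),
\]
with a $4\times 3$ Hilbert--Burch matrix. Its columns are $(v,-u,0,0)^t$, $(A_1,A_2,-y,0)^t$ and $(B_1,B_2,B_3,-z)^t$.

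\emph{Case \textup{(i)}.} Assume $a=r-1$ and $P\notin Y$. Then $B_3$ is a scalar, and the point is to show it can be taken nonzero. The restriction map $(I_Z)_{r-1}\to H^0(\OO_L(r-1))$ has image $\{\,\overline{c\,G+y\,H}\,\}$ with $H\in (I_Q)_{r-2}$. If $B_3=0$, then $\overline F$ vanishes at $P=L\cap M$. Hence whenever $P\notin Y$ we are forced to have $B_3\neq0$. Then $zG\in(zyu,zyv,F)$, so $\mu(I_X)=3$ and Remark~\ref{rem:ci-linear-type} gives $\rt(X)=1$.

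\emph{Case \textup{(ii)}, lower bound.} In the remaining cases, either $a>r-1$ (so $\deg B_3>0$), or $a=r-1$ with $P\in Y$. In the latter case we will check that $B_3=0$ is forced and that $zG$ is not in the ideal generated by the other three generators. Either way $\mu(I_X)=4$, and Proposition~\ref{prop:points-plane-gap} gives $\rt(X)\geq 3$.

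\emph{Case \textup{(ii)}, upper bound.} For $\rt(X)\le 3$ we plan to use Lemma~\ref{lem:jacobian-dual-linkage}.
\begin{enumerate}
\item The three linear Rees equations $\mathcal L=(L_1,L_2,L_3)$, coming from the columns above, form a complete intersection. This should follow from height considerations, using the $G_3$ property of $I_X$, which gives $\hht\mathcal L=3$.
\item Write $[x\ y\ z]B=[L_1\ L_2\ L_3]$, taking care to split each higher-degree entry so that $B$ has entries of $T$-degree $1$, and show that $\overline{\det B}\neq 0$ in $k[T_1,\dots,T_4]$. The lemma then gives $\mathcal L:\fm S=J=(\mathcal L,D)$.
\item Verify $\mathcal L:J=\fm S$. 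The lemma then makes $S/J$ Cohen--Macaulay.
\item Deduce that $\fm S$ is not associated to $J$, since $\dim S/\fm S=4<\dim S/J$ and $S/J$ is unmixed. Hence $J:\fm^\infty=J$, and therefore $\Q=\mathcal L:\fm^\infty=J$, which is generated in $T$-degrees $\le 3$.
\end{enumerate}

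\emph{Main obstacle.} We expect step 2, the nonvanishing of $\overline{\det B}$, to be the hard part. Modulo $\fm$, only the entries of $\Phi$ that are linear in $x,y,z$ contribute, and the entries $A_i,B_i$ may have higher degree, so the reduction can degenerate. Our first attempt is to exploit the freedom in choosing $B$: we expand each $A_i,B_i$ so that one factor of $x$, $y$ or $z$ is peeled off into $[x\ y\ z]$ and the remaining factor goes into $B$. We would then compute $\overline{\det B}$ directly, as a cubic in the $T_i$ involving $\overline v,\overline u$ and the leading terms. We would split into the subcases $a>r-1$ versus $a=r-1$ with $P\in Y$, and use the fact that $Q\notin L\cup M$ to guarantee a nonzero monomial.

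If $\overline D$ vanishes in some subcase, the fallback is to identify the special fiber instead. Since $\mathcal F(I_X)$ is a hypersurface in $k[T_1,\dots,T_4]$, we would show that its degree is $3$ via the degree formula for the column degrees. We would then bound the torsion of $\operatorname{Sym}(I_X)$ by a direct saturation computation.
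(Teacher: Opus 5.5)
Your route is the paper's own. You use the same double basic-link presentation $I_Z=(yu,yv,G)$ and $I_X=(zyu,zyv,zG,F)$; the paper takes $u=\lambda_1$, $v=\lambda_2$, $G=H=\prod\lambda_i$, so your $A_1=H/\lambda_1$ and $A_2=0$. You also use the same evaluation at $P$ for (i), minimality via $F\in\fm I_Z$, and then Lemma~\ref{lem:jacobian-dual-linkage} together with $\mathcal Q=\mathcal L:\fm^\infty$. Two steps of your upper bound need repair.

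\emph{Step 4 rests on a false inequality.} Since $\mathcal L\subseteq J$ and $\dim S/\mathcal L=4$, you have $\dim S/J\le 4$. In fact $\dim S/J=4=\dim S/\fm S$, because $J$ is linked to $\fm S$. So unmixedness alone does not exclude $\fm S$ from $\operatorname{Ass}(S/J)$: $\fm S$ is a prime of height $3$, which is exactly the height of every associated prime of the Cohen--Macaulay ring $S/J$. The correct argument is the paper's. If $\fm S$ were associated, it would be minimal over $J$, so $J\subseteq\fm S$, contradicting $\overline D\neq 0$. Thus step 4 rests entirely on step 2, and so does step 3: if $hD\in\mathcal L\subseteq\fm S$, then $\overline h\,\overline D=0$ in the domain $k[T_1,\dots,T_4]$.

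\emph{Step 2 is not an obstacle and needs no case split.} The reduction $\overline B$ does not depend on the choice of $B$: two choices differ by syzygies of $(x,y,z)$, whose entries lie in $\fm S$. The variable $T_4$ occurs only in $L_3=B_1T_1+B_2T_2+B_3T_3-zT_4$, so in $\overline B$ it appears only as the summand $-T_4$ of the $z$-row entry of the third column. Because $\deg A_i=r-2\ge 2$, the second column of $\overline B$ is $(0,-T_3,0)^t$. The first column is the coefficient vector of $vT_1-uT_2$. Expanding along the second column, the coefficient of $T_4$ in $\overline D$ is $T_3(v_xT_1-u_xT_2)$, where $u_x,v_x$ are the coefficients of $x$ in $u,v$. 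This is nonzero: $u_x=v_x=0$ would force $I_Q=(y,z)$, i.e.\ $Q=[1:0:0]=P\in M$, which is impossible since $Q\notin M$. With the paper's choice this reads $\overline D=(T_1-T_2)T_3T_4+\cdots$. The other entries of the third column may survive modulo $\fm$ (for instance when $a=3$), but they do not involve $T_4$, so they are harmless.

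Your fallback would not have worked in any case. $I_X$ is generated in degrees $3,3,r,a$ with $r\ge 4$, so it is not equigenerated, and the column-degree formula for the image of a rational map does not apply.
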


\begin{proof}
By Proposition~\ref{prop:projective-invariance}, we may assume that
$L=V(z)$, $M=V(y)$, and that the point of $Z$ off $M$ is $p=[0:1:1]$. Write the
$r-1$ points of $Z$ on $M$ as $[a_i:0:1]$, $i=1,\ldots,r-1$, with the $a_i$ pairwise
distinct. Thus $P=L\cap M=[1:0:0]$. For each $i$, set
\[
\lambda_i:=x+a_i y-a_i z,
\qquad
H:=\prod_{i=1}^{r-1}\lambda_i.
\]
Then
\[
I_Z=(G_1,G_2,G_3),\qquad
G_1=y\lambda_1,\quad G_2=y\lambda_2,\quad G_3=H,
\]
and a Hilbert--Burch matrix of $I_Z$ is
\[
\varphi=
\begin{pmatrix}
\lambda_2 & H/\lambda_1\\
-\lambda_1 & 0\\
0 & -y
\end{pmatrix}.
\]

Let $g\in k[x,y]_a$ be the reduced equation of $Y$ on $L=V(z)$. Modulo $z$, we have
$\overline G_1=y(x+a_1y)$, $\overline G_2=y(x+a_2y)$, and
$\overline G_3=\prod_{i=1}^{r-1}(x+a_i y)$. Since
$\overline G_2-\overline G_1=(a_2-a_1)y^2$, we get $y^2\in
(\overline G_1,\overline G_2,\overline G_3)$, and then also
$xy\in(\overline G_1,\overline G_2,\overline G_3)$. Moreover
$\overline G_3=x^{r-1}+yQ$ for some $Q\in k[x,y]_{r-2}$, so
$x^{r-1}$ belongs to the same ideal. Hence, because $a\geq r-1$, every form of degree
$a$ in $k[x,y]$ belongs to
$(\overline G_1,\overline G_2,\overline G_3)$. Thus we may write
\[
g=A_1\overline G_1+A_2\overline G_2+B\overline G_3,
\]
with $A_1,A_2\in k[x,y]_{a-2}$ and $B\in k[x,y]_{a-r+1}$. Choosing homogeneous lifts
to $R$, set $F:=A_1G_1+A_2G_2+BG_3\in (I_Z)_a$. Then $F$ cuts out $Y$ modulo $z$, and
Lemma~\ref{lem:bdl-s-r} gives
\[
I_X=zI_Z+(F)=(zG_1,zG_2,zG_3,F).
\]

Assume first that $a=r-1$ and $P\notin Y$. Then $B$ is a scalar. Since
$\overline G_1(P)=\overline G_2(P)=0$ and $\overline G_3(P)=1$, we have $B=g(P)\neq 0$.
Thus $zF=A_1zG_1+A_2zG_2+BzG_3$, and hence $zG_3\in(zG_1,zG_2,F)$. Therefore
$I_X=(zG_1,zG_2,F)$ is generated by at most three elements. Since $\hht I_X=2$ and
$X$ is reduced, Remark~\ref{rem:ci-linear-type} gives $\rt(X)=1$.

\medskip
Now consider all remaining cases. Then $B\in\fm=(x,y,z)$: indeed, if $a>r-1$, then
$\deg B>0$, while if $a=r-1$ and $P\in Y$, then $B=g(P)=0$. Also
$A_1,A_2\in\fm$, because $\deg A_1=\deg A_2=a-2\geq r-3\geq 1$. Hence
$F\in\fm I_Z$.

\medskip
We claim that $zG_1,zG_2,zG_3,F$ is a minimal generating set of $I_X$. Clearly
$F\notin zI_Z$, since its image modulo $z$ is $g\neq 0$. If some $zG_i$ were redundant,
then
\[
zG_i=\sum_{j\neq i}u_jzG_j+vF.
\]
Reducing modulo $z$ gives $\overline v\,g=0$ in $k[x,y]$, hence $v=zw$. Cancelling
$z$, we get $G_i=\sum_{j\neq i}u_jG_j+wF$. Passing to $I_Z/\fm I_Z$ and using
$F\in\fm I_Z$ contradicts the minimality of $G_1,G_2,G_3$. Thus $\mu(I_X)=4$, so
$I_X$ is not of linear type by Remark~\ref{rem:not-linear-type-many-generators}.

\medskip
By Proposition~\ref{prop:bdl-resolution-s-r}, a Hilbert--Burch matrix of $I_X$ is
\[
\Phi=
\left(
\begin{array}{cc|c}
\lambda_2 & H/\lambda_1 & A_1\\
-\lambda_1 & 0 & A_2\\
0 & -y & B\\ \hline
0 & 0 & -z
\end{array}
\right)
\]
with respect to the ordered generators $zG_1,zG_2,zG_3,F$. Let
$S=R[T_1,T_2,T_3,T_4]$, and let $\mathcal L$ be the defining ideal of
$\operatorname{Sym}_R(I_X)$. Then $\mathcal L=(\mathcal L_1,\mathcal L_2,\mathcal L_3)$,
where
\[
\mathcal L_1=\lambda_2T_1-\lambda_1T_2,\quad
\mathcal L_2=(H/\lambda_1)T_1-yT_3,\quad
\mathcal L_3=A_1T_1+A_2T_2+BT_3-zT_4.
\]
Since $I_1(\Phi)=\fm$, the Jacobian dual construction applies. Let $B(\Phi)$ be a
Jacobian dual matrix satisfying
\[
[x\ y\ z]B(\Phi)=[\mathcal L_1\ \mathcal L_2\ \mathcal L_3]
\]
and set $D:=\det B(\Phi)$ and $J:=(\mathcal L,D)$.

\medskip
We use Lemma~\ref{lem:jacobian-dual-linkage}. We first show that $\mathcal L$ is a complete intersection. Since
$\mathcal L$ defines $\operatorname{Sym}_R(I_X)$, the Huneke--Rossi dimension formula
\cite{HunekeRossi} gives
\[
\dim \operatorname{Sym}_R(I_X)
=\sup_{\fp\in\operatorname{Spec}(R)}
\{\dim R/\fp+\mu((I_X)_\fp)\}.
\]
This supremum is $4$: outside $V(I_X)$ the localized ideal is principal, on the punctured
spectrum over $V(I_X)$ it is generated by two elements, and at $\fm$ it has four minimal
generators. Hence $\dim S/\mathcal L=4$. Since $\dim S=7$, we have
$\hht(\mathcal L)=3$, and therefore the three generators of $\mathcal L$ form a complete
intersection.

\medskip
Next, $\overline D\neq 0$ in $S/\fm S$. To see this, write
$H/\lambda_1=xU_1+yU_2+zU_3$. Since $r\geq 4$, the $U_i$ have positive degree, and
hence vanish modulo $\fm$. Also $A_1,A_2,B\in\fm$. Therefore, modulo $\fm S$, the only
term involving $T_4$ in $\overline D$ comes from the entry $-T_4$, and its coefficient is
$(T_1-T_2)T_3$. Hence
\[
\overline D=(T_1-T_2)T_3T_4+\text{terms not involving }T_4
\]
so $\overline D\neq 0$.

\medskip
Moreover, $\mathcal L:J=\fm S$. To wit: the inclusion $\fm S\subseteq\mathcal L:J$ follows from
$xD,yD,zD\in\mathcal L$. Conversely, if $H_0D\in\mathcal L$, then reducing modulo
$\fm S$ gives $\overline{H_0}\,\overline D=0$ in the domain
$k[T_1,T_2,T_3,T_4]$. Since $\overline D\neq 0$, we get $H_0\in\fm S$. Lemma~\ref{lem:jacobian-dual-linkage} now gives
\[
\mathcal L:\fm S=(\mathcal L,D)=J
\]
and shows that $S/J$ is Cohen--Macaulay. Since $\overline D\neq 0$, we have $J\not\subseteq \fm S$. As $S/J$ is Cohen--Macaulay of dimension $4$, its associated primes are minimal of height $3$. Thus no associated
prime of $S/J$ contains $\fm S$; otherwise it would equal $\fm S$, contradicting $J\not\subseteq \fm S$. Hence $S/J$ has no nonzero $\fm S$-torsion. 

\medskip
By the saturation description  for ideals of points, we have $\Q=\mathcal L:\fm^\infty$. Thus $J=\mathcal L:\fm S\subseteq \Q$, and $\Q/J$ is $\fm S$-torsion. Since
$\Q/J\subseteq S/J$ and $S/J$ has no nonzero $\fm S$-torsion, we get $\Q=J$.

\medskip
Thus the defining ideal of the Rees algebra is generated by the three linear equations
of $\mathcal L$ and the cubic equation $D$. Therefore $\rt(X)\leq 3$. Since $I_X$ is not
of linear type and relation type $2$ does not occur for finite sets of points in
$\PP^2_k$, we conclude that $\rt(X)=3$.
\end{proof}
\subsection{Triangular residual blocks in generic position}
\label{subsec:triangular-residual-blocks}
We recall the convention of Geramita--Maroscia \cite[Definition~1.5]{GM}. A finite set
$Z\subseteq \PP^2_k$ of $r$ points is said to be in \emph{generic position} if its Hilbert
function is the generic one, namely
\[
\HF_{R/I_Z}(t)=\min\left\{r,\binom{t+2}{2}\right\}
\quad\text{for all }t\geq 0.
\]
Equivalently, for every $t\geq 0$, the evaluation map
\[
R_t\longrightarrow k^r,\qquad F\longmapsto (F(p))_{p\in Z},
\]
has maximal rank.

Assume that $|Z|=\binom{d+1}{2}$ and that $Z$ is in generic position. Then $(I_Z)_{d-1}=0$ and, by Example~\ref{ex:triangular-number-general-points}, if
$d\geq 3$, then $\rt(Z)=3$.

We now introduce the residual blocks that will be used in the next results.
\begin{Definition}\label{def:triangular-residual-block}
Let $X=Y\cup Z$ be an $(s-r)$-fold collinear configuration. We say that $X$ has a
\emph{$d$-triangular residual block in generic position} if
\[
|Z|=\binom{d+1}{2}
\qquad\text{and}\qquad
(I_Z)_{d-1}=0.
\]
Equivalently, $Z$ consists of $\binom{d+1}{2}$ points in generic position.
\end{Definition}
The next lemma records the basic double-link description in the case where the residual
block has triangular cardinality and generic Hilbert function.

\begin{Lemma}\label{lem:triangular-block}
Let $X=Y\cup Z$ be an $(s-r)$-fold collinear configuration with distinguished line $L$,
and set $a:=|Y|=s-r$. Assume that $X$ has a $d$-triangular residual block in generic
position and that $a\geq d+1$. Then
\[
I_X=(h_0,\ldots,h_d,F),
\]
where $\deg h_i=d+1$ and $\deg F=a$. Moreover, $I_X$ has a minimal free resolution
\[
0\lar R(-d-2)^d\oplus R(-a-1)
\stackrel{\Phi}{\lar}
R(-d-1)^{d+1}\oplus R(-a)
\lar I_X\lar 0,
\]
where
\[
\Phi=
\left(
\begin{array}{c|c}
\varphi & A\\ \hline
0\ \cdots\ 0 & -z
\end{array}
\right),
\]
$\varphi$ is a linear Hilbert--Burch matrix of $I_Z$, and
$A={}^{t}(A_0,\ldots,A_d)$ with $A_i\in R_{a-d}$.
\end{Lemma}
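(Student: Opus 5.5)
We take $L=V(z)$ (Remark~\ref{rem:projective-invariance-use}) and apply Lemma~\ref{lem:bdl-s-r} and Proposition~\ref{prop:bdl-resolution-s-r} with the resolution of $I_Z$ from Example~\ref{ex:triangular-number-general-points}. We then check minimality. Since $(I_Z)_{d-1}=0$ and $|Z|=\binom{d+1}{2}$, that example gives the linear resolution
\[
0\lar R(-d-1)^d\stackrel{\varphi}{\lar}R(-d)^{d+1}\lar I_Z\lar 0,
\]
with $I_Z=(G_0,\ldots,G_d)$ and each $G_i$ of degree $d$. For $d\le 2$ the same shape holds directly: for one point it is two linear forms with a Koszul syzygy, and for three noncollinear points it is three quadrics with a linear Hilbert--Burch matrix. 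So we may use it for all $d\ge 1$.

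\textbf{Constructing $F$.} We need $F\in(I_Z)_a$ whose image modulo $z$ cuts out $Y$. Let $\overline{I_Z}$ denote the image of $I_Z$ in $k[x,y]=R/(z)$. Because $z$ is a nonzerodivisor on $R/I_Z$ (as $Z\cap L=\emptyset$), the Hilbert function of $R/(I_Z+(z))$ is the first difference of $\HF_{R/I_Z}$. That difference is $t+1$ for $t\le d-1$ and $0$ for $t\ge d$. Hence $\overline{I_Z}$ contains all forms of degree $\ge d$ in $k[x,y]$. Since $a\ge d+1>d$, the reduced equation $g\in k[x,y]_a$ of $Y$ lifts to some $F=\sum A_iG_i$ with $A_i\in R_{a-d}$. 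Lemma~\ref{lem:bdl-s-r} then gives
\[
I_X=zI_Z+(F)=(h_0,\ldots,h_d,F),\qquad h_i:=zG_i,\quad \deg h_i=d+1.
\]
Proposition~\ref{prop:bdl-resolution-s-r} produces the displayed resolution with the stated $\Phi$. The twists match: $b_j=d+1$ gives $R(-d-2)^d$, and the extra summand is $R(-a-1)$.

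\textbf{Minimality.} This is the main point, and we argue as in the proof of Theorem~\ref{thm:rt-residual-r-1-fold}. Every $A_i$ has degree $a-d\ge 1$, and $\varphi$ is linear, so all entries of $\Phi$ lie in $\fm$. A resolution whose maps have entries in $\fm$ is minimal. In particular $\mu(I_X)=d+2$, so $h_0,\ldots,h_d,F$ is a minimal generating set.

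For completeness, minimality of the generators can also be checked directly:
\begin{itemize}
\item $F\notin zI_Z$, because its image modulo $z$ is $g\neq0$.
\item Suppose $zG_i=\sum_{j\ne i}u_jzG_j+vF$. Reducing modulo $z$ forces $v=zw$. Cancelling $z$ then expresses $G_i$ in terms of the other $G_j$ modulo $F\in\fm I_Z$. This contradicts the minimality of $G_0,\ldots,G_d$.
\end{itemize}
The only real input is the surjectivity $(\overline{I_Z})_t=k[x,y]_t$ for $t\ge d$; everything else is routine bookkeeping of degrees.
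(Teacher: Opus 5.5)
Your proposal is correct and follows essentially the same route as the paper. The paper also lifts $g$ through the reduction map $(I_Z)_d\to k[x,y]_d$, which it shows is an isomorphism using that $z$ is a nonzerodivisor and $(I_Z)_{d-1}=0$ (equivalent to your first-difference computation); it then applies Lemma~\ref{lem:bdl-s-r} and Proposition~\ref{prop:bdl-resolution-s-r}, and checks minimality by the same reduction-mod-$z$ redundancy argument you give as a backup. Your observation that all entries of $\Phi$ lie in $\fm$ is a slightly quicker route to minimality, and your remark on $d\le 2$ covers cases that the paper's citation of Example~\ref{ex:triangular-number-general-points} (stated for $d\ge 3$) leaves implicit.
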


\begin{proof}
After a projective change of coordinates, assume $L=V(z)$. By
Example~\ref{ex:triangular-number-general-points}, the ideal $I_Z$ is generated by
$d+1$ forms $G_0,\ldots,G_d$ of degree $d$ and has a linear Hilbert--Burch resolution
\[
0\lar R(-d-1)^d\stackrel{\varphi}{\lar}R(-d)^{d+1}\lar I_Z\lar 0.
\]

The images $\overline G_0,\ldots,\overline G_d$ form a basis of $k[x,y]_d$. Indeed, if
$G\in(I_Z)_d$ and $\overline G=0$, then $G=zH$ with $H\in R_{d-1}$. Since
$Z\cap V(z)=\emptyset$, the form $z$ is a nonzerodivisor on $R/I_Z$, so
$H\in(I_Z)_{d-1}=0$. Thus the reduction map $(I_Z)_d\lar k[x,y]_d$ is injective, hence
an isomorphism.

Let $g\in k[x,y]_a$ be the reduced equation of $Y$ on $L$. Since
$(\overline G_0,\ldots,\overline G_d)=(x,y)^d$ and $a\geq d+1$, we may write
\[
g=A_0\overline G_0+\cdots+A_d\overline G_d,
\qquad A_i\in k[x,y]_{a-d}.
\]
Lifting the $A_i$ to $R$, set $F=A_0G_0+\cdots+A_dG_d\in(I_Z)_a$. Then $F$ cuts out
$Y$ modulo $z$, and Lemma~\ref{lem:bdl-s-r} gives
\[
I_X=zI_Z+(F)=(zG_0,\ldots,zG_d,F).
\]
Set $h_i:=zG_i$. Since $\overline F=g\neq 0$, we have $F\notin zI_Z$. Also, because
$a\geq d+1$, all nonzero $A_i$ have positive degree, and hence $F\in\fm I_Z$.

It remains only to check minimality. If some $h_i=zG_i$ were redundant, then
\[
zG_i=\sum_{j\neq i}u_jzG_j+vF.
\]
Reducing modulo $z$ gives $\overline v\,g=0$ in $k[x,y]$, hence $v=zw$. Cancelling $z$,
we obtain
\[
G_i=\sum_{j\neq i}u_jG_j+wF.
\]
Passing to $I_Z/\fm I_Z$ and using $F\in\fm I_Z$ contradicts the minimality of
$G_0,\ldots,G_d$. Therefore $h_0,\ldots,h_d,F$ are minimal generators. The displayed
minimal resolution follows from Proposition~\ref{prop:bdl-resolution-s-r}.
\end{proof}
\begin{Remark}\label{rem:triangular-block-small-a}
The assumption $a\geq d+1$ in Lemma~\ref{lem:triangular-block} is exactly the condition
which prevents cancellations in the basic double-link resolution. Indeed, since $Z$ is
$d$-triangular in generic position, Example~\ref{ex:triangular-number-general-points}
gives $\operatorname{indeg}(I_Z)=d$. Hence, if $a<d$, then $(I_Z)_a=0$, so no form of
degree $a$ in $I_Z$ can cut out $Y$ on $L$.

If $a=d$, then the reduction map $(I_Z)_d\lar k[x,y]_d$ is an isomorphism, so such a form
$F\in (I_Z)_d$ exists. However, $F$ is a constant linear combination of the degree-$d$
minimal generators of $I_Z$. After changing the basis of $(I_Z)_d$, one may assume
$F=G_0$. Then
\[
I_X=zI_Z+(F)=(F,zG_1,\ldots,zG_d),
\]
because $zG_0=zF$ is redundant. Thus a cancellation occurs in the basic double-link
resolution. For this reason, Lemma~\ref{lem:triangular-block} is stated under the
hypothesis $a\geq d+1$.
\end{Remark}
We now show that, once the collinear block is sufficiently large, triangular residual
blocks in generic position force relation type three.

\begin{Theorem}\label{thm:rt-triangular-block}
Let $X=Y\cup Z\subseteq \PP^2_k$ be an $(s-r)$-fold collinear configuration, and set
$a:=|Y|=s-r$. Assume that $X$ has a $d$-triangular residual block in generic position,
with $d\geq 2$. If $a\geq d+1$, then $\rt(X)=3$.
\end{Theorem}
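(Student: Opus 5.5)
The plan is to bound $\rt(X)$ from both sides. The lower bound $\rt(X)\ge 3$ is immediate. By Lemma~\ref{lem:triangular-block}, $I_X=(zG_0,\ldots,zG_d,F)$ is minimally generated by $d+2\ge 4$ forms, so $I_X$ is not of linear type by Remark~\ref{rem:not-linear-type-many-generators}. Proposition~\ref{prop:points-plane-gap} then gives $\rt(X)\ge 3$. All the work is in proving $\rt(X)\le 3$, that is, in showing $\Q$ is generated in $T$-degree at most $3$.

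Set $S=R[T_0,\ldots,T_d,T_{d+1}]$, with $T_i\mapsto zG_i$ and $T_{d+1}\mapsto F$, and let $\Q_Z\subseteq R[T_0,\ldots,T_d]$ be the defining ideal of $\Rees_R(I_Z)$. Since $\Rees_R(zI_Z)\simeq\Rees_R(I_Z)$, the linear columns $\varphi$ of $\Phi$ and the Morey--Ulrich theorem (Example~\ref{ex:triangular-number-general-points} for $d\ge3$; for $d=2$, $\mu(I_Z)=3$ and $I_Z$ is of linear type) give
\[
\Q_Z=\mathcal L_Z+I_3(B(\varphi)),
\]
which is generated in $T$-degrees $\le 3$. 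The last column of $\Phi$ gives the linear equation
\[
E:=A_0T_0+\cdots+A_dT_d-zT_{d+1}.
\]
Consider the candidate ideal $J:=\Q_ZS+(E)$, or equivalently $\mathcal L+I_3(B(\Phi))$, where $B(\Phi)$ is obtained by adjoining to $B(\varphi)$ a Jacobian dual column of $E$. By construction, $J$ is generated in $T$-degree $\le 3$ and $J\subseteq\Q$.

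The key observation is that after inverting $z$ the two ideals agree. Because $F\in I_Z$, we have $(I_X)_z=(I_Z)_z$, and in $S_z$ the relation $E$ expresses $T_{d+1}=z^{-1}\sum A_iT_i$. Hence
\[
\Rees_R(I_X)_z\simeq \Rees_{R_z}((I_Z)_z)=\bigl(R[T_0,\ldots,T_d]/\Q_Z\bigr)_z ,
\]
and therefore $\Q_z=J_z$. Consequently $\Q=J:z^\infty$. It therefore suffices to prove that $z$ is a nonzerodivisor on $S/J$.

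This is the main obstacle. Note that $S/J\simeq \Rees_R(I_Z)[T_{d+1}]/(E)$, and $\Rees_R(I_Z)$ is a Cohen--Macaulay domain by Morey--Ulrich (a polynomial ring in the case $d=2$). Since $E\neq 0$ in this domain, $E$ is a nonzerodivisor on $\Rees_R(I_Z)[T_{d+1}]$, so $S/J$ is Cohen--Macaulay of dimension $\dim\Rees_R(I_Z)=4$. Thus $z$ is regular on $S/J$ as soon as
\[
\dim \Rees_R(I_Z)[T_{d+1}]/(E,z)\le 3 .
\]
Modulo $z$, the equation $E$ becomes $\sum \overline{A_i}T_i=0$, which is independent of $T_{d+1}$. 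So the dimension condition reduces to showing that
\[
\sum_{i}\overline{A_i}\,T_i=\sum_i \overline{A_i}\,G_i\,t=\overline{F}\,t
\]
is a nonzerodivisor, or at least a parameter, on the $3$-dimensional ring $\Rees_R(I_Z)/z\Rees_R(I_Z)$. Here $z$ is regular on $\Rees_R(I_Z)$, because $z$ is regular on $R$ and $\Rees_R(I_Z)\subseteq R[t]$. This quotient is the associated graded-type algebra $\bigoplus_n I_Z^n/zI_Z^n$, and it surjects onto the Rees algebra of the image of $I_Z$ in $k[x,y]$, namely $\Rees_{k[x,y]}((x,y)^d)$. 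The first attempt will be to use this surjection: $g=\overline F$ is a nonzero element of the domain $k[x,y][t]$. The remaining task is to show that $\overline Ft$ avoids every minimal prime of $\Rees_R(I_Z)/(z)$, and I would do this by identifying those primes via $\operatorname{Proj}$, as lying over $V(z)\cap V(I_Z)=\emptyset$ or over the fibre at $\fm$. If this approach stalls, the fallback is to verify $J:z=J$ directly with a Gröbner degeneration in the coordinates of Lemma~\ref{lem:triangular-block}. Once regularity of $z$ is established, $\Q=J$ is generated in $T$-degree $\le 3$, and combined with the lower bound this gives $\rt(X)=3$.
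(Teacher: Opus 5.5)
Your lower bound and your reduction $\Q=J:z^\infty$ are correct. The step you flag as the main obstacle, however, is not merely hard: it is false. The element $z$ is a zerodivisor on $S/J$, so $J\subsetneq\Q$. Since $a\geq d+1$, every $A_i$ lies in $\fm$, so $E\in\fm S$. Hence $J+\fm S=\mathcal K_ZS+\fm S$, where $\mathcal K_Z\subseteq k[T_0,\ldots,T_d]$ is the defining ideal of $\mathcal F(I_Z)$. It follows that
\[
S/(J+\fm S)\simeq \mathcal F(I_Z)[T_{d+1}]
\]
has dimension $\dim\mathcal F(I_Z)+1=4$, because the map given by $(I_Z)_d$ has two-dimensional image. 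On the other hand, $S/(\Q+\fm S)=\mathcal F(I_X)$ has dimension at most $\dim R=3$.

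In terms of your own reduction: $I_Z$ is equigenerated, so $\Rees_R(I_Z)/\fm\Rees_R(I_Z)=\mathcal F(I_Z)$ is a $3$-dimensional domain. Therefore $\fm\Rees_R(I_Z)$ is a height-one prime containing $z$, i.e.\ it gives a minimal prime of $\Rees_R(I_Z)/z\Rees_R(I_Z)$. This is exactly the ``fibre at $\fm$'' component you hoped to avoid, and $\overline F t=\sum_i A_i\,(G_it)$ lies in it. So $\overline F t$ is never a parameter there, and the Gr\"obner fallback cannot succeed either.

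The case $d=2$ makes this transparent. There $\Q_Z=\mathcal L_Z$; incidentally, $\Rees_R(I_Z)$ is then $R[T_0,T_1,T_2]$ modulo two linear forms, not a polynomial ring. Hence $J$ is exactly the symmetric ideal $\mathcal L$ of $I_X$. Regularity of $z$ on $S/J$ would then make $I_X$ of linear type, contradicting $\mu(I_X)=4$.

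The error enters with ``or equivalently $\mathcal L+I_3(B(\Phi))$'': these two ideals are different. The $3$-minors of $B(\Phi)$ that use the Jacobian-dual column of $E$ are cubic equations involving $T_{d+1}$ that survive modulo $\fm S$. For example, when $a>d+1$ and one uses the normal form $\overline G_i=x^{d-i}y^i$, they reduce to $T_{d+1}$ times the $2$-minors of the catalecticant $N$. By contrast, $J$ reduces modulo $\fm S$ into $\mathcal K_Z[T_{d+1}]$, and $\mathcal K_Z$ has no nonzero quadrics (it is $0$ for $d=2$ and generated by cubics for $d\geq 3$). These minors are precisely what removes the extra component $V(\fm S+\mathcal K_ZS)$ of $J$.

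So what must actually be proved is $\Q=\mathcal L+I_3(B(\Phi))$, and your hypersurface-section and Cohen--Macaulay argument does not transfer to that ideal. The paper handles $a=d+1$ by Morey--Ulrich, since $\Phi$ is then linear. For $a>d+1$ it computes $B(\Phi)$ modulo $\fm S$ in the normal form above. It then shows $\hht\bigl(I_3(B(\Phi))+\fm S\bigr)/\fm S\geq d-1=\mu(I_X)-3$, using $T_{d+1}I_2(N)$ together with $\hht I_3(\overline{B(\varphi)})\geq d-2$, and concludes by Morey's criterion.
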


\begin{proof}
By Lemma~\ref{lem:triangular-block}, we may write
\[
I_X=(zG_0,\ldots,zG_d,F),
\]
where $G_0,\ldots,G_d$ minimally generate $I_Z$ and
$F=A_0G_0+\cdots+A_dG_d$. A Hilbert--Burch matrix of $I_X$ is
\[
\Phi=
\left(
\begin{array}{c|c}
\varphi & A\\ \hline
0\ \cdots\ 0 & -z
\end{array}
\right),
\]
where $\varphi$ is linear and $A={}^{t}(A_0,\ldots,A_d)$.

If $a=d+1$, then all entries of $A$ are linear, so $\Phi$ is linear. Since
$\mu(I_X)=d+2\geq 4$, Proposition~\ref{prop:points-plane-gap} gives $\rt(X)=3$.

Assume now that $a>d+1$. Then all entries of $A$ have degree at least $2$. Since the reduction map $(I_Z)_d\lar k[x,y]_d$ is an isomorphism, we may replace the
minimal generators $G_0,\ldots,G_d$ of $I_Z$ by a suitable $k$-linear combination and
assume that $\overline G_i=x^{d-i}y^i$ in $R/(z)$ for $i=0,\ldots,d$. With this choice, the reduction modulo $z$ of a Hilbert--Burch matrix of $I_Z$ is the
Hilbert--Burch matrix of $(x,y)^d$, namely
\[
\overline\varphi=
\begin{pmatrix}
y & 0 & \cdots & 0\\
-x & y & \ddots & \vdots\\
0 & -x & \ddots & 0\\
\vdots & \ddots & \ddots & y\\
0 & \cdots & 0 & -x
\end{pmatrix}.
\]
Let $S=R[T_0,\ldots,T_d,T]$, and let $B(\Phi)$ be a Jacobian dual matrix. Modulo
$\fm S$, the last column of $B(\Phi)$ is ${}^{t}(0,0,-T)$, while the first two rows
contain the catalecticant matrix
\[
N=
\begin{pmatrix}
T_1 & T_2 & \cdots & T_d\\
T_0 & T_1 & \cdots & T_{d-1}
\end{pmatrix}.
\]
Hence $I_3(\overline{B(\Phi)})$ contains $T\,I_2(N)$. Since $I_2(N)$ defines the
rational normal curve in $\PP^d$, we have $\hht I_2(N)=d-1$.

Let $\mathfrak p$ be a prime containing $I_3(\overline{B(\Phi)})$. If
$T\notin\mathfrak p$, then $I_2(N)\subseteq\mathfrak p$, and hence
$\hht \mathfrak p\geq d-1$. If $T\in\mathfrak p$, then
\[
(I_3(\overline{B(\varphi)}),T)\subseteq\mathfrak p.
\]
For $d=2$, this already gives height at least $1=d-1$. For $d\geq 3$, applying the
Morey--Ulrich theorem to the linearly presented ideal $I_Z$ gives
\[
\dim k[T_0,\ldots,T_d]/I_3(\overline{B(\varphi)})\leq 3,
\]
and hence $\hht I_3(\overline{B(\varphi)})\geq d-2$. Therefore
\[
\hht (I_3(\overline{B(\varphi)}),T)\geq d-1.
\]
Thus
\[
\hht\frac{I_3(B(\Phi))+\fm S}{\fm S}\geq d-1.
\]

Since $\mu(I_X)=d+2$, this is precisely the height condition in Morey's criterion
\cite[Proposition~3.1]{Morey}. Hence the defining ideal of the Rees algebra of $I_X$ is generated by the
symmetric equations and the $3$-minors of $B(\Phi)$. Therefore $\rt(X)\leq 3$.

On the other hand, $\mu(I_X)=d+2\geq 4$, so $I_X$ is not of linear type by
Remark~\ref{rem:not-linear-type-many-generators}. Since relation type $2$ does not
occur for finite sets of points in $\PP^2_k$, Proposition~\ref{prop:points-plane-gap}
gives $\rt(X)=3$.
\end{proof}

\begin{Corollary}\label{cor:rt-s-3-fold}
Let $X\subseteq \PP^2_k$ be an $(s-3)$-fold collinear configuration of $s\geq 6$
distinct points. Assume that the remaining three points are not collinear. Then
$\rt(X)=3$.
\end{Corollary}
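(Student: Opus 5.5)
This corollary should be the case $d=2$ of Theorem~\ref{thm:rt-triangular-block}. Write $X=Y\cup Z$, where $Y=X\cap L$ is the collinear block on the distinguished line $L$, $a=|Y|=s-3$, and $Z$ is the residual set of three points off $L$.

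First I check that $X$ has a $2$-triangular residual block in generic position, in the sense of Definition~\ref{def:triangular-residual-block}. We have $|Z|=3=\binom{3}{2}$. The condition $(I_Z)_{1}=0$ says that no line contains $Z$, which is exactly the hypothesis that the three residual points are not collinear. Equivalently, the Hilbert function of $R/I_Z$ is $1,3,3,\ldots$, which is the generic one.

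Next comes the numerical hypothesis of the theorem, $a\geq d+1=3$. Since $s\geq 6$, we get $a=s-3\geq 3$. This is also consistent with Definition~\ref{def:s-r-fold-collinear}, which requires $r=3\leq s-3$.

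Theorem~\ref{thm:rt-triangular-block}, applied with $d=2$, then gives $\rt(X)=3$ directly.

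The only point I would double-check is that the $d=2$ case of the theorem's proof is sound, since Example~\ref{ex:triangular-number-general-points} is stated only for $d\geq 3$. In the $d=2$ case, $I_Z$ is generated by three quadrics and has a linear Hilbert--Burch matrix, which is elementary for three non-collinear points: after a change of coordinates $I_Z=(xy,yz,xz)$. The generator count is $\mu(I_X)=d+2=4$, which excludes linear type. The height estimate uses the case $d=2$, where the required height is $d-1=1$; this is explicitly handled in that proof. So no extra work is needed beyond confirming these inputs.
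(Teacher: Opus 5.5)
Your proposal is correct and is exactly the paper's argument: the corollary is Theorem~\ref{thm:rt-triangular-block} with $d=2$, after checking $|Z|=3=\binom{3}{2}$, that non-collinearity of $Z$ is $(I_Z)_1=0$, and that $a=s-3\geq 3=d+1$. Your extra check that $I_Z$ still has a linear Hilbert--Burch matrix when $d=2$ (e.g.\ $I_Z=(xy,yz,xz)$ in suitable coordinates) is a useful patch. Lemma~\ref{lem:triangular-block} takes this fact from Example~\ref{ex:triangular-number-general-points}, which is stated only for $d\geq 3$.
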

\begin{proof}
This is Theorem~\ref{thm:rt-triangular-block} for $d=2$. Indeed,
$|Z|=3=\binom{3}{2}$, the condition that the three residual points are not collinear is
$(I_Z)_1=0$, and $|Y|=s-3\geq 3=d+1$.
\end{proof}
We now specialize the preceding results to the first nontrivial residual size, namely $r=4$. In this case the relation type is completely determined by the
collinearity pattern of the four residual points.

\begin{Proposition}\label{prop:r4-classification}
Let $X=Y\cup Z$ be an $(s-4)$-fold collinear configuration of $s\geq 7$ distinct points,
where $Y$ lies on the distinguished line $L$ and $|Z|=4$. Then the following hold.

\begin{enumerate}
\item[\rm (i)] One has $\rt(X)=1$ if and only if one of the following holds:
\begin{enumerate}
\item[\rm (a)] the four points of $Z$ are collinear;
\item[\rm (b)] no three points of $Z$ are collinear;
\item[\rm (c)] exactly three points of $Z$ are collinear on a line $M$, and, writing
$P:=L\cap M$, one has $s=7$ and $P\notin Y$.
\end{enumerate}

\item[\rm (ii)] One has $\rt(X)=3$ if and only if exactly three points of $Z$ are
collinear on a line $M$, and, writing $P:=L\cap M$, either $s>7$, or $s=7$ and
$P\in Y$.
\end{enumerate}
\end{Proposition}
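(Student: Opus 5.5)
The plan is to split according to the collinearity pattern of the four residual points. There are exactly three possibilities: all four points of $Z$ are collinear, no three are collinear, or exactly three are collinear. These are mutually exclusive and exhaustive. So it suffices to show that $\rt(X)=1$ in cases (a), (b) and (c), and $\rt(X)=3$ in the remaining subcase of the third pattern. Both ``if and only if'' statements in (i) and (ii) then follow at once, since $1\neq 3$. Throughout, $a:=|Y|=s-4\geq 3$, and by Remark~\ref{rem:projective-invariance-use} we take $L=V(z)$.

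\emph{Case (a).} If $Z$ is collinear, Corollary~\ref{cor:sr-conic-linear-type} gives $\rt(X)=1$ directly.

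\emph{Case (b).} Four points with no three collinear form a complete intersection of two conics, so $I_Z=(Q_1,Q_2)$ with $\deg Q_i=2$. The form $z$ is a nonzerodivisor on $R/I_Z$, because $Z\cap L=\emptyset$. Hence $\overline Q_1,\overline Q_2$ form a regular sequence in $k[x,y]$. The quotient $k[x,y]/(\overline Q_1,\overline Q_2)$ then has Hilbert function $1,2,1,0,\dots$, so $(\overline Q_1,\overline Q_2)$ contains all of $k[x,y]_a$ for $a\geq 3$. I would therefore write the reduced equation $g$ of $Y$ as $g=A_1\overline Q_1+A_2\overline Q_2$ and lift to $F=A_1Q_1+A_2Q_2\in(I_Z)_a$, which cuts out $Y$ on $L$. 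Lemma~\ref{lem:bdl-s-r} then gives
\[
I_X=(zQ_1,zQ_2,F).
\]
Thus $\mu(I_X)\leq 3$. Since $\hht I_X=2$ and $I_X$ is a complete intersection on the punctured spectrum, Remark~\ref{rem:ci-linear-type} (as used in Proposition~\ref{prop:points-on-conic}) yields $\rt(X)=1$. Note that this happens even though $X$ does not lie on a conic, by Proposition~\ref{prop:sr-conic-characterization}.

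\emph{Exactly three points of $Z$ collinear.} Here $Z$ is itself a $3$-fold, i.e.\ an $(r-1)$-fold, collinear configuration with $r=4$. The hypothesis $a\geq r-1=3$ of Theorem~\ref{thm:rt-residual-r-1-fold} is exactly $s\geq 7$. That theorem gives two outcomes. If $a=3$ (that is, $s=7$) and $P=L\cap M\notin Y$, then $\rt(X)=1$, which is case (c). In every other case, namely $s>7$, or $s=7$ with $P\in Y$, we get $\rt(X)=3$, which is (ii).

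The only step needing genuine verification is the surjectivity in case (b): that $(I_Z)_a$ reduces onto $k[x,y]_a$, so that a suitable $F$ exists and Lemma~\ref{lem:bdl-s-r} applies. This rests on the regular-sequence argument above. Everything else is a direct appeal to earlier results.
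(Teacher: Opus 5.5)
Your proposal is correct and follows the paper's proof essentially step for step. You use the same three-way split on the collinearity of $Z$, the same two-conic complete-intersection argument (via the Hilbert function $1,2,1$ of $k[x,y]/(\overline Q_1,\overline Q_2)$ and Lemma~\ref{lem:bdl-s-r}) to get $I_X=(zQ_1,zQ_2,F)$, and the same appeal to Theorem~\ref{thm:rt-residual-r-1-fold} with $r=4$; the only cosmetic difference is citing Corollary~\ref{cor:sr-conic-linear-type} in case (a) where the paper cites Proposition~\ref{prop:two-collinear-blocks}, and both reduce to Proposition~\ref{prop:points-on-conic}.
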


\begin{proof}
There are three possibilities for the residual block $Z$: all four points are collinear,
no three are collinear, or exactly three are collinear.

If the four points of $Z$ are collinear, then $X$ is contained in the union of two
distinct lines, and hence $\rt(X)=1$ by Proposition~\ref{prop:two-collinear-blocks}.

Assume next that no three points of $Z$ are collinear. After a projective change of
coordinates, let $L=V(z)$. Set $a:=|Y|=s-4$, and let $g\in k[x,y]_a$ be the reduced
equation of $Y$ on $L$. Since no three points of $Z$ are collinear, $I_Z$ is a complete
intersection of two quadrics, say $I_Z=(q_1,q_2)$. The images
$\overline q_1,\overline q_2\in k[x,y]$ have no common zero on $L$, because
$Z\cap L=\emptyset$. Thus $k[x,y]/(\overline q_1,\overline q_2)$ has Hilbert function
$1,2,1$, and so $(\overline q_1,\overline q_2)_j=k[x,y]_j$ for every $j\geq 3$.
Since $a=s-4\geq 3$, there exists $F\in (I_Z)_a$ whose image modulo $z$ is $g$. Hence
Lemma~\ref{lem:bdl-s-r} gives
\[
I_X=zI_Z+(F)=(zq_1,zq_2,F).
\]
Thus $I_X$ is generated by at most three elements. Since $\hht I_X=2$ and $X$ is
reduced, Remark~\ref{rem:ci-linear-type} gives $\rt(X)=1$.

It remains to treat the case where exactly three points of $Z$ are collinear. Let $M$
be the line containing them and set $P:=L\cap M$. Then $Z$ is an $(r-1)$-fold collinear
configuration with $r=4$. Here $a=s-4\geq 3=r-1$, so
Theorem~\ref{thm:rt-residual-r-1-fold} applies and gives
\[
\rt(X)=1
\quad\Longleftrightarrow\quad
a=r-1 \text{ and } P\notin Y.
\]
Since $a=r-1$ is equivalent to $s=7$, we get $\rt(X)=1$ exactly when
$s=7$ and $P\notin Y$; in all other cases of this type, $\rt(X)=3$. This proves both
assertions.
\end{proof}
\subsection{$(s-5)$-fold collinear configurations}

We now consider the case $r=5$. Let $X=Y\cup Z$ be an $(s-5)$-fold collinear
configuration, where $|Y|=s-5$, $|Z|=5$, and $Y$ lies on the distinguished line $L$.
Thus $s\geq 8$. The residual block $Z$ has one of the following incidence types:
\begin{enumerate}
\item[\rm (i)] the five points of $Z$ are collinear;
\item[\rm (ii)] exactly four points of $Z$ are collinear and the fifth point lies off
that line;
\item[\rm (iii)] $Z$ has two distinct three-point collinear subsets;
\item[\rm (iv)] $Z$ has exactly one three-point collinear subset;
\item[\rm (v)] no three points of $Z$ are collinear.
\end{enumerate}
In cases {\rm (iii)--(v)}, let $C_Z$ denote the unique conic through $Z$; it may be
reducible in cases {\rm (iii)} and {\rm (iv)}.

\begin{Proposition}\label{prop:r5-classification}
Let $X=Y\cup Z$ be an $(s-5)$-fold collinear configuration of $s\geq 8$ distinct
points. Assume, in cases {\rm (iii)--(v)}, that the conic $C_Z$ meets $L$ in two
distinct points. Then $\rt(X)=1$ precisely in the following cases:
\begin{enumerate}
\item[\rm (a)] the five points of $Z$ are collinear;
\item[\rm (b)] exactly four points of $Z$ are collinear on a line $M$, the fifth point
lies off $M$, and, writing $P:=L\cap M$, one has $s=9$ and $P\notin Y$;
\item[\rm (c)] $s=8$, $Z$ is of type {\rm (iii)}, {\rm (iv)}, or {\rm (v)}, and
$C_Z$ contains at most one point of $Y$.
\end{enumerate}
In all remaining cases, $\rt(X)=3$.
\end{Proposition}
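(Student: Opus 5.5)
I will go through the incidence types (i)--(v) of $Z$ one at a time, following the same pattern as Proposition~\ref{prop:r4-classification}. Type (i) is quick. $X$ is then contained in $L\cup M$, so Proposition~\ref{prop:two-collinear-blocks} gives $\rt(X)=1$; this is case (a). In type (ii), $Z$ is a $4$-fold collinear configuration with $r=5$. Since $a=s-5\geq 3$, the hypothesis $a\geq r-1=4$ of Theorem~\ref{thm:rt-residual-r-1-fold} holds except when $s=8$. For $s\geq 9$ that theorem gives $\rt(X)=1$ exactly when $a=4$ (that is, $s=9$) and $P\notin Y$, which is case (b), and $\rt(X)=3$ otherwise. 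The boundary case $s=8$, $a=3$, needs its own check. There $X$ contains four points on $M$ and three on $L$, so it is also a $4$-fold collinear configuration with distinguished line $M$, and its residual block is $Y$ together with the fifth point. I will treat it as an $(s-4)$-fold configuration via Proposition~\ref{prop:r4-classification}, or else redo the computation of Theorem~\ref{thm:rt-residual-r-1-fold} with $\deg B=-1$, i.e.\ $B=0$. I expect this to give $\rt(X)=3$, since it is not among the listed linear-type cases.

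In types (iii)--(v), $Z$ lies on the unique conic $C_Z=V(Q)$ and is not collinear, so $I_Z=(Q,C_1,C_2)$ with $C_1,C_2$ cubics. The Hilbert--Burch matrix has column degrees $(1,2)$, or in the reducible cases possibly $(2,1)$ with different generators; in all cases the Hilbert function of $Z$ is $1,3,5,5,\dots$. By hypothesis $C_Z\cap L$ consists of two distinct points $P_1,P_2$. Reducing modulo $z$, the ideal $(\overline Q,\overline C_1,\overline C_2)\subseteq k[x,y]$ agrees with $(\overline Q)$ in degree $2$ and fills $k[x,y]_j$ for $j\geq 3$, because $Z\cap L=\emptyset$ gives $\overline{I_Z}$ finite colength, with Hilbert function $1,2,1,0$. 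So for $a\geq 3$ we can write $g=A\overline Q+B_1\overline C_1+B_2\overline C_2$. The key case distinction is $a=3$, $s=8$. There $B_1,B_2$ are scalars and $A$ is linear, and we can choose $B_i\neq 0$ unless $g\in(\overline Q)$. That happens exactly when $P_1,P_2\in Y$, i.e.\ when $C_Z$ contains two points of $Y$. If $C_Z$ contains at most one point of $Y$, some $B_i$ is a unit. Then in $I_X=(zQ,zC_1,zC_2,F)$ the generator $zC_i$ is redundant, so $\mu(I_X)\le 3$ and Remark~\ref{rem:ci-linear-type} gives $\rt(X)=1$; this is case (c). The remaining cases are $a\geq 4$, or $a=3$ with $P_1,P_2\in Y$. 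In both, every coefficient can be taken in $\fm$, so $F\in\fm I_Z$, and the minimality argument from Theorem~\ref{thm:rt-residual-r-1-fold} gives $\mu(I_X)=4$. Hence $\rt(X)\geq 3$ by Proposition~\ref{prop:points-plane-gap}.

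The main obstacle is the upper bound $\rt(X)\le 3$ in these $\mu=4$ cases. Proposition~\ref{prop:bdl-resolution-s-r} gives a $4\times 3$ Hilbert--Burch matrix $\Phi$ with $I_1(\Phi)=\fm$. I will follow the proof of Theorem~\ref{thm:rt-residual-r-1-fold} exactly. First, the Huneke--Rossi formula shows that $\mathcal L$ is a complete intersection of height $3$. Next, I will choose a Jacobian dual $B(\Phi)$ and show $\overline D\neq 0$ in $k[T_1,\dots,T_4]$. By Lemma~\ref{lem:jacobian-dual-linkage} this gives $\mathcal L:\fm S=(\mathcal L,D)$ with Cohen--Macaulay quotient. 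Finally, saturation gives $\Q=(\mathcal L,D)$, so $\rt(X)=3$. The delicate point is $\overline D\neq0$. Only the linear entries of $\varphi$ survive modulo $\fm$, and the last column reduces to $(0,0,-T_4)$ plus contributions from the linear parts of $A$, which appear only when $a=3$. I plan to expand along the $T_4$ entry: the coefficient of $T_4$ is the $2\times2$ minor coming from the Jacobian dual of the linear column of $\varphi$ (the syzygy involving $Q$ and the cubics) together with the linear part of the quadratic column. I will check that this minor is nonzero by normalizing coordinates separately in each of the types (iii), (iv), (v), as was done with $M=V(y)$ earlier. The reducible-conic types are where I expect the normalization to need the most care. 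If the direct expansion fails, a fallback is to use Morey's height criterion \cite[Proposition~3.1]{Morey}, as in Theorem~\ref{thm:rt-triangular-block}.
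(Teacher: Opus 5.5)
Your treatment of type (i), of type (ii) with $s\ge 9$, and of types (iii)--(v) follows the paper's proof. That includes the basic double link $I_X=(zq,zc_1,zc_2,F)$, the redundancy of $zc_i$ when $a=3$ and $g\notin(\overline q)$, $\mu(I_X)=4$ otherwise, and the Jacobian-dual linkage for $\rt(X)\le 3$. The genuine gap is type (ii) with $s=8$. There you list two alternative routes and then say you ``expect'' $\rt(X)=3$ because the case is not among the listed linear-type cases. That reasoning is circular. Moreover, neither route works on its own, because you never split according to whether $P=L\cap M$ lies in $Y$.

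\emph{Case $P\notin Y$.} Then $M$ contains exactly four points of $X$. The residual block $Y\cup\{p\}$ has exactly three collinear points, namely those on $L$, since $p\notin L$. As $s=8>7$, Proposition~\ref{prop:r4-classification} gives $\rt(X)=3$; this is your first route.

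\emph{Case $P\in Y$.} Then $M$ contains \emph{five} points of $X$. So $X$ is not a $4$-fold collinear configuration with respect to $M$, or indeed with respect to any line, and Proposition~\ref{prop:r4-classification} does not apply. What the paper uses here is Corollary~\ref{cor:rt-s-3-fold}: the residual set $(Y\setminus\{P\})\cup\{p\}$ consists of three non-collinear points, so $\rt(X)=3$.

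Your second route, rerunning Theorem~\ref{thm:rt-residual-r-1-fold} with $B=0$, works in exactly the subcase $P\in Y$. With $L=V(z)$ and $M=V(y)$, one has $(\overline G_1,\overline G_2,\overline G_3)_3=y\,k[x,y]_2$. Hence a cubic $F\in I_Z$ cutting out $Y$ on $L$ exists if and only if $y\mid g$, that is, if and only if $P\in Y$.
\begin{itemize}
\item When $P\in Y$, the argument goes through: $F\in\fm I_Z$, and the $T_4$-coefficient of $\overline D$ is still $(T_1-T_2)T_3$.
\item When $P\notin Y$, no such $F$ exists and the basic double link is unavailable.
\end{itemize}
So the two routes are complementary, but as written each fails on half of the case. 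You need to state the dichotomy explicitly.

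Three smaller slips, none affecting the conclusions:
\begin{itemize}
\item $k[x,y]/\overline{I_Z}$ has Hilbert function $1,2,2,0$, the first difference of $1,3,5,5,\dots$, not $1,2,1,0$. Your conclusion that $\overline{I_Z}$ is full in degrees $\ge 3$ is still correct.
\item The Hilbert--Burch matrix of $I_Z$ does not have column degrees $(1,2)$. It is $3\times 2$, with quadratic entries in the $q$-row and linear entries in both cubic rows. The nonzero $T_4$-coefficient of $\overline D$ is the $2\times2$ minor coming from those two linear rows.
\item No type-by-type normalization is needed. The transversality hypothesis alone lets you take $\overline q=xy$, $\overline c_1=x^3$, $\overline c_2=y^3$ uniformly in types (iii)--(v). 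That minor is then $\pm T_1T_2$.
\end{itemize}
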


\begin{proof}
If the five points of $Z$ are collinear, then $X$ is contained in the union of two
lines, so $\rt(X)=1$ by Proposition~\ref{prop:two-collinear-blocks}.

Assume next that exactly four points of $Z$ are collinear on a line $M$, and let the
fifth point lie off $M$. Put $P:=L\cap M$ and $a:=|Y|=s-5$. If $s\geq 9$, then
$a\geq 4$, and Theorem~\ref{thm:rt-residual-r-1-fold} gives $\rt(X)=1$ exactly when
$a=4$ and $P\notin Y$, equivalently, when $s=9$ and $P\notin Y$; all other cases have
relation type $3$. If $s=8$, then $a=3$. When $P\in Y$, the line $M$ contains five
points of $X$, and the remaining three points are not collinear, so
Corollary~\ref{cor:rt-s-3-fold} gives $\rt(X)=3$. When $P\notin Y$, the line $M$
contains exactly four points of $X$, and the residual four-point block has exactly
three collinear points; hence Proposition~\ref{prop:r4-classification} gives
$\rt(X)=3$.

It remains to treat cases {\rm (iii)--(v)}. Set $a:=s-5$. Since no four points of
$Z$ are collinear, $\HF_{R/I_Z}=1,3,5,5,\ldots$, and hence
\[
I_Z=(q,c_1,c_2),\qquad \deg q=2,\quad \deg c_1=\deg c_2=3.
\]
By the transverse-conic hypothesis, after a projective change of coordinates we may
assume $L=V(z)$ and
\[
\overline q=xy,\qquad \overline c_1=x^3,\qquad \overline c_2=y^3
\quad\text{in }R/(z)\simeq k[x,y].
\]
Let $g\in k[x,y]_a$ be the reduced equation of $Y$ on $L$. Since
$(xy,x^3,y^3)_j=k[x,y]_j$ for all $j\geq 3$, we may write
\[
g=Axy+Bx^3+Cy^3,
\qquad
A\in k[x,y]_{a-2},\quad B,C\in k[x,y]_{a-3}.
\]
Set $F:=Aq+Bc_1+Cc_2$. Then $F$ cuts out $Y$ modulo $z$, and
Lemma~\ref{lem:bdl-s-r} gives
\[
I_X=zI_Z+(F)=(zq,zc_1,zc_2,F).
\]

Suppose first that $a=3$ and $(B,C)\neq (0,0)$. This is equivalent to saying that
$C_Z$ contains at most one point of $Y$. If, for instance, $B\neq 0$, then
$zF=Azq+Bzc_1+Czc_2$, so $zc_1$ is redundant; the case $C\neq 0$ is identical.
Thus $\mu(I_X)\leq 3$, and Remark~\ref{rem:ci-linear-type} gives $\rt(X)=1$.

Now assume that $a>3$, or that $a=3$ and $B=C=0$. In the latter case, $C_Z$ contains
two points of $Y$. In both cases $F\in \fm I_Z$. The four generators
$zq,zc_1,zc_2,F$ are minimal: indeed, $F\notin zI_Z$, and if some $zG_i$, with
$G_i\in\{q,c_1,c_2\}$, were redundant, reducing a relation modulo $z$ would force the
coefficient of $F$ to be divisible by $z$; after cancellation this contradicts the
minimality of $q,c_1,c_2$ in $I_Z/\fm I_Z$. Hence $\mu(I_X)=4$, so $I_X$ is not of
linear type.

It remains to show that $\rt(X)\leq 3$. By Proposition~\ref{prop:bdl-resolution-s-r},
$I_X$ has a Hilbert--Burch matrix
\[
\Phi=
\left(
\begin{array}{cc|c}
 & & A\\
 &\varphi& B\\
 & & C\\ \hline
0&0&-z
\end{array}
\right),
\]
where $\varphi$ is a Hilbert--Burch matrix of $I_Z$. With respect to the generators
$zq,zc_1,zc_2,F$, let $\mathcal L\subseteq S=R[T_0,T_1,T_2,T_3]$ be the symmetric
ideal, and let $B(\Phi)$ be a Jacobian dual matrix. Modulo $\fm S$, one may choose
$B(\Phi)$ so that
\[
\overline{B(\Phi)}=
\begin{pmatrix}
0&-T_2&H_1\\
-T_1&0&H_2\\
0&0&H_3-T_3
\end{pmatrix},
\qquad H_i\in k[T_0,T_1,T_2]_1.
\]
Thus, for $D:=\det B(\Phi)$, one has
$\overline D=T_1T_2(T_3-H_3)\neq 0$. Setting $J:=(\mathcal L,D)$, the same
Jacobian-dual linkage argument used in Theorem~\ref{thm:rt-residual-r-1-fold} gives
\[
\mathcal L:_S\fm S=J.
\]
By the saturation description of the Rees ideal of a point ideal,
$\Q=\mathcal L:\fm^\infty$, where $\Q$ is the defining ideal of $\Rees_R(I_X)$.
Since $S/J$ has no nonzero $\fm S$-torsion, it follows that $\Q=J$. Hence the Rees
ideal is generated in $T$-degrees at most $3$, and so $\rt(X)\leq 3$.

Since $I_X$ is not of linear type in the remaining cases and relation type $2$ does
not occur for finite sets of points in $\PP^2_k$, Proposition~\ref{prop:points-plane-gap}
gives $\rt(X)=3$.
\end{proof}
\begin{Remark}\label{rem:r5-tangent-conic}
The transverse hypothesis in Proposition~\ref{prop:r5-classification} is used only to
put the restriction of the quadratic generator in the normal form $\overline q=xy$.
If the conic $C_Z$ meets $L$ non-reducedly, the same proof applies with the normal form
$\overline q=x^2$. Indeed, after choosing coordinates with $L=V(z)$, one may choose
generators
\[
I_Z=(q,c_1,c_2),\qquad \deg q=2,\quad \deg c_1=\deg c_2=3,
\]
so that in $R/(z)\simeq k[x,y]$, we have $\overline q=x^2$, $\overline c_1=xy^2$ and  $\overline c_2=y^3$. 
Since
$(x^2,xy^2,y^3)_j=k[x,y]_j$ for all $j\geq 3$,
the reduced equation $g\in k[x,y]_a$ of $Y$ on $L$ can be written as
\[
g=Ax^2+Bxy^2+Cy^3,
\]
and the basic double-link construction gives
\[
I_X=zI_Z+(F),\qquad F=Aq+Bc_1+Cc_2.
\]
The rest of the proof is unchanged. In particular, when $a=3$, the case
$(B,C)\neq(0,0)$ gives a redundant generator and hence $\rt(X)=1$, while the remaining
cases have $\mu(I_X)=4$ and the same Jacobian-dual linkage argument gives
$\rt(X)=3$.
\end{Remark}

\subsection{$(s-6)$-fold collinear configurations}
\label{subsec:s-6-fold}
Set $a:=|Y|=s-6$, so $a\geq 3$. A set of six points in $\PP^2_k$ has several incidence
types; classically one may distinguish eleven such types; see \cite[Figure~3.3]{AZ}.
For relation type, these cases are naturally grouped as follows:
\begin{enumerate}
\item[\rm (i)] $Z$ is collinear;
\item[\rm (ii)] exactly five points of $Z$ are collinear;
\item[\rm (iii)] exactly four points of $Z$ are collinear;
\item[\rm (iv)] $Z$ lies on a conic and no four points of $Z$ are collinear;
\item[\rm (v)] $Z$ lies on no conic.
\end{enumerate}
The separation of case \textup{(iii)} is necessary: if exactly four points of $Z$ are
collinear, then $Z$ lies on a reducible conic, but its defining ideal need not be a
complete intersection of type $(2,3)$. Thus this case is not covered by the conic
complete-intersection argument used in case \textup{(iv)}.

We first record a useful four-generated criterion.

\begin{Lemma}\label{lem:cubic-four-generated-rt}
Let $W\subseteq \PP^2_k$ be a finite reduced set of points with ideal $I=I_W$. Assume
that $(I)_2=0$, $\dim_k(I)_3=1$, and $\mu(I)=4$. Then $\rt(W)=3$.
\end{Lemma}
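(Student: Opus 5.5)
The plan is to run the same Jacobian-dual linkage argument used in Theorem~\ref{thm:rt-residual-r-1-fold} and Proposition~\ref{prop:r5-classification}, with Lemma~\ref{lem:jacobian-dual-linkage} as the engine. The lower bound is immediate. Since $\mu(I)=4$, Remark~\ref{rem:not-linear-type-many-generators} says $I$ is not of linear type, and Proposition~\ref{prop:points-plane-gap} then gives $\rt(W)\geq 3$. So the whole task is to prove $\rt(W)\leq 3$, by showing $\Q=(\mathcal L,D)$, where $D$ is the determinant of a $3\times 3$ Jacobian dual.

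\textbf{Step 1: the Hilbert--Burch data.} Write $I=(C,G_2,G_3,G_4)$ with $\deg C=3$. Since $(I)_2=0$ and $\dim_k(I)_3=1$, we have $\deg G_i\geq 4$. Let $\Phi$ be the $4\times 3$ minimal Hilbert--Burch matrix with column degrees $e_1,e_2,e_3$, so that $\sum e_j=3+\sum\deg G_i$.

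By minimality, $I_1(\Phi)\subseteq\fm$. I would then use the hypotheses to constrain the entries: no conic through $W$ and a unique cubic. The aim is to show that the three rows attached to $G_2,G_3,G_4$ contain enough linear entries, while the row attached to $C$ has entries of degree at least $2$ (or at least contributes nothing modulo $\fm$ after reduction). In the basic case $(3,4,4,4)$ the numerics force $e_j=5$, so the $C$-row is quadratic and the other rows are linear. In the remaining degree patterns I expect the same qualitative shape. I would handle these by a short case split on $(\deg G_2,\deg G_3,\deg G_4)$, using $\sum e_j=\sum \deg$ together with the fact that $C$ is a maximal minor.

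\textbf{Step 2: the complete-intersection and linkage inputs.} Let $\mathcal L=(\mathcal L_1,\mathcal L_2,\mathcal L_3)=I_1(T\Phi)$ and choose $B(\Phi)$ with $[x\ y\ z]B(\Phi)=[\mathcal L_1\ \mathcal L_2\ \mathcal L_3]$.

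Exactly as in Theorem~\ref{thm:rt-residual-r-1-fold}, the ideal $I$ satisfies $G_3$ and has $\mu(I)=4$ at $\fm$. The Huneke--Rossi formula therefore gives $\dim S/\mathcal L=4$, so $\mathcal L$ is a complete intersection of height $3$.

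Next I would show $\overline D\neq 0$ in $k[T_1,\dots,T_4]$. Modulo $\fm S$, only the linear entries of $\Phi$ survive in $B(\Phi)$, so $\overline{B(\Phi)}$ is the Jacobian dual of the linear part of $\Phi$.

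\textbf{Step 3: conclusion from $\overline D\neq 0$.} Once $\overline D\neq0$ is known, the argument is routine:
\begin{itemize}
\item the domain argument gives $\mathcal L:(\mathcal L,D)=\fm S$;
\item Lemma~\ref{lem:jacobian-dual-linkage} gives $\mathcal L:\fm S=J:=(\mathcal L,D)$ with $S/J$ Cohen--Macaulay of dimension $4$;
\item $S/J$ then has no $\fm S$-torsion, so $\Q=\mathcal L:\fm^\infty=J$;
\item hence $\rt(W)\leq 3$.
\end{itemize}

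\textbf{Main obstacle.} The hard step is proving $\overline D\neq 0$, i.e.\ that the linear part of $\Phi$ is nondegenerate enough.

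My first attempt would be to normalize the linear $3\times 3$ block (the rows of $G_2,G_3,G_4$) using a projective change of coordinates, and then read off a nonzero monomial of $\overline D$, such as $T_2T_3T_4$, as was done with $(T_1-T_2)T_3T_4$ earlier.

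If direct normalization is messy, the fallback is a contradiction argument. Suppose $\overline D=0$. Then $(\mathcal L:\fm S)/\mathcal L\simeq (S/\fm S)(-3)$ would be generated by an element lying in $\fm S$. I would try to show this forces the $3\times3$ linear block to have a common linear syzygy, i.e.\ the linear forms in some row or column become dependent. That dependence would in turn produce either a conic through $W$ or a second independent cubic in $I$, contradicting $(I)_2=0$ or $\dim_k(I)_3=1$.
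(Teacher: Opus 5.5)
\textbf{Same reduction as the paper, but the key step fails as proposed.} Your reduction coincides with the paper's. For $\mu(I)=4$, the Morey height condition the paper invokes is exactly $\overline D\neq 0$, and your Steps 2--3 are the linkage argument of Theorem~\ref{thm:rt-residual-r-1-fold}. The gap is the step you flag, and it cannot be closed the way you propose.

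\textbf{How the paper handles it, and why that inference is incomplete.} The paper disposes of it in one line. By Hilbert--Burch, the block $M$ of rows of the three non-cubic generators has determinant $\pm C$, of degree $3$, so only linear entries of $M$ contribute. Every column of $\Phi$ has degree $\geq 5$, so the $C$-row has no linear entries. Hence $\overline{B(\Phi)}$ is the Jacobian dual of the linear part $M_1$ of $M$, and $\det M_1=\pm C\neq 0$. But $\det M_1\neq 0$ does not imply $\overline D\neq 0$. Specialize $(T_2,T_3,T_4)\mapsto\tau$. Then $\overline{B(\Phi)}$ is singular iff the three linear forms in the row $\tau M_1$ have a common zero in $\PP^2$. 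Now $C$ is squarefree (otherwise its radical, a conic or a line, lies in $I$). Using this, one checks that $\overline D=0$ happens exactly when all linear entries of $\Phi$ vanish at one point $p$. If the three non-cubic generators have equal degree (as in Lemma~\ref{lem:nine-points-maximal-HF}), then $M=M_1$, so every generator vanishes to order $\geq 2$ at $p$. That is impossible for a reduced $W=V(I)$, so in this case the argument closes. In general it does not, and your fallback claim (that $\overline D=0$ produces a conic or a second cubic) is false.

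\textbf{A configuration with $\overline D=0$.} Let $W$ consist of $2$, $4$, $6$ points on $V(x)$, $V(y)$, $V(x-y)$, none equal to $[0:0:1]$. Two basic double links (Proposition~\ref{prop:bdl-resolution-s-r}) give minimal generators $xy(x-y)$, $(x-y)yg_1$, $(x-y)G$, $F$ of degrees $3,4,5,6$, with minimal Hilbert--Burch matrix
\[
\Phi=\begin{pmatrix} g_1&A&\alpha\\ -x&z^2&\beta\\ 0&-y&\gamma\\ 0&0&-(x-y)\end{pmatrix}.
\]
Here $g_1\in k[y,z]_2$ cuts out the two points on $V(x)$. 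The form $G=xA+z^2g_1$, with $A\in k[x,z]_3$, cuts out the four points on $V(y)$. The forms $\alpha,\beta,\gamma$ have degrees $4,3,2$. All hypotheses of the lemma hold. Yet the parts of $\mathcal L_1,\mathcal L_2,\mathcal L_3$ linear in $x,y,z$ are $-xT_2$, $-yT_3$, $-(x-y)T_4$. So $\overline{B(\Phi)}$ has zero $z$-row, $\overline D=0$, and $(\mathcal L,D)\subseteq\fm S$ is strictly smaller than $\Q$.

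\textbf{The conclusion survives here by a different link.} Link against the complete intersection $(x,y,z^2)S\supseteq\mathcal L$ instead of $\fm S$; it contains $\fm^2S$. Write $[x\ y\ z^2]B'=[\mathcal L_1\ \mathcal L_2\ \mathcal L_3]$, and let $\gamma_0$ be the $z^2$-coefficient of $\gamma$. Then $\det B'\equiv\pm(T_2^2T_4-T_1T_3T_4+\gamma_0T_2T_3^2)\not\equiv 0$ modulo $\fm S$. The same linkage argument then gives $\Q=(\mathcal L,\det B')$, hence $\rt(W)=3$. So a complete proof must treat the case where the linear entries of $\Phi$ share a common zero. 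Neither your sketch nor the paper's one-line inference does this.
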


\begin{proof}
Let $f$ generate $(I)_3$, and choose minimal generators
$I=(f,f_1,f_2,f_3)$. Since $I$ is perfect of height two, it has a Hilbert--Burch
matrix $\Phi$ of size $4\times 3$. The maximal minor of the submatrix obtained by
deleting the first row is, up to sign, $f$. Hence this $3\times 3$ block has nonzero
determinant of degree $3$. Since all entries of a minimal Hilbert--Burch matrix have
positive degree, this determinant comes from the linear part of the block.

Let $S=R[T_0,T_1,T_2,T_3]$, let $\mathcal L$ be the symmetric ideal, and let
$B(\Phi)$ be a Jacobian dual matrix. The preceding nonzero linear block gives a
nonzero cubic minor of $\overline{B(\Phi)}$ modulo $\fm=(x,y,z)$. Thus
\[
\hht\frac{I_3(B(\Phi))+\fm S}{\fm S}\geq 1.
\]
Since $\mu(I)=4$ and $I$ satisfies $G_3$, Morey's criterion
\cite[Proposition~3.1]{Morey} gives
$\Q=(\mathcal L,I_3(B(\Phi)))$, where $\Q$ is the defining ideal of the Rees algebra. Hence $\rt(W)\leq 3$.
As $\mu(I)=4$, the ideal is not of linear type by
Remark~\ref{rem:not-linear-type-many-generators}; and relation type $2$ does not occur
for finite sets of points in $\PP^2_k$. Therefore $\rt(W)=3$.
\end{proof}

\begin{Proposition}\label{prop:r6-collinear-five-plus-one}
Let $X=Y\cup Z$ be an $(s-6)$-fold collinear configuration.
\begin{enumerate}
\item[\rm (i)] If the six points of $Z$ are collinear, then $\rt(X)=1$.
\item[\rm (ii)] Assume that exactly five points of $Z$ are collinear on a line $M$, and
the sixth point lies off $M$. Set $P:=L\cap M$. Then $\rt(X)=1$ if and only if
$s=11$ and $P\notin Y$. In all remaining cases, $\rt(X)=3$.
\end{enumerate}
\end{Proposition}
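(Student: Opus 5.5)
Part (i) is immediate: if the six points of $Z$ are collinear, then $X\subseteq L\cup M$ for two distinct lines. Proposition~\ref{prop:two-collinear-blocks} then gives $\rt(X)=1$.

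For (ii), I will split according to $a:=|Y|=s-6\geq 3$ relative to $r-1=5$.

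\emph{Case $a\geq 5$.} Here $Z$ is a $(6-1)$-fold collinear configuration with $r=6\geq 4$, and $a\geq r-1$. So Theorem~\ref{thm:rt-residual-r-1-fold} applies verbatim. It gives $\rt(X)=1$ exactly when $a=5$ and $P\notin Y$, i.e.\ when $s=11$ and $P\notin Y$, and $\rt(X)=3$ otherwise.

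\emph{Case $a\in\{3,4\}$.} Here $s\in\{9,10\}$ and the theorem does not apply directly. The plan is to swap the roles of the lines, taking $M$ as the distinguished line. The line $M$ contains the five collinear points of $Z$, together with $P$ if $P\in Y$. So $M$ meets $X$ in $a'\in\{5,6\}$ points. The residual set is $Z'=\{p\}\cup (Y\setminus\{P\})$, where $p$ is the sixth point of $Z$; it consists of $a$ or $a-1$ points on $L$ together with $p\notin L$.
\begin{itemize}
\item If $|Y\setminus\{P\}|\leq 2$, then $Z'$ has at most three points. When $Z'$ is collinear, Corollary~\ref{cor:sr-conic-linear-type} does not apply, because $X$ is not on a conic (two lines carry at least three points each, and $p$ lies on neither). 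When the three points of $Z'$ are non-collinear, Corollary~\ref{cor:rt-s-3-fold} gives $\rt(X)=3$.
\item If $|Y\setminus\{P\}|\geq 3$, then $Z'$ is an $(r'-1)$-fold collinear configuration on $L$, with $r'=|Z'|\in\{4,5\}$ and $a'\geq r'-1$. Theorem~\ref{thm:rt-residual-r-1-fold} then yields $\rt(X)=1$ only if $a'=r'-1$ and $L\cap M=P\notin$ (the block on $M$). With $a'\geq 5$ and $r'\leq 5$, equality $a'=r'-1$ is impossible. Hence $\rt(X)=3$.
\end{itemize}
In every subcase of $a\in\{3,4\}$ we therefore get $\rt(X)=3$, which is consistent with the statement, since $s\neq 11$.

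The main obstacle is the bookkeeping when $P\in Y$ and $a$ is small. One must check that the swapped configuration really meets the hypotheses of Theorem~\ref{thm:rt-residual-r-1-fold}: $r'\geq 4$, the correct count of points on $M$, and exactly one point off $L$. The boundary cases $a=3$ with $P\in Y$ and $a=4$ with $P\in Y$ also need checking. For example, $a=3$ with $P\in Y$ gives $a'=6$ and $Z'$ equal to two points on $L$ plus $p$, which are non-collinear, so Corollary~\ref{cor:rt-s-3-fold} applies. If some residual case escapes both results, I would fall back on Lemma~\ref{lem:cubic-four-generated-rt}. For that I would check that $(I_X)_2=0$, which holds since $X$ lies on no conic, that $\dim(I_X)_3=1$, spanned by the cubic $L\cdot M\cdot(\text{line through }p)$, and that $\mu(I_X)=4$.
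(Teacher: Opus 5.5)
Your proof is correct and is essentially the paper's. For $a\geq 5$ you use Theorem~\ref{thm:rt-residual-r-1-fold}, and for $a\in\{3,4\}$ you make the same swap to $M$ as distinguished line. You then use Corollary~\ref{cor:rt-s-3-fold} when $a=3$ and $P\in Y$, and in the other three subcases you invoke Theorem~\ref{thm:rt-residual-r-1-fold} directly where the paper cites its specializations, Propositions~\ref{prop:r4-classification} and~\ref{prop:r5-classification}.

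The fallback via Lemma~\ref{lem:cubic-four-generated-rt} is never needed, and it would not have worked anyway. Writing $p$ for the sixth point of $Z$, $(I_X)_3$ contains $LM\cdot(I_p)_1$, so $\dim_k(I_X)_3\geq 2$ rather than $1$.
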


\begin{proof}
Part {\rm (i)} follows from Proposition~\ref{prop:two-collinear-blocks}. For {\rm (ii)},
set $a=s-6$. If $a\geq 5$, then Theorem~\ref{thm:rt-residual-r-1-fold}, applied with
$r=6$, gives $\rt(X)=1$ exactly when $a=5$ and $P\notin Y$, equivalently when
$s=11$ and $P\notin Y$; all other cases with $a\geq 5$ have relation type $3$.

It remains to treat $a=3,4$. If $a=3$ and $P\in Y$, then $M$ contains six points of
$X$, and the remaining three points are not collinear; hence
Corollary~\ref{cor:rt-s-3-fold} gives $\rt(X)=3$. If $a=3$ and $P\notin Y$, then
$M$ contains five points of $X$, and the residual four-point block consists of the
three points of $Y$ on $L$ together with the point of $Z$ off $M$. This block has
exactly three collinear points, and since $s=9>7$,
Proposition~\ref{prop:r4-classification} gives $\rt(X)=3$.

If $a=4$, the same reduction gives an $(s-4)$-fold or an $(s-5)$-fold collinear
configuration, according as $P\in Y$ or $P\notin Y$. In the first case
Proposition~\ref{prop:r4-classification} gives $\rt(X)=3$, and in the second case
Proposition~\ref{prop:r5-classification} gives $\rt(X)=3$.
\end{proof}

\begin{Proposition}\label{prop:r6-four-collinear}
Assume that exactly four points of $Z$ are collinear on a line $M$. Let $N$ be the line
through the remaining two points of $Z$, and set $P:=L\cap M$.
\begin{enumerate}
\item[\rm (i)] If $s=9$ and $P\in Y$, set
\[
W:=(Y\setminus\{P\})\cup (Z\setminus M).
\]
Then $\rt(X)=1$ if no three points of $W$ are collinear, and $\rt(X)=3$ if exactly
three points of $W$ are collinear.
\item[\rm (ii)] In all other cases, $X$ has no quadric equation and has a unique cubic
equation, namely the reducible cubic $LMN$. Consequently $\mu(I_X)\leq 4$, and
\[
\rt(X)=1 \Longleftrightarrow \mu(I_X)\leq 3,
\qquad
\rt(X)=3 \Longleftrightarrow \mu(I_X)=4.
\]
\end{enumerate}
\end{Proposition}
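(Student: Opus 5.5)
Case (i) reduces to a smaller configuration with a new distinguished line, and case (ii) is a Hilbert-function count followed by the four-generated criterion of Lemma~\ref{lem:cubic-four-generated-rt}.

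\emph{Case (i).} Assume $s=9$ and $P\in Y$, so $a=3$. The line $M$ then contains the four points of $Z\cap M$ together with $P$, that is, exactly five points of $X$. Hence $X$ is a $(9-4)$-fold collinear configuration with distinguished line $M$ and residual block $W$, where $|W|=4$: two points of $Y\setminus\{P\}$ on $L$ and the two points of $Z\setminus M$ on $N$. We apply Proposition~\ref{prop:r4-classification} with $s=9>7$.
\begin{itemize}
\item If no three points of $W$ are collinear, case (i)(b) gives $\rt(X)=1$.
\item If exactly three points of $W$ are collinear, then $s>7$ and part (ii) gives $\rt(X)=3$.
\item All four points of $W$ cannot be collinear: $W$ has only two points on $L$, and $N\neq L$ since $Z\cap L=\emptyset$, so $W$ is not contained in a single line.
\end{itemize}

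\emph{Case (ii), first step.} First, $X$ has no quadric. By the hypotheses $|Y|\geq 3$ and $|M\cap X|\geq 4$, any conic through $X$ contains $L$ and $M$ by Bézout, so it equals $LM$. Then the points of $Z\setminus M$ lie on neither line, a contradiction.

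Next, any cubic $C$ through $X$ must contain $M$, as soon as $M$ carries at least $4$ points of $X$, which holds. The residual conic $C/M$ passes through $(Y\setminus\{P\})\cup(Z\setminus M)$, a set of at least $a-1+2$ points.
\begin{itemize}
\item If the points of $Y\setminus\{P\}$ on $L$ number at least $3$, this conic contains $L$. The remaining factor is a line through the two points of $Z\setminus M$, namely $N$, so $C=LMN$.
\item The count fails only when $a=3$ and $P\in Y$, i.e.\ $s=9$, which is case (i). For instance, with $a\geq 4$, or with $P\notin Y$ and $a\geq 3$, three points of $Y$ on $L$ remain.
\end{itemize}
When $P\notin Y$ and $a=3$, the three points of $Y$ all avoid $M$, so the conic again contains $L$. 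Hence $\dim(I_X)_3=1$.

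\emph{Case (ii), second step.} Next, bound $\mu(I_X)\leq 4$. I would use Burch's bound \cite[Corollary~3.8]{EisSyz}: $X$ lies on the cubic $LMN$, so $\mu(I_X)\leq 4$. Then:
\begin{itemize}
\item If $\mu(I_X)\leq 3$, Remark~\ref{rem:ci-linear-type} gives $\rt(X)=1$.
\item If $\mu(I_X)=4$, Lemma~\ref{lem:cubic-four-generated-rt} gives $\rt(X)=3$, since $(I_X)_2=0$ and $\dim(I_X)_3=1$.
\end{itemize}
Conversely, $\rt(X)=1$ forces $\mu(I_X)\leq 3$ by Remark~\ref{rem:not-linear-type-many-generators}, which yields both equivalences.

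The main obstacle is the Bézout bookkeeping for the cubics, especially in the boundary situations $a=3$ and $P\in Y$ versus $P\notin Y$. There one must check that the residual conic is still forced to contain $L$, and that every configuration outside (i) is genuinely covered.
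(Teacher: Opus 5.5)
Your proposal is correct and follows essentially the same route as the paper. In (i) you view $X$ as a $(9-4)$-fold collinear configuration with distinguished line $M$ and residual block $W$, and invoke Proposition~\ref{prop:r4-classification}. In (ii) you use Bézout to get $(I_X)_2=0$ and $(I_X)_3=k\cdot LMN$, then apply Burch's bound and Lemma~\ref{lem:cubic-four-generated-rt}. Your no-conic argument is slightly cleaner than the paper's: a conic through $X$ must contain both $L$ and $M$, hence equals $LM$, which misses $Z\setminus M$. Your check that $W$ cannot be collinear fills in a small point the paper leaves implicit.
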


\begin{proof}
Assume first that $s=9$ and $P\in Y$. Then $M$ contains five points of $X$, so $X$ is
an $(s-4)$-fold collinear configuration with distinguished line $M$ and residual block
$W$. Since $|W|=4$, Proposition~\ref{prop:r4-classification} gives the assertion.

Now assume that we are not in this exceptional case. There is no quadric through $X$:
indeed, a quadric through the $a\geq 3$ points of $Y$ on $L$ either contains $L$ or,
when $a=3$, would still have to contain the line $M$ because it contains four points
of $Z$ on $M$; in either case the remaining component cannot contain all of $Z$.

Let $C$ be a cubic through $X$. Since $C$ contains four points of $Z$ on $M$, it
contains $M$. If $a\geq 4$, then $C$ also contains $L$, and the remaining linear factor
must be $N$. If $a=3$ and $P\notin Y$, then after removing the factor $M$, the residual
conic contains the three points of $Y$ on $L$, hence contains $L$, and again the
remaining factor is $N$. Thus $(I_X)_3=k\cdot LMN$.

By Burch's theorem, since $X$ lies on the cubic $LMN$, one has $\mu(I_X)\leq 4$. If
$\mu(I_X)\leq 3$, then $I_X$ is either a complete intersection or a strict almost
complete intersection, and Remark~\ref{rem:ci-linear-type} gives $\rt(X)=1$. If
$\mu(I_X)=4$, Lemma~\ref{lem:cubic-four-generated-rt} gives $\rt(X)=3$.
\end{proof}

\begin{Proposition}\label{prop:r6-conic-residual}
Let $X=Y\cup Z$ be an $(s-6)$-fold collinear configuration with $a=s-6\geq 4$.
Assume that $Z$ lies on a conic and that no four points of $Z$ are collinear. Then
$\rt(X)=1$.
\end{Proposition}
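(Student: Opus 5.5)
The plan is to imitate the case of Proposition~\ref{prop:r4-classification} where no three residual points are collinear, with the pair of conics replaced by a complete intersection of type $(2,3)$. The idea is to show that $I_X$ needs at most three generators, after which Remark~\ref{rem:ci-linear-type} finishes the proof.

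\textbf{Step 1: $I_Z$ is a complete intersection of type $(2,3)$.} First I show that $Z$ lies on a \emph{unique} conic $C_Z=V(q)$. If two distinct conics both contained the six points of $Z$, they would share a common component by B\'ezout, since $6>4$. Neither can be a double line, since then all six points would lie on one line. So the shared component is a line $N'$, and the residual lines of the two conics are distinct. The points of $Z$ off $N'$ must lie on both residual lines, so there is at most one such point. Then at least five points of $Z$ lie on $N'$, contradicting the hypothesis that no four points of $Z$ are collinear. Hence $\dim_k(I_Z)_2=1$. Next I choose a cubic $c\in(I_Z)_3$ with $q\nmid c$. Such a cubic exists because $\dim_k (I_Z)_3 \geq 10-6=4>3=\dim_k(q R_1)$. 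If $q$ and $c$ had a common component, that component would be a line $N'$ of $q$. The residual of $c$ is then a conic through the points of $Z$ off $N'$, and I would adjust the choice of $c$ inside $(I_Z)_3$ to avoid this. Once $q$ and $c$ have no common component, $V(q,c)$ is a length-$6$ complete intersection containing $Z$. Hence $I_Z=(q,c)$, since a complete intersection is saturated and $|Z|=6=2\cdot 3$. This step is the main obstacle: making the choice of $c$ rigorous, including the reducible-conic subcases. An alternative route is a Hilbert-function count, $\HF_{R/I_Z}=1,3,5,6,6,\ldots$, which holds because no four points are collinear, together with Burch's theorem giving $\mu(I_Z)\le 3$.

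\textbf{Step 2: solving for $F$ on $L$.} Take $L=V(z)$. Since $Z\cap L=\emptyset$, the restrictions $\overline q,\overline c\in k[x,y]$ have no common zero. So they form a regular sequence, and $k[x,y]/(\overline q,\overline c)$ has Hilbert function $1,2,2,1$. It follows that $(\overline q,\overline c)_j=k[x,y]_j$ for all $j\ge 4$. Because $a\ge 4$, the reduced equation $g\in k[x,y]_a$ of $Y$ can be written as $g=\overline A\,\overline q+\overline B\,\overline c$. Lifting $\overline A$ and $\overline B$ to forms $A,B\in R$, I set $F=Aq+Bc$. Lemma~\ref{lem:bdl-s-r} then gives
\[
I_X=zI_Z+(F)=(zq,zc,F).
\]

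\textbf{Step 3: conclusion.} By Step 2, $I_X$ is generated by at most three elements and has height two. So it is either a complete intersection or a strict almost complete intersection. It is locally a complete intersection on the punctured spectrum, as noted in Section~\ref{sec:rt-geometry-points}. Remark~\ref{rem:ci-linear-type} therefore gives $\rt(X)=1$.
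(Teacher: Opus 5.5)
Your proposal is correct and follows the paper's proof essentially verbatim: write $I_Z=(q,c)$ as a complete intersection of type $(2,3)$, observe that $\overline q,\overline c$ form a regular sequence on $L$ and so fill $k[x,y]_j$ for $j\ge 4$, lift $g$ to $F$, obtain $I_X=(zq,zc,F)$ by the basic double link, and conclude with Remark~\ref{rem:ci-linear-type}. The paper simply asserts the complete-intersection claim that you work through in Step~1, and your vague ``adjust the choice of $c$'' is easy to make rigorous: the cubics in $(I_Z)_3$ divisible by a line component $n$ of $q$ form the subspace $n\,(I_{Z\setminus V(n)})_2$, which has dimension at most $3<\dim_k(I_Z)_3$ because $Z\setminus V(n)$ has at least three points, so a general $c$ avoids both such subspaces.
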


\begin{proof}
Under these assumptions, $I_Z$ is a complete intersection of type $(2,3)$. Write
$I_Z=(q,c)$ with $\deg q=2$ and $\deg c=3$. After a projective change of coordinates,
assume $L=V(z)$. Since $Z\cap L=\emptyset$, the images
$\overline q,\overline c\in k[x,y]$ have no common zero on $L\simeq\PP^1$. Hence
they form a regular sequence of degrees $2$ and $3$ in $k[x,y]$, and therefore
\[
(\overline q,\overline c)_j=k[x,y]_j\qquad\text{for all }j\geq 4.
\]
Let $g\in k[x,y]_a$ be the reduced equation of $Y$ on $L$. Since $a\geq 4$, there is
$F\in(I_Z)_a$ whose image modulo $z$ is $g$. By Lemma~\ref{lem:bdl-s-r},
\[
I_X=zI_Z+(F)=(zq,zc,F).
\]
Thus $I_X$ is generated by at most three elements. Hence
Remark~\ref{rem:ci-linear-type} gives $\rt(X)=1$.
\end{proof}

\begin{Proposition}\label{prop:r6-generic-residual}
Let $X=Y\cup Z$ be an $(s-6)$-fold collinear configuration with $a=s-6\geq 4$.
Assume that $Z$ lies on no conic. Then $\rt(X)=3$.
\end{Proposition}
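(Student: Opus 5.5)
Six points on no conic have generic Hilbert function $1,3,6,6,\ldots$, so $(I_Z)_2=0$ and $\dim_k(I_Z)_3=4$. Hence $Z$ is a $3$-triangular residual block in generic position in the sense of Definition~\ref{def:triangular-residual-block}: $|Z|=6=\binom{4}{2}$ and $(I_Z)_{2}=0$. The statement should therefore follow from Theorem~\ref{thm:rt-triangular-block} with $d=3$, which requires $a\geq d+1=4$. That is exactly our hypothesis $a=s-6\geq 4$.

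The key steps, in order, are these. First, verify that a six-point set lying on no conic has $\HF_{R/I_Z}(2)=6$. This holds because the evaluation map $R_2\to k^6$ is injective exactly when no conic passes through $Z$, and both spaces have dimension $6$, so the map is bijective. The values in degrees $\geq 3$ are then $6$ automatically, since the Hilbert function of a reduced point set is nondecreasing and bounded by $|Z|$. This gives generic position. Second, invoke Lemma~\ref{lem:triangular-block} with $d=3$. It gives $I_X=(zG_0,\ldots,zG_3,F)$ with $\mu(I_X)=5$, together with the basic double-link Hilbert--Burch matrix. Third, apply Theorem~\ref{thm:rt-triangular-block} directly. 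For $a=4$ the presentation is linear and Proposition~\ref{prop:points-plane-gap} applies. For $a\geq5$ the proof of that theorem uses the catalecticant and Morey argument.

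The only point needing care is that $Z$ is in generic position in the full sense used in the proof of Theorem~\ref{thm:rt-triangular-block}. That proof uses the linear resolution $0\to R(-4)^3\to R(-3)^4\to I_Z\to 0$ from Example~\ref{ex:triangular-number-general-points}, and this resolution needs only $(I_Z)_{2}=0$ and $|Z|=6$. That proof also uses that the reduction $(I_Z)_3\to k[x,y]_3$ is an isomorphism. This in turn follows because $z$ is a nonzerodivisor modulo $I_Z$ (as $Z\cap L=\emptyset$) together with $(I_Z)_2=0$. No collinearity restriction on $Z$ beyond lying on no conic is needed, since four collinear points would already force $Z$ onto a conic: the line through them together with the line through the remaining two points. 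So I expect no real obstacle beyond checking these hypotheses.
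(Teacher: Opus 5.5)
Your proposal is correct and is the paper's proof. The paper observes that $(I_Z)_2=0$, so $Z$ is a $3$-triangular residual block in generic position, and then applies Theorem~\ref{thm:rt-triangular-block} with $d=3$, whose hypothesis $a\geq d+1=4$ is exactly $a\geq 4$. Your additional checks (the Hilbert function $1,3,6,6,\ldots$, the isomorphism $(I_Z)_3\to k[x,y]_3$, and the remark that four collinear points would put $Z$ on a conic) are accurate, but they are already covered by that theorem's hypotheses and proof.
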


\begin{proof}
Since $|Z|=6=\binom{4}{2}$ and $Z$ lies on no conic, we have $(I_Z)_2=0$. Thus $Z$ is
a $3$-triangular residual block in generic position. Since $a\geq 4$, the result follows
from Theorem~\ref{thm:rt-triangular-block}.
\end{proof}

Combining the preceding results gives the classification for $s\geq 10$.

\begin{Proposition}\label{prop:r6-classification-s-geq10}
Let $X=Y\cup Z$ be an $(s-6)$-fold collinear configuration with $s\geq 10$. Then:
\begin{enumerate}
\item[\rm (i)] $\rt(X)=1$ if and only if one of the following holds:
\begin{enumerate}
\item[\rm (a)] the six points of $Z$ are collinear;
\item[\rm (b)] exactly five points of $Z$ are collinear on a line $M$, and, with
$P=L\cap M$, one has $s=11$ and $P\notin Y$;
\item[\rm (c)] exactly four points of $Z$ are collinear and $\mu(I_X)\leq 3$;
\item[\rm (d)] no four points of $Z$ are collinear and $Z$ lies on a conic.
\end{enumerate}
\item[\rm (ii)] $\rt(X)=3$ if and only if one of the following holds:
\begin{enumerate}
\item[\rm (a)] $Z$ lies on no conic;
\item[\rm (b)] exactly five points of $Z$ are collinear on a line $M$, and either
$s\neq 11$, or $s=11$ and $L\cap M\in Y$;
\item[\rm (c)] exactly four points of $Z$ are collinear and $\mu(I_X)=4$.
\end{enumerate}
\end{enumerate}
\end{Proposition}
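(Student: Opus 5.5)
The plan is to treat this as a bookkeeping statement: I would sort the residual block $Z$ into the five incidence types (i)--(v) listed at the start of Subsection~\ref{subsec:s-6-fold} and apply the matching earlier proposition in each case. The hypothesis $s\geq 10$ gives $a=s-6\geq 4$. This is exactly what Propositions~\ref{prop:r6-conic-residual} and~\ref{prop:r6-generic-residual} require. It also excludes the exceptional case $s=9$, $P\in Y$ of Proposition~\ref{prop:r6-four-collinear}(i).

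The cases go as follows.
\begin{enumerate}
\item[(1)] If $Z$ is collinear, Proposition~\ref{prop:r6-collinear-five-plus-one}(i) gives $\rt(X)=1$. This is (i)(a).
\item[(2)] If exactly five points of $Z$ are collinear on $M$, Proposition~\ref{prop:r6-collinear-five-plus-one}(ii) gives $\rt(X)=1$ exactly when $s=11$ and $P\notin Y$, and $\rt(X)=3$ otherwise. This yields (i)(b) and (ii)(b).
\item[(3)] If exactly four points of $Z$ are collinear, then since $s\geq 10$ we are in Proposition~\ref{prop:r6-four-collinear}(ii). That result gives $\rt(X)=1$ if and only if $\mu(I_X)\leq 3$, and $\rt(X)=3$ if and only if $\mu(I_X)=4$. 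This yields (i)(c) and (ii)(c).
\item[(4)] If no four points of $Z$ are collinear and $Z$ lies on a conic, Proposition~\ref{prop:r6-conic-residual} gives $\rt(X)=1$. This is (i)(d).
\item[(5)] If $Z$ lies on no conic, Proposition~\ref{prop:r6-generic-residual} gives $\rt(X)=3$. This is (ii)(a).
\end{enumerate}

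To get the ``if and only if'' statements, I would check that the five incidence types are mutually exclusive and exhaustive. Exhaustiveness needs one observation. If some four points of $Z$ are collinear, then $Z$ lies on the union of that line with the line through the other two points, which is a conic. So ``lies on no conic'' automatically means no four points of $Z$ are collinear, and the types cover every possibility. Since $\rt(X)$ takes exactly one value, the conditions in (i) are then pairwise disjoint and exhaust $\rt(X)=1$, and the complementary conditions in (ii) exhaust $\rt(X)=3$. In particular every such configuration has $\rt(X)\in\{1,3\}$.

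The only point needing care is the boundary check in case (3). For $s\geq 10$ we have $a\geq 4$, so we are never in the situation $s=9$, $P\in Y$, and Proposition~\ref{prop:r6-four-collinear}(ii) applies without exception. For $a\geq 4$ the cubic $LMN$ argument in that proof goes through directly. Beyond this, no new computation is needed.
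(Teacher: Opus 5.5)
Your proposal is correct and is essentially the paper's proof. It uses the same split by incidence type of $Z$, with Propositions~\ref{prop:r6-collinear-five-plus-one}, \ref{prop:r6-four-collinear}, \ref{prop:r6-conic-residual} and \ref{prop:r6-generic-residual} covering the cases, and with $s\geq 10$ (so $a\geq 4$) excluding the exceptional case $s=9$, $P\in Y$. The one point the paper states explicitly and you leave implicit is that Proposition~\ref{prop:r6-four-collinear}(ii) gives $\mu(I_X)\leq 4$, which is what makes the alternatives $\mu(I_X)\leq 3$ and $\mu(I_X)=4$ exhaustive in your case (3).
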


\begin{proof}
The result follows from Propositions~\ref{prop:r6-collinear-five-plus-one},
\ref{prop:r6-four-collinear}, \ref{prop:r6-conic-residual}, and
\ref{prop:r6-generic-residual}. In the case of exactly four collinear points,
Proposition~\ref{prop:r6-four-collinear} gives $\mu(I_X)\leq 4$, so the alternatives
$\mu(I_X)\leq 3$ and $\mu(I_X)=4$ exhaust all possibilities.
\end{proof}

It remains to treat the boundary case $s=9$, equivalently $a=3$. After a projective
change of coordinates, assume $L=V(z)$, and let $g\in k[x,y]_3$ be the reduced equation
of $Y$ on $L$.

\begin{Lemma}\label{lem:nine-points-maximal-HF}
Let $W\subseteq \PP^2_k$ be a reduced set of nine points with Hilbert function
\[
\HF_{R/I_W}=1,\ 3,\ 6,\ 9,\ 9,\ldots .
\]
Then $\rt(W)=3$.
\end{Lemma}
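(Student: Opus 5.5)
The plan is to reduce to Lemma~\ref{lem:cubic-four-generated-rt}. Its hypotheses are that $(I_W)_2=0$, $\dim_k(I_W)_3=1$, and $\mu(I_W)=4$.

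First I read off the low-degree data from the Hilbert function. Since $\HF_{R/I_W}(2)=6=\dim_k R_2$, we get $(I_W)_2=0$. Since $\HF_{R/I_W}(3)=9$ and $\dim_k R_3=10$, we get $\dim_k(I_W)_3=1$. So $I_W$ contains a unique cubic $f$, up to scalar.

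The remaining step, and the only real one, is to show $\mu(I_W)=4$.

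\emph{Regularity.} The Hilbert function of $R/I_W$ reaches its final value $9$ in degree $3$. Since $R/I_W$ is a one-dimensional Cohen--Macaulay ring, this gives $\reg(R/I_W)=3$, and hence $\reg(I_W)=4$. Consequently $I_W$ is generated in degrees $\leq 4$, and the Hilbert--Burch syzygies live in degrees $\leq 5$.

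\emph{Counting quartic generators.} In degree $4$ we have $\dim_k(I_W)_4=15-9=6$. The subspace $R_1f$ has dimension $3$, so exactly three new minimal quartic generators are needed. Thus $\mu(I_W)=1+3=4$, with generator degrees $(3,4,4,4)$.

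\emph{Consistency check.} The $h$-vector of $R/I_W$ is $(1,2,3,3)$. Multiplying $1+2t+3t^2+3t^3$ by $(1-t)^2$ gives
\[
1-t^3-3t^4+3t^5.
\]
This matches a resolution
\[
0\lar R(-5)^3\lar R(-3)\oplus R(-4)^3\lar I_W\lar 0,
\]
and the regularity bound rules out any cancelling ghost terms.

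With $(I_W)_2=0$, $\dim_k(I_W)_3=1$ and $\mu(I_W)=4$ established, Lemma~\ref{lem:cubic-four-generated-rt} applies directly and yields $\rt(W)=3$.

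The main point needing care is the generator count, that is, excluding minimal generators in degrees $\geq 5$. I would handle this through the Castelnuovo--Mumford regularity bound as above. Alternatively, one can observe that a minimal generator of degree $5$ paired with degree-$5$ syzygies would force a zero row in the minimal Hilbert--Burch matrix, which is impossible because the maximal minors of that matrix generate $I_W$.
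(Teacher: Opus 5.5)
Your proposal is correct and follows the paper's proof. Both read off one cubic and three quartic minimal generators from the Hilbert function, giving the resolution $0\lar R(-5)^3\lar R(-3)\oplus R(-4)^3\lar I_W\lar 0$, and then conclude via the Jacobian-dual/Morey argument of Lemma~\ref{lem:cubic-four-generated-rt}, whose three hypotheses you verify. Your regularity argument makes explicit the absence of minimal generators in degree $\geq 5$, which the paper leaves implicit.
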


\begin{proof}
The Hilbert function gives one cubic generator and three new quartic generators. Thus
$I_W$ has a Hilbert--Burch resolution
\[
0\lar R(-5)^3\lar R(-3)\oplus R(-4)^3\lar I_W\lar 0.
\]
The determinant of the $3\times 3$ block of linear entries is the cubic generator and
is nonzero. Hence, as in Lemma~\ref{lem:cubic-four-generated-rt}, Morey's criterion
gives $\rt(W)\leq 3$. Since $\mu(I_W)=4$, the ideal is not of linear type, and
Proposition~\ref{prop:points-plane-gap} gives $\rt(W)=3$.
\end{proof}

\begin{Proposition}\label{prop:r6-s9-classification}
Let $X=Y\cup Z$ be an $(s-6)$-fold collinear configuration with $s=9$. Then:
\begin{enumerate}
\item[\rm (i)] if the six points of $Z$ are collinear, then $\rt(X)=1$;
\item[\rm (ii)] if exactly five points of $Z$ are collinear, then $\rt(X)=3$;
\item[\rm (iii)] if exactly four points of $Z$ are collinear on a line $M$, then, with
$P=L\cap M$, the following hold:
\begin{enumerate}
\item[\rm (a)] if $P\notin Y$, then $\rt(X)=3$;
\item[\rm (b)] if $P\in Y$ and
$W:=(Y\setminus\{P\})\cup (Z\setminus M)$, then $\rt(X)=1$ if no three points of $W$
are collinear, and $\rt(X)=3$ if exactly three points of $W$ are collinear.
\end{enumerate}
\item[\rm (iv)] assume that no four points of $Z$ are collinear and that $Z$ lies on a
conic. Write $I_Z=(q,c)$, with $\deg q=2$ and $\deg c=3$. Then
\[
\rt(X)=1
\Longleftrightarrow
g\in(\overline q,\overline c)_3\subseteq k[x,y]_3.
\]
If $g\notin(\overline q,\overline c)_3$, then $\rt(X)=3$.
\item[\rm (v)] if $Z$ lies on no conic, then $\rt(X)=3$.
\end{enumerate}
\end{Proposition}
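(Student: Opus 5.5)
The plan is to go through the five incidence types of $Z$ in turn, with $a=3$ and $L=V(z)$ throughout. Most cases reduce to earlier results; the genuinely new work is case (iv).

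\textbf{Case (i).} This is Proposition~\ref{prop:two-collinear-blocks}.

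\textbf{Case (ii).} This is Proposition~\ref{prop:r6-collinear-five-plus-one}(ii). Since $s=9\neq 11$, that result gives $\rt(X)=3$.

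\textbf{Case (iii)(b).} This is Proposition~\ref{prop:r6-four-collinear}(i).

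\textbf{Case (iii)(a).} Here I am in the non-exceptional range of Proposition~\ref{prop:r6-four-collinear}(ii). It remains to show $\mu(I_X)=4$. The ideal has no quadric and a unique cubic $LMN$. Since $s=9$ and $\HF_{R/I_X}(3)=9$, the Hilbert function is $1,3,6,9,9,\dots$. Then Lemma~\ref{lem:nine-points-maximal-HF} applies directly and gives $\rt(X)=3$.

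\textbf{Case (v).} Here $(I_Z)_2=0$, so $Z$ is a $3$-triangular block with $I_Z$ generated by four cubics. A cubic through $X$ must contain $L$ or meet $L$ in the three points of $Y$. I would argue that $(I_X)_2=0$: a conic through $X$ would contain $L$ (three collinear points), and then $Z$ would lie on a line. Next I check $\dim(I_X)_3\le 1$ and $\HF_{R/I_X}(3)=9$. The cubics in $zI_Z$ have degree at least $4$. Since the reduction $(I_Z)_3\to k[x,y]_3$ is an isomorphism (proof of Lemma~\ref{lem:triangular-block}), there is exactly one cubic $F\in I_Z$ restricting to $g$. So $(I_X)_3=kF$ and the Hilbert function is $1,3,6,9,9,\dots$ again, and Lemma~\ref{lem:nine-points-maximal-HF} gives $\rt(X)=3$. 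This is consistent with Remark~\ref{rem:triangular-block-small-a}, where $a=d$ produces a cancellation.

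\textbf{Case (iv): the criterion.} Write $I_Z=(q,c)$. By Lemma~\ref{lem:bdl-s-r} I need some $F\in (I_Z)_3$ with $\overline F=g$ up to scalar. Since $z$ is a nonzerodivisor mod $I_Z$, the map $(I_Z)_3\to k[x,y]_3$ is injective, with image $(\overline q,\overline c)_3$ of dimension $3$.

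If $g\in(\overline q,\overline c)_3$, write $F=\ell_0 q+\lambda c$. Then $I_X=(zq,zc,F)$.
\begin{itemize}
\item If $\lambda\neq0$, then $zc$ is redundant, so $\mu(I_X)\le 3$ and $\rt(X)=1$ by Remark~\ref{rem:ci-linear-type}.
\item If $\lambda=0$, then $F=\ell_0q$ and $I_X\supseteq(zq,\ell_0q)$. That would put $X$ on the conic $q$ union a line, and then $Y\subseteq V(q)\cap L$ forces at most two points, a contradiction.
\end{itemize}
So $\lambda\ne0$ automatically, and this direction is done.

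\textbf{Case (iv): the converse.} Suppose $g\notin(\overline q,\overline c)_3$. Then $X$ lies on no cubic other than those containing $L$, i.e.\ $z\cdot(I_Z)_2=kzq$, and $zq$ does contain $X$. So $\dim(I_X)_3=1$ with generator $zq$, and there is no conic through $X$ because of $Y$ and Proposition~\ref{prop:sr-conic-characterization}. Hence the Hilbert function is $1,3,6,8,9,\dots$ or $1,3,6,9,9,\dots$; I must determine which.

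In degree $4$ the candidates are $z(I_Z)_3$, of dimension $4$ (namely $zxq,zyq,z^2q,zc$), together with forms $F\in(I_Z)_4$ restricting to $gh$. Since $(\overline q,\overline c)_4=k[x,y]_4$, these exist, so $\HF(3)=9$ and $\HF=1,3,6,9,9,\dots$. Lemma~\ref{lem:nine-points-maximal-HF} then gives $\rt(X)=3$.

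\textbf{Main obstacle.} The delicate point is this degree-$3$ count in case (iv): verifying that $(I_X)_3$ is exactly $kzq$ and not larger. If the dimension comparison $\dim(I_X)_3=10-9$ fails for a special $Y$, I would fall back on Lemma~\ref{lem:cubic-four-generated-rt} together with Burch's bound, $\mu(I_X)\le 4$ from the cubic $zq$.
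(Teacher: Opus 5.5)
Your proof is correct and follows the paper's route. Cases (i), (ii) and (iii)(b) go by citation. Case (iv) uses $I_X=(zq,zc,F)$ in one direction and $(I_X)_3=k\cdot zq$ together with Lemma~\ref{lem:nine-points-maximal-HF} in the other. Case (v) uses the isomorphism $(I_Z)_3\to k[x,y]_3$.

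In (iii)(a) your proof is more complete than the paper's. The paper only appeals to Proposition~\ref{prop:r6-four-collinear}, which gives the dichotomy $\mu(I_X)\le 3$ versus $\mu(I_X)=4$ but does not say which occurs. Your observation that $(I_X)_3=k\cdot LMN$ with $s=9$ forces $\HF_{R/I_X}=1,3,6,9,9,\ldots$ actually settles the case.

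Some side claims in (iv) are wrong but harmless:
\begin{itemize}
\item The map $(I_Z)_3\to k[x,y]_3$ is not injective; its kernel is $k\cdot zq$.
\item The case $\lambda=0$ is not contradictory. It occurs exactly when $C_Z$ meets $L$ in two points of $Y$ and $\ell_0$ passes through the third. Either way $(zq,zc,F)$ has three generators, so your conclusion stands.
\item $\dim_k(I_X)_3=1$ already gives $\HF_{R/I_X}(3)=9$. So the degree-$4$ detour and the ``main obstacle'' you flag are unnecessary.
\end{itemize}
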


\begin{proof}
Parts {\rm (i)} and {\rm (ii)} follow from
Proposition~\ref{prop:r6-collinear-five-plus-one}. Part {\rm (iii)} follows from
Proposition~\ref{prop:r6-four-collinear}.

For {\rm (iv)}, since no four points of $Z$ are collinear and $Z$ lies on a conic,
$I_Z=(q,c)$ is a complete intersection of type $(2,3)$. If
$g\in(\overline q,\overline c)_3$, there exists $F\in(I_Z)_3$ whose image modulo $z$
is $g$, and Lemma~\ref{lem:bdl-s-r} gives
\[
I_X=zI_Z+(F)=(zq,zc,F).
\]
Hence $\rt(X)=1$ by Remark~\ref{rem:ci-linear-type}.

Conversely, assume that $g\notin(\overline q,\overline c)_3$. If $H\in(I_X)_3$, then
$\overline H$ vanishes on $Y$, so $\overline H$ is a scalar multiple of $g$. Since
$H\in(I_Z)_3$, one also has $\overline H\in(\overline q,\overline c)_3$, and hence
$\overline H=0$. Thus $H=zQ$ with $Q\in(I_Z)_2=(q)$, so
$(I_X)_3=k\cdot zq$. There is no quadric through $X$, and therefore
\[
\HF_{R/I_X}=1,\ 3,\ 6,\ 9,\ 9,\ldots .
\]
Lemma~\ref{lem:nine-points-maximal-HF} gives $\rt(X)=3$.

For {\rm (v)}, since $Z$ lies on no conic, $(I_Z)_2=0$ and the reduction map
$(I_Z)_3\lar k[x,y]_3$ is an isomorphism. Hence there exists a unique
$F\in(I_Z)_3$ whose image modulo $z$ is $g$, and $I_X=zI_Z+(F)$. Thus the cubic part
of $I_X$ is one-dimensional, and again
$\HF_{R/I_X}=1,\ 3,\ 6,\ 9,\ 9,\ldots$.
Lemma~\ref{lem:nine-points-maximal-HF} gives $\rt(X)=3$.
\end{proof}

\begin{Remark}\label{rem:r6-s9-conic-condition}
In Proposition~\ref{prop:r6-s9-classification}\textup{(iv)}, the condition
$g\in(\overline q,\overline c)_3$ means that there exists a cubic through the six
points of $Z$ whose restriction to $L$ cuts out the divisor $Y$. Equivalently, there is
a cubic through all nine points of $X$ which is not forced to contain $L$ as a
component. If no such cubic exists, then the only cubic through $X$ is $zq$, and
$X$ has maximal Hilbert function $1,3,6,9,9,\ldots$.
\end{Remark}
The preceding results complete the classification for residual blocks of cardinality $r\leq 6$. The next residual size, $r=7$, is qualitatively different: cubic equations through $Z$ enter essentially, and the relation type is no longer determined by the line-and-conic incidence data alone.

\section{Configurations with Small Number of Points}
\label{sec:small-number-points}

In this section we apply the preceding results to finite sets of small cardinality. The
main point is that, up to ten points, relation type has only the values $1$ and $3$.

\begin{Theorem}\label{thm:rt-up-to-ten-points}
Let $X\subseteq \PP^2_k$ be a finite set of $s$ distinct points with $4\leq s\leq 10$.
Then $\rt(X)\in\{1,3\}$. More precisely:
\begin{enumerate}
\item[\rm (i)] If $s=4$ or $s=5$, then $\rt(X)=1$.

\item[\rm (ii)] If $s=6$, then
\[
\rt(X)=
\begin{cases}
1,& \text{if }X\text{ is contained in a conic},\\
3,& \text{otherwise}.
\end{cases}
\]

\item[\rm (iii)] If $s=7$, call $X$ a \emph{four-plus-three configuration} if exactly
four points of $X$ are collinear and the remaining three are not collinear. Then
\[
\rt(X)=
\begin{cases}
3,& \text{if }X\text{ is a four-plus-three configuration},\\
1,& \text{otherwise}.
\end{cases}
\]

\item[\rm (iv)] If $s=8$, then $\rt(X)=3$ exactly in the following cases:
\begin{enumerate}
\item[\rm (a)] exactly five points of $X$ are collinear and the remaining three are not
collinear;
\item[\rm (b)] exactly four points of $X$ are collinear, and among the remaining four
points exactly three are collinear;
\item[\rm (c)] no four points of $X$ are collinear, and seven, but not all eight, points
of $X$ lie on a conic.
\end{enumerate}
In all other cases, $\rt(X)=1$.

\item[\rm (v)] If $s=9$, then $\rt(X)=3$ exactly in the following cases:
\begin{enumerate}
\item[\rm (a)] $X$ has maximal Hilbert function
\[
\HF_{R/I_X}=1,\ 3,\ 6,\ 9,\ 9,\ldots;
\]
\item[\rm (b)] exactly six points of $X$ are collinear and the remaining three are not
collinear;
\item[\rm (c)] exactly five points of $X$ are collinear, and among the remaining four
points exactly three are collinear.
\end{enumerate}
In all other cases, $\rt(X)=1$.

\item[\rm (vi)] If $s=10$, then $\rt(X)=3$ exactly in the following cases:
\begin{enumerate}
\item[\rm (a)] exactly seven points of $X$ are collinear and the remaining three are not
collinear;
\item[\rm (b)] exactly six points of $X$ are collinear, and among the remaining four
points exactly three are collinear;
\item[\rm (c)] exactly five points of $X$ are collinear and the remaining five points are
not collinear;
\item[\rm (d)] $X$ has maximal Hilbert function
\[
\HF_{R/I_X}=1,\ 3,\ 6,\ 10,\ 10,\ldots;
\]
\item[\rm (e)] no four points of $X$ are collinear, and nine, but not all ten, points of
$X$ lie on a conic.
\end{enumerate}
In all other cases, $\rt(X)=1$.
\end{enumerate}
\end{Theorem}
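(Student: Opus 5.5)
The plan is to organize the proof by the maximal number $m$ of collinear points of $X$ and by the Hilbert function. There are three basic tools. First, Proposition~\ref{prop:points-on-conic} disposes of every $X$ lying on a conic. Second, whenever $m\geq 3$, $X$ is an $(s-r)$-fold collinear configuration with $r=s-m$, and we invoke Propositions~\ref{prop:r012}, \ref{prop:r4-classification}, \ref{prop:r5-classification} (together with Remark~\ref{rem:r5-tangent-conic}), \ref{prop:r6-classification-s-geq10}, \ref{prop:r6-s9-classification}, and Corollary~\ref{cor:rt-s-3-fold}. Third, when no line carries many points we use Hilbert function arguments: the gap Proposition~\ref{prop:points-plane-gap} for linearly presented ideals, Lemma~\ref{lem:cubic-four-generated-rt} and Lemma~\ref{lem:nine-points-maximal-HF}, and Remark~\ref{rem:ci-linear-type} whenever $\mu(I_X)\leq 3$.

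For $s\leq 5$ every set lies on a conic, so $\rt(X)=1$. For $s=6$, if $X$ is not on a conic then $(I_X)_2=0$, so $X$ is $3$-triangular in generic position and Example~\ref{ex:triangular-number-general-points} gives $\rt(X)=3$.

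For $s=7$, note first that four-plus-three is exactly the $r=3$ case, where Corollary~\ref{cor:rt-s-3-fold} gives $\rt(X)=3$. If $m\geq 5$, the residual block has at most two points and Proposition~\ref{prop:r012} applies. If $m\leq 3$, then $\dim(I_X)_2\geq 6-7+\ldots$ is not enough to conclude directly, so we count instead. The Hilbert function is $1,3,6,7$ when $X$ lies on no conic, and then $I_X$ is generated by three cubics, so $\mu\leq 3$. If $X$ lies on a conic, Proposition~\ref{prop:points-on-conic} applies.

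For $s=8,9,10$ we follow the same pattern.

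\emph{Case $m\geq 4$.} Write $r=s-m\in\{3,\ldots,6\}$ and read off the answer from the $r$-classifications above. This produces items (iv)(a,b), (v)(b,c) and (vi)(a,b,c). Item (vi)(c) comes from Proposition~\ref{prop:r5-classification} with $s=10$: there only five collinear residual points give $\rt=1$, and four-plus-one residual patterns are excluded by maximality of $m$ or fall under case (b) with $s\neq 9$.

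\emph{Case $m\leq 3$.} No four points are collinear, and we split by conic containment. Suppose first that $s-1$ points lie on a conic $C$ and the last point is off $C$. The conic is then irreducible or a line pair with at most $3+3$ points, which is impossible for $s\geq 8$, so $C$ is irreducible. A direct basic double-link with respect to $C$ in place of a line computes $I_X=qI_p+\ldots$, where $q$ is the conic and $p$ the extra point; this gives $\mu(I_X)=4$ with one linear-block determinant. Lemma~\ref{lem:cubic-four-generated-rt}, or its quartic analogue via Morey's criterion, then gives $\rt(X)=3$. This yields (iv)(c) and (vi)(e). For $s=9$ the same configuration has $\HF=1,3,5,8,9$, and I would check that $\mu(I_X)\leq 3$ there; this explains why $9$ points with eight on a conic do not appear in (v).

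Otherwise the Hilbert function is determined by $\dim(I_X)_2$ and $\dim(I_X)_3$. For $s=8$ with no conic, $\HF=1,3,6,8$ and $I_X$ has two cubics and at most one quartic, so $\mu\leq 3$. For $s=9$ one gets either $\HF=1,3,6,9$, which gives (v)(a) by Lemma~\ref{lem:nine-points-maximal-HF}, or a complete intersection of two cubics, which has $\rt=1$. For $s=10$ there is the generic-position case $1,3,6,10$, which is linearly presented with four cubics, so Proposition~\ref{prop:points-plane-gap} gives $\rt(X)=3$. In the remaining cases $\mu\leq 3$.

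The main obstacle is the second case for $s=8,9,10$. There we must verify, using Cayley--Bacharach type facts, that every non-generic Hilbert function with no four collinear points and no $(s-1)$ points on a conic actually yields $\mu(I_X)\leq 3$. The point is to exclude possible extra generators in degree $4$. I would attack this first by computing the Hilbert--Burch degree matrices compatible with each Hilbert function and ruling out four-generator shapes by Burch's bound on the cubic through $X$.
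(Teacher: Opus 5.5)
Your overall architecture is the paper's: conics give $\rt=1$; then sort by the maximal number $m$ of collinear points, using the $r$-classifications for $m\ge 4$, Hilbert-function arguments for $m\le 3$, and Morey's criterion when $s-1$ points lie on a conic. Several steps, however, do not go through as written.

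First, the case $m\ge 4$ cannot simply be ``read off'' into (iv)(a,b), (v)(b,c), (vi)(a,b,c). For $s=9$, $m=4$ and no four points of $Z$ collinear, Proposition~\ref{prop:r5-classification} returns $\rt(X)=3$, which is neither (v)(b) nor (v)(c). You must show these $X$ have maximal Hilbert function: every cubic through $X$ contains $L$, and $Z$ lies on a unique conic, so $\dim_k(I_X)_3=1$. Likewise, for $s=10$, $m=4$ and $Z$ on no conic, the value $3$ must be identified with (vi)(d) via $(I_X)_3=0$. More seriously, take $s=10$ with two lines $L,M$ each carrying exactly four points (so $L\cap M\notin X$) plus two further points. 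Here Proposition~\ref{prop:r6-classification-s-geq10} only says $\rt(X)=1$ iff $\mu(I_X)\le 3$. Nothing in your plan computes $\mu(I_X)$ for this configuration, and your $m\le 3$ analysis does not reach it. You need $(I_X)_3=k\cdot LMN$, $\HF_{R/I_X}=1,3,6,9,10,\ldots$, and hence $\mu(I_X)=3$, as the paper does.

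Second, every $\rt=1$ conclusion for $m\le 3$ rests on $\mu(I_X)\le 3$. So does your $s=7$ claim that $I_X$ is generated by three cubics. You defer this, and the tool you propose cannot supply it: Burch's bound for points on a cubic only gives $\mu\le 4$. The missing idea is the paper's fixed-component analysis of $(I_X)_3$. With no four points collinear, two independent cubics through $X$ share a component only if $s-1$ points lie on a conic. Indeed, a common line leaves a pencil of conics through at least $s-3\ge 5$ points, and a common conic leaves a pencil of lines through the points off it. This gives:
\begin{itemize}
\item for $s=8$, a pencil without fixed part, so $\dim_k R_1(I_X)_3=6$ and exactly one quartic generator;
\item for $s=9$, the complete intersection of two cubics;
\item for $s=10$ with $\dim_k(I_X)_3\ge 2$, nine points on a conic.
\end{itemize}
For $s=10$ with $\dim_k(I_X)_3=1$, the Hilbert function still allows a ghost quintic generator (generator degrees $3,4,4,5$). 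This must be excluded geometrically, e.g.\ by linking $X$, via the cubic and a quartic with no common component, to a length-two scheme.

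Third, consider the $(s-1)$-on-a-conic case. There $X\cap C$ has odd degree on $C$, so it is not cut out by a single form; the construction is $I_X=qI_p+(F_1,F_2)$. Also, Lemma~\ref{lem:cubic-four-generated-rt} does not apply, since $\dim_k(I_X)_3=2$. What you need is Morey's criterion with a nonzero $3$-minor of the Jacobian dual modulo $\fm S$. That minor comes from the linear syzygy of $q\ell_1,q\ell_2$ together with the linear $2\times 2$ block of determinant $q$.

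Finally, fix two slips. Eight points on a conic plus one point have $\HF=1,3,6,8,9,\ldots$, not $1,3,5,8,9$. Ten general points give five quartic generators, not four cubics.
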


\begin{proof}
If $s=4$ or $s=5$, then $X$ is contained in a conic, and
Proposition~\ref{prop:points-on-conic} gives $\rt(X)=1$.

\medskip
\noindent{\bf Case $s=6$.}. If $X$ is contained in a conic, then $\rt(X)=1$. Assume that $X$ is not
contained in a conic. If three points of $X$ are collinear, then the remaining three
points are not collinear; otherwise $X$ would lie on a reducible conic. Hence
Corollary~\ref{cor:rt-s-3-fold} gives $\rt(X)=3$. If no three points are collinear, then
$(I_X)_2=0$, so $X$ consists of $6=\binom{4}{2}$ points in generic position. Thus
$I_X$ has a linear Hilbert--Burch resolution
\[
0\lar R(-4)^3\lar R(-3)^4\lar I_X\lar 0,
\]
and Proposition~\ref{prop:points-plane-gap} gives $\rt(X)=3$.

\medskip
\noindent{\bf Case $s=7$.}
If $X$ is a four-plus-three configuration, then
Corollary~\ref{cor:rt-s-3-fold} gives $\rt(X)=3$. Conversely, assume that $X$ is not a
four-plus-three configuration. If $X$ is contained in a conic, then $\rt(X)=1$. Hence
assume that $X$ is not contained in a conic. If at least five points are collinear, then
$X$ is an $(s-r)$-fold collinear configuration with $r\leq 2$, so
Proposition~\ref{prop:r012} gives $\rt(X)=1$. If exactly four points are collinear, then
the remaining three points must be collinear, because $X$ is not a four-plus-three
configuration; hence $X$ lies on a reducible conic, a contradiction. Thus no four points
are collinear.

Therefore $(I_X)_2=0$ and
\[
\HF_{R/I_X}=1,\ 3,\ 6,\ 7,\ 7,\ldots .
\]
Hence $\dim_k(I_X)_3=3$. Let $J$ be the ideal generated by $(I_X)_3$. The cubic system
has no fixed component: a fixed conic would contain $X$, while a fixed line would leave
a system of conics through at least four points, of vector-space dimension at most $2$,
contradicting $\dim_k(I_X)_3=3$. Thus $\hht J=2$. Since $J$ is generated by three
cubics, its Hilbert--Burch matrix has column degrees $1$ and $2$, and so
\[
\deg(R/J)=1^2+1\cdot 2+2^2=7.
\]
As $J\subseteq I_X$ and both ideals define schemes of degree $7$, we get $J=I_X$.
Thus $I_X$ is a strict almost complete intersection, and
Remark~\ref{rem:ci-linear-type} gives $\rt(X)=1$.

\medskip
\noindent{\bf Case $s=8$.}
 If $X$ is contained in a conic, then $\rt(X)=1$. Assume that $X$ is not
contained in a conic. Then at most five points are collinear. If exactly five points are
collinear, the residual three points are not collinear, and
Corollary~\ref{cor:rt-s-3-fold} gives $\rt(X)=3$.

If exactly four points are collinear, Proposition~\ref{prop:r4-classification} applies:
it gives $\rt(X)=3$ precisely when the residual four-point block has exactly three
collinear points, and gives $\rt(X)=1$ in the remaining residual cases.

It remains to assume that no four points are collinear. Then
\[
\HF_{R/I_X}=1,\ 3,\ 6,\ 8,\ 8,\ldots ,
\]
so $\dim_k(I_X)_3=2$ and $\dim_k(I_X)_4=7$. Suppose first that seven points lie on a
conic $C=V(q)$, and let $p$ be the remaining point. Since no four points are collinear,
$C$ is irreducible, and
\[
(I_X)_3=q(I_p)_1.
\]
Thus $\dim_k R_1(I_X)_3=5$, while $\dim_k(I_X)_4=7$. Hence $I_X$ has two cubic and two
quartic minimal generators, and its Hilbert--Burch resolution has the form
\[
0\lar R(-4)\oplus R(-5)^2
\lar R(-3)^2\oplus R(-4)^2
\lar I_X\lar 0.
\]
The linear syzygy between the two cubic generators, together with the linear entries in
the remaining columns, gives a Jacobian-dual minor nonzero modulo $\fm S$. Morey's
criterion \cite[Proposition~3.1]{Morey} gives $\rt(X)\leq 3$. Since $\mu(I_X)=4$,
$I_X$ is not of linear type, and Proposition~\ref{prop:points-plane-gap} gives
$\rt(X)=3$.

If no seven points lie on a conic, then the pencil $(I_X)_3$ has no fixed component.
Thus its two cubic generators have no common component, and
$\dim_k R_1(I_X)_3=6$. Since $\dim_k(I_X)_4=7$, exactly one quartic generator is needed.
Hence $I_X$ is generated by two cubics and one quartic, so it is a strict almost
complete intersection. Therefore $\rt(X)=1$.

\medskip
\noindent{\bf Case $s=9$.}
 If $X$ is contained in a conic, then $\rt(X)=1$. Assume that $X$ is not
contained in a conic. If seven or more points are collinear, then
Proposition~\ref{prop:r012} gives $\rt(X)=1$. If exactly six points are collinear, the
residual three points are not collinear, and Corollary~\ref{cor:rt-s-3-fold} gives
$\rt(X)=3$.

Assume next that exactly five points are collinear, say $X=Y\cup Z$ with $|Y|=5$ and
$|Z|=4$. The residual block $Z$ is not collinear. If no three points of $Z$ are
collinear, then Proposition~\ref{prop:r4-classification} gives $\rt(X)=1$. If exactly
three points of $Z$ are collinear, then the same proposition gives $\rt(X)=3$, since
$s=9>7$.

Now suppose no line contains five points, but some line contains exactly four points.
Write $X=Y\cup Z$ with $|Y|=4$ and $|Z|=5$. If no three points of $Z$ are collinear,
then every cubic through $X$ contains the line through $Y$, and the residual conic
through $Z$ is unique. Hence $\dim_k(I_X)_3=1$, so $X$ has maximal Hilbert function;
Lemma~\ref{lem:nine-points-maximal-HF} gives $\rt(X)=3$. If $Z$ contains four
collinear points, then, since no line contains five points, Proposition~\ref{prop:r5-classification}
gives $\rt(X)=1$. If $Z$ contains exactly three collinear points, choose that line as
the distinguished line. Then Proposition~\ref{prop:r6-s9-classification} applies; by
inspection of its alternatives, the relation-type-three cases are precisely the
maximal-Hilbert-function cases, and otherwise $\rt(X)=1$.

The same argument applies if the maximum number of collinear points in $X$ is three:
choose a collinear triple as distinguished line and apply
Proposition~\ref{prop:r6-s9-classification}. Again the relation-type-three cases are
exactly the maximal-Hilbert-function cases.

Finally assume that no three points are collinear. Since $X$ is not contained in a
conic, $(I_X)_2=0$. If $\dim_k(I_X)_3=1$, then $X$ has maximal Hilbert function, and
Lemma~\ref{lem:nine-points-maximal-HF} gives $\rt(X)=3$. If
$\dim_k(I_X)_3\geq 2$, choose two independent cubics through $X$. If they have no
common component, they form a complete intersection of type $(3,3)$ containing the
nine reduced points of $X$, hence equal to $X$; thus $\rt(X)=1$. If they have a common
component, it cannot be a line, since no three points are collinear. Hence eight points
of $X$ lie on a conic and the ninth lies off it. Then $I_X$ is generated in degrees
$3,3,4$, hence is a strict almost complete intersection. Thus $\rt(X)=1$.

 \medskip
\noindent{\bf Case $s=10$.}
 If $X$ is contained in a conic, then $\rt(X)=1$. Assume that $X$ is
not contained in a conic. If at least eight points are collinear, then $X$ lies on a
reducible conic, impossible. If exactly seven points are collinear, the residual three
points are not collinear, and Corollary~\ref{cor:rt-s-3-fold} gives $\rt(X)=3$.

If exactly six points are collinear, write $X=Y\cup Z$ with $|Y|=6$ and $|Z|=4$.
Proposition~\ref{prop:r4-classification} gives $\rt(X)=3$ precisely when $Z$ has
exactly three collinear points; otherwise $\rt(X)=1$.

If exactly five points are collinear, write $X=Y\cup Z$ with $|Y|=5$ and $|Z|=5$.
Since $X$ is not contained in a conic, $Z$ is not collinear. If four points of $Z$ are
collinear, Proposition~\ref{prop:r5-classification} gives $\rt(X)=3$.
If no four points of $Z$ are collinear, then $Z$ lies on a unique conic. If this conic
meets the distinguished line in two distinct points, Proposition~\ref{prop:r5-classification}
gives $\rt(X)=3$. If the intersection is non-reduced, the same proof applies after
replacing the normal form $\overline q=xy$ by $\overline q=x^2$. Hence $\rt(X)=3$ in
all cases where exactly five, but not all five residual points, are collinear.

Now assume that no line contains five points, but some line contains exactly four
points. Write $X=Y\cup Z$, where $|Y|=4$ and $Y$ lies on a line $L$. If $Z$ lies on no
conic, then every cubic through $X$ contains $L$, and the residual conic would have to
contain $Z$. Hence $(I_X)_3=0$, so $X$ has maximal Hilbert function. Therefore
Example~\ref{ex:triangular-number-general-points} and
Proposition~\ref{prop:points-plane-gap} give $\rt(X)=3$.

Assume that $Z$ lies on a conic. If no four points of $Z$ are collinear, then
Proposition~\ref{prop:r6-conic-residual} gives $\rt(X)=1$. If exactly four points of
$Z$ are collinear, then $X$ has no quadric equation and has a unique cubic equation,
the product of the two four-point lines and the line through the remaining two points.
Thus
\[
\HF_{R/I_X}=1,\ 3,\ 6,\ 9,\ 10,\ 10,\ldots .
\]
Equivalently, the Hilbert series gives a resolution
\[
0\lar R(-5)\oplus R(-6)
\lar R(-3)\oplus R(-4)^2
\lar I_X\lar 0.
\]
Hence $\mu(I_X)=3$, and Remark~\ref{rem:ci-linear-type} gives $\rt(X)=1$.

It remains to assume that no four points are collinear. Since $X$ is not contained in a
conic, $(I_X)_2=0$. If $(I_X)_3=0$, then $X$ has maximal Hilbert function
\[
\HF_{R/I_X}=1,\ 3,\ 6,\ 10,\ 10,\ldots .
\]
Since $10=\binom{5}{2}$, Example~\ref{ex:triangular-number-general-points} gives a
linear Hilbert--Burch resolution
\[
0\lar R(-5)^4\lar R(-4)^5\lar I_X\lar 0,
\]
and Proposition~\ref{prop:points-plane-gap} gives $\rt(X)=3$.

If $\dim_k(I_X)_3=1$, then
\[
\HF_{R/I_X}=1,\ 3,\ 6,\ 9,\ 10,\ 10,\ldots .
\]
Thus the Hilbert series gives
\[
\mathrm{HS}_{R/I_X}(t)=\frac{1-t^3-2t^4+t^5+t^6}{(1-t)^3}.
\]
Hence $I_X$ has one cubic generator and two quartic generators, with Hilbert--Burch
resolution
\[
0\lar R(-5)\oplus R(-6)
\lar R(-3)\oplus R(-4)^2
\lar I_X\lar 0.
\]
Thus $I_X$ is a strict almost complete intersection, and $\rt(X)=1$.

Finally assume that $\dim_k(I_X)_3\geq 2$. Any two independent cubics through $X$ have
a common component, since a complete intersection of two cubics has degree $9$. The
common component cannot be a line, because no four points are collinear; hence it is a
conic. Since $X$ is not contained in a conic, exactly nine points lie on this conic and
the tenth point lies off it. Let $C=V(q)$ be the conic and let $p$ be the remaining
point, with $I_p=(\ell_1,\ell_2)$. Then
\[
(I_X)_3=q(I_p)_1.
\]
Moreover every quartic through $X$ is divisible by $q$, since its restriction to
$C\simeq\PP^1$ has degree $8$ and vanishes at the nine points of $X\cap C$. Hence
\[
(I_X)_4=q(I_p)_2=R_1(I_X)_3.
\]
Thus there are no new quartic generators. In degree $5$,
$\dim_k(I_X)_5=21-10=11$, while
$\dim_k R_2(I_X)_3=\dim_k q(I_p)_3=9$; hence two quintic generators are needed. Therefore
$I_X$ has two cubic and two quintic minimal generators, and the Hilbert series gives
\[
0\lar R(-4)\oplus R(-6)^2
\lar R(-3)^2\oplus R(-5)^2
\lar I_X\lar 0.
\]
In particular $\mu(I_X)=4$, so $I_X$ is not of linear type. The linear syzygy between
$q\ell_1$ and $q\ell_2$, together with the linear entries in the remaining columns,
gives a Jacobian-dual minor nonzero modulo $\fm S$. Hence Morey's criterion gives
$\rt(X)\leq 3$. Since relation type $2$ does not occur for ideals of points in $\PP^2_k$,
we get $\rt(X)=3$.

The cases are exhaustive, and the theorem follows.
\end{proof}

\section{The First Higher Values and Final Questions}
\label{sec:first-higher-values}

Theorem~\ref{thm:rt-up-to-ten-points} shows that, for finite sets of at most ten points
in $\PP^2_k$, the relation type is always either $1$ or $3$. In this final section we
show that the first higher value occurs for eleven points in generic position. We then
explain why relation type $4$ is not a global gap, and we close with some questions.

We first recall two ingredients. Let $I=I_X$ be the defining ideal of a finite reduced
set of points in $\PP^2_k$, let $S=R[T_0,\ldots,T_m]$, and let $\Q\subseteq S$ be the
defining ideal of $\Rees_R(I)$. If $\mathcal L=\Q\langle 1\rangle$ is the ideal of
linear equations of the symmetric algebra, then, as explained in
Section~\ref{sec:rt-geometry-points},
\[
\Q=\mathcal L:_S(\fm S)^\infty .
\]
Thus, for point ideals, the nonlinear equations of the Rees algebra are obtained by
saturating the symmetric equations with respect to $\fm S$.

We shall also use Jouanolou's terminology of \emph{formes d'inertie}. Let
$B=k[T_0,T_1,T_2,T_3]$ and $S=B[x,y,z]$. Suppose that
\[
\mathcal L=(L_1,L_2,Q)\subseteq S,
\]
where $L_1,L_2$ have bidegree $(1,1)$ and $Q$ has bidegree $(2,1)$ with respect to the
$(x,y,z)$-degree and the $T$-degree. Then
$\mathcal L:(x,y,z)^\infty$ is the ideal of inertia forms of the homogeneous system
$L_1,L_2,Q$. In \cite[\S3.11.19]{JouanolouInertia}, Jouanolou constructs these inertia
forms and relates the final one to the resultant. For three homogeneous forms of degrees
$d_1,d_2,d_3$, the resultant has total degree
$d_1d_2+d_1d_3+d_2d_3$ in the coefficients; see
\cite[3.11.19.25]{JouanolouInertia}. In particular, for the bidegree pattern
$(1,1,2)$ considered below, the inertia-form ideal
$\mathcal L:(x,y,z)^\infty$ has no minimal generator of $T$-degree larger than $5$.
The relation with torsion in the symmetric algebra and implicitization is also explained
in \cite{BCJ}.

\begin{Theorem}\label{thm:eleven-generic-position-rt-five}
Let $X\subseteq \PP^2_k$ be a set of eleven points in generic position. Then
$\rt(X)=5$.
\end{Theorem}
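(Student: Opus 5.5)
We follow the outline sketched in the introduction. The plan has four steps: set up the resolution, obtain the lower bound $\rt(X)\geq 5$ from the special fiber, obtain the upper bound $\rt(X)\leq 5$ from Jouanolou's inertia forms, and combine.

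\emph{Step 1: resolution.} For eleven points in generic position,
\[
\HF_{R/I_X}=1,3,6,10,11,11,\ldots,
\]
so $(I_X)_3=0$ and $\dim_k(I_X)_4=15-11=4$. From the Hilbert series
\[
\mathrm{HS}_{R/I_X}(t)=\frac{1-4t^4+2t^5+t^6}{(1-t)^3}
\]
we read off a resolution
\[
0\lar R(-5)^2\oplus R(-6)\stackrel{\Phi}{\lar} R(-4)^4\lar I_X\lar 0 .
\]
We need to exclude a ghost cancellation between a quintic generator and a quintic syzygy. This follows because the Hilbert--Burch matrix has no constant entries, and the degrees force four quartic generators.

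\emph{Step 2: symmetric algebra.} The Hilbert--Burch matrix has column degrees $(1,1,2)$. Hence, in $S=R[T_0,\ldots,T_3]$, the symmetric ideal is $\mathcal L=(L_1,L_2,Q)$ with bidegrees $(1,1),(1,1),(2,1)$.

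\emph{Step 3: lower bound $\rt(X)\geq 5$.} Since $I_X$ is equigenerated, the special fiber $\mathcal F(I_X)$ is the coordinate ring of the image $Y$ of the quartic map $\phi:\PP^2_k\dashrightarrow\PP^3_k$. Then $Y$ is a surface: the four quartics are algebraically independent, since otherwise the image would be a curve, and the base locus $X$ is finite with $\mu((I_X)_\fp)=2$ on the punctured spectrum. By \cite[Corollary~C]{CidRuiz},
\[
\deg(\phi)\deg_{\PP^3}(Y)=e_2(1,1,2)=1+2+2=5 .
\]
Since $5$ is prime, either $\deg Y=5$, or $\deg Y=1$ and $\deg\phi=5$. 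The second option would mean the four quartics satisfy a linear relation, contradicting $\dim_k(I_X)_4=4$ with the generators linearly independent. So $Y$ is a quintic surface. Its irreducible equation $G$ of degree $5$ is a minimal generator of $\mathcal K$, because $\mathcal K$ is principal. By Nakayama, any lift of $G$ to $\Q$ is a minimal generator of $\Q$ of $T$-degree $5$. Hence $\rt(X)\geq 5$.

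\emph{Step 4: upper bound $\rt(X)\leq 5$.} By the saturation description, $\Q=\mathcal L:(\fm S)^\infty$. This is the ideal of inertia forms of the system $L_1,L_2,Q$, viewed as forms in $x,y,z$ of degrees $1,1,2$ with coefficients in $B=k[T_0,\ldots,T_3]$. We invoke Jouanolou's structure of inertia forms \cite[\S3.11.19]{JouanolouInertia}. In $(x,y,z)$-degree $\nu\geq \delta+1=2$, where $\delta=(1+1+2)-3=1$ is the critical degree, the inertia forms coincide with $\mathcal L$. In degrees $\nu\leq \delta$, that is $\nu=0,1$, they are generated in $T$-degree at most the degree of the resultant. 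That degree is $1\cdot1+1\cdot2+1\cdot2=5$, the resultant being the final inertia form of $x$-degree $0$. Concretely:
\begin{itemize}
\item in $x$-degree $1$, the inertia forms come from Jacobian-dual type determinants of $T$-degree $3$;
\item in $x$-degree $0$, they form the principal ideal generated by the resultant, of $T$-degree $5$.
\end{itemize}
Thus $\Q$ is generated in $T$-degree at most $5$.

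We expect the main obstacle to be justifying this application of Jouanolou. We must check that $L_1,L_2,Q$ are sufficiently generic, which here means that $\mathcal L$ is a complete intersection of height $3$. This follows from the Huneke--Rossi computation used in Theorem~\ref{thm:rt-residual-r-1-fold}: since $\mu(I_X)=4$ at $\fm$ and $I_X$ satisfies $G_3$, we get $\dim S/\mathcal L=4$. We would then use the standard fact that for such a complete intersection the inertia-form ideal is generated by $\mathcal L$ together with the Morley-type forms of degree $\leq\delta$ and the resultant. Combining Steps 3 and 4 gives $\rt(X)=5$.
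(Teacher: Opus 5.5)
Your strategy is the paper's: read the resolution off the Hilbert series, get $\rt(X)\geq 5$ from the degree formula for the special fiber, and get $\rt(X)\leq 5$ from Jouanolou's inertia forms. However, Step~1 has a genuine gap, and under the stated hypothesis it cannot be closed.

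Minimality only forbids constant entries. It does not forbid a fifth generator of degree $5$ together with a third linear syzygy (syzygy degrees $5,5,5,6$), and such a pair cancels in the Hilbert series. In that case the row of the quintic generator in the $5\times 4$ Hilbert--Burch matrix is forced to be $(0,0,0,\ell')$ with $\ell'$ linear, and nothing rules this out.

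This case actually occurs. Let $X=Y\cup Z$, where $Y$ is five points on a line $L=V(\ell)$ and $Z$ is six points off $L$ lying on no conic. Every cubic or quartic through $X$ contains $L$. Hence $(I_X)_3=\ell\,(I_Z)_2=0$, and $(I_X)_4=\ell\,(I_Z)_3$ has dimension $4$. So $\HF_{R/I_X}=1,3,6,10,11,\ldots$, and $X$ is in generic position in the paper's sense. Nevertheless:
\begin{itemize}
\item Lemma~\ref{lem:triangular-block} (with $d=3$, $a=5$) gives $\mu(I_X)=5$ and the resolution $0\to R(-5)^3\oplus R(-6)\to R(-4)^4\oplus R(-5)\to I_X\to 0$.
\item Consequently $I_X$ is not equigenerated, and $\mathcal L$ is not a complete intersection: Huneke--Rossi now gives $\dim S/\mathcal L=5$. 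Your Steps~2--4 all fail for this $X$.
\item The paper's own Theorem~\ref{thm:rt-triangular-block} (see Proposition~\ref{prop:r6-generic-residual}) assigns this $X$ relation type $3$.
\end{itemize}
So the statement, read with the paper's definition of generic position, conflicts with the paper's other results. The paper's proof has the same hole, since it also reads the resolution directly off the Hilbert series.

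What is missing is a hypothesis guaranteeing $\mu(I_X)=4$. In the ghost case, expanding along the row $(0,0,0,\ell')$ shows that every quartic generator is $\pm\ell'$ times a $3\times 3$ minor of the linear block. Thus $(I_X)_4=\ell' W$ with $\dim_k W=4$. If at most four points of $X$ lie on $V(\ell')$, then any seven of the remaining points lie on the cubics of $W$. They therefore impose at most six conditions on cubics, which forces five of them to be collinear. Conversely, five collinear points with this Hilbert function force exactly the configuration above. Hence, for eleven points with generic Hilbert function, $\mu(I_X)=4$ precisely when no five are collinear; this holds in particular for general points.

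Under that extra hypothesis your Steps~2--4 go through. Your Step~4 is in fact more careful than the paper's. The Huneke--Rossi check that $\mathcal L$ is a complete intersection is exactly what makes the duality $(\Q/\mathcal L)_\nu\simeq\Hom_B\bigl((S/\mathcal L)_{1-\nu},B\bigr)(-3)$ (in $x$-degree $\nu$) available for this specialized system. The paper only quotes Jouanolou's statement for generic systems.

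One local correction in Step~3: four quartics in three variables are never algebraically independent, and your $G$ is itself a relation among them. What you need is that the image is a surface. This holds, for example, because the moving part of the linear system on the blowup has self-intersection $16-11=5\neq 0$.
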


\begin{proof}
Since $X$ is in generic position,
\[
\HF_{R/I_X}=1,\ 3,\ 6,\ 10,\ 11,\ 11,\ldots .
\]
Thus $(I_X)_3=0$ and $\dim_k(I_X)_4=15-11=4$. Hence
\[
\mathrm{HS}_{R/I_X}(t)=\frac{1-4t^4+2t^5+t^6}{(1-t)^3}.
\]
Since $I_X$ is saturated of height two, it is perfect, and Hilbert--Burch gives
\[
0\lar R(-5)^2\oplus R(-6)\lar R(-4)^4\lar I_X\lar 0.
\]
Thus $I_X=(f_0,f_1,f_2,f_3)$ is generated by four quartics, and its Hilbert--Burch
column degrees are $(1,1,2)$.

We first prove the lower bound. The four quartics have no common factor, since $I_X$
has height two. Hence the linear system $(I_X)_4$ has no fixed component, and the
associated rational map
\[
\phi:\PP^2_k\dashrightarrow \PP^3_k
\]
is generically finite onto a surface $Y\subseteq\PP^3_k$. Since $X$ is reduced, $I_X$
is locally a complete intersection on the punctured spectrum, and hence satisfies $G_3$.
By the special-fiber degree formula \cite[Corollary~C]{CidRuiz},
\[
\deg(\phi)\deg_{\PP^3}(Y)=e_2(1,1,2)=1\cdot 1+1\cdot 2+1\cdot 2=5.
\]
The four quartics are linearly independent, so $Y$ is not contained in a plane. Since
$5$ is prime, it follows that
\[
\deg(\phi)=1,\qquad \deg_{\PP^3}(Y)=5.
\]
Let $B=k[T_0,T_1,T_2,T_3]$, and let $\mathcal K$ be the defining ideal of the special
fiber $\mathcal F(I_X)\simeq B/\mathcal K$. Since $Y\subseteq\PP^3_k$ is a surface,
$\mathcal K$ is a height-one prime ideal of the UFD $B$, hence
$\mathcal K=(E)$ for an irreducible form $E$ of degree $5$. Since the special fiber is
obtained from the Rees algebra by reducing modulo $\fm=(x,y,z)$, the equation $E$ lifts
to an equation of the Rees algebra of $T$-degree $5$. Hence $\rt(X)\geq 5$.

For the upper bound, let $\mathcal L$ be the ideal of linear equations of
$\operatorname{Sym}_R(I_X)$. The Hilbert--Burch column degrees $(1,1,2)$ give
\[
\mathcal L=(L_1,L_2,H),
\]
where $L_1,L_2$ have bidegree $(1,1)$ and $H$ has bidegree $(2,1)$. Since $X$ is a
finite reduced set of points,
\[
\Q=\mathcal L:_S(\fm S)^\infty .
\]
By the inertia-form bound recalled above, this saturation has no minimal generator of
$T$-degree larger than $5$. Thus $\rt(X)\leq 5$, and therefore $\rt(X)=5$.
\end{proof}

Together with Theorem~\ref{thm:rt-up-to-ten-points},
Theorem~\ref{thm:eleven-generic-position-rt-five} shows that relation type larger than
$3$ first appears at cardinality eleven: if $|X|\leq 10$, then
$\rt(X)\in\{1,3\}$, while eleven points in generic position have relation type $5$.

\begin{Question}\label{ques:eleven-points-spectrum}
Determine the relation-type spectrum of finite sets of eleven points in $\PP^2_k$.
Theorem~\ref{thm:eleven-generic-position-rt-five} shows that $5$ occurs. Is it true that
\[
\{\,\rt(X)\mid X\subseteq \PP^2_k,\ |X|=11\,\}\subseteq \{1,3,5\}?
\]
\end{Question}

The restriction to eleven points in Question~\ref{ques:eleven-points-spectrum} is
essential. Although relation type $4$ does not occur for at most ten points, it is not
a global gap. The special fiber gives a simple numerical mechanism by which relation
type $4$ can appear.

Let $I=(f_0,f_1,f_2,f_3)\subseteq R$ be an equigenerated height-two perfect ideal with
Hilbert--Burch column degrees $(d_1,d_2,d_3)$, and assume that the associated rational
map
\[
\phi:\PP^2_k\dashrightarrow \PP^3_k
\]
is generically finite onto its image $Y$. For point ideals, the special-fiber degree
formula \cite[Corollary~C]{CidRuiz} gives
\[
\deg(\phi)\deg(Y)=d_1d_2+d_1d_3+d_2d_3.
\]
For eleven points in generic position the column degrees are $(1,1,2)$, and the right
side is $5$. By contrast, the column degrees $(1,2,2)$ give
\[
d_1d_2+d_1d_3+d_2d_3=1\cdot2+1\cdot2+2\cdot2=8.
\]
If in such a case $\deg(\phi)=2$, then the image surface has degree $4$. Thus the
special fiber has a quartic implicit equation, and this gives a Rees equation of
$T$-degree $4$.

\begin{Example}\label{ex:seventeen-points-rt-four}
There exists a reduced set $X\subseteq \PP^2_k$ of $17$ points with relation type $4$.
Work over $k=\Q$, and let
\[
\begin{aligned}
X=\{&
[\pm 1:0:1],\ [\pm 2:1:1],\ [\pm 3:2:1],\ [\pm 4:3:1],\\
&[\pm 5:5:1],\ [\pm 6:8:1],\ [\pm 7:13:1],\ [\pm 8:21:1],\ [1:0:0]\}.
\end{aligned}
\]
A direct computation with \textsc{Singular} shows that $I_X$ is generated by four
quintics and has Hilbert--Burch resolution
\[
0\lar R(-6)\oplus R(-7)^2
\lar R(-5)^4
\lar I_X
\lar 0.
\]
Equivalently, the Hilbert--Burch column degrees are $(1,2,2)$. The same computation
shows that the rational map $\PP^2_k\dashrightarrow\PP^3_k$ defined by these four
quintics has degree $2$ onto its image, and the image is a quartic surface. Hence the
defining ideal of the special fiber has an essential equation of degree $4$. Moreover,
the defining ideal of the Rees algebra has no minimal equation of $T$-degree larger
than $4$. Therefore
\[
\rt(X)=4.
\]
\end{Example}

\begin{Remark}\label{rem:rt-four-not-global-gap}
Example~\ref{ex:seventeen-points-rt-four} shows that relation type $4$ cannot be
excluded globally. The phenomenon in small cardinalities is subtler: relation type $4$
does not occur for sets of at most ten points, while the first higher value is $5$,
realized by eleven points in generic position. The remaining problem is to understand
whether relation type $4$ occurs for eleven points.
\end{Remark}

We finish by recalling a family of configurations which produces further large relation
types and illustrates the role of the special fiber. Let $d=2h$ with $h\geq 1$. Choose
two distinct points $p,q\in\PP^2_k$ and two pencils of lines
\[
L_1,\ldots,L_d \quad \text{through }p,
\qquad
L'_1,\ldots,L'_d \quad \text{through }q,
\]
so that all intersection points considered below are distinct. The Geramita--Maroscia
basic configuration of rank $d$ is
\[
B_d=
\bigcup_{i=1}^{h}
\Bigl((L_{2i-1}\cup L_{2i})\cap (L'_1\cup\cdots\cup L'_{2i})\Bigr).
\]
Equivalently,
\[
B_d=\{\,L_j\cap L'_t \mid 1\leq j\leq d,\ 1\leq t\leq m_j\,\},
\qquad
(m_1,\ldots,m_d)=(2,2,4,4,\ldots,d,d).
\]
Thus
\[
|B_d|=2+2+4+4+\cdots+d+d=2h(h+1)=\frac{d(d+2)}{2}.
\]

Set
\[
\alpha_i=L_{2i-1}L_{2i},
\qquad
\beta_i=L'_{2i-1}L'_{2i},
\qquad i=1,\ldots,h.
\]
Define
\[
F_i:=\beta_1\cdots\beta_{i-1}\alpha_i\cdots\alpha_h,
\qquad i=1,\ldots,h+1.
\]
Geramita and Maroscia show that $B_d$ has generic Hilbert function, lies on no curve of
degree $<d$, and that $I_{B_d}$ is generated in degree $d$; see
\cite[Lemma~2.1 and Corollary~2.5]{GM}. More precisely,
\[
I_{B_d}=(F_1,\ldots,F_{h+1}),
\]
and $I_{B_d}$ has minimal free resolution
\[
0\lar R(-d-2)^h
\stackrel{\Psi}{\lar}
R(-d)^{h+1}
\lar I_{B_d}
\lar 0,
\]
where
\[
\Psi=
\begin{pmatrix}
\beta_1 & 0 & \cdots & 0\\
-\alpha_1 & \beta_2 & \ddots & \vdots\\
0 & -\alpha_2 & \ddots & 0\\
\vdots & \ddots & \ddots & \beta_h\\
0 & \cdots & 0 & -\alpha_h
\end{pmatrix}.
\]
Thus $I_{B_d}$ is equigenerated, but its Hilbert--Burch matrix has quadratic entries.

For $d=2$, the ideal is a complete intersection. For $d=4$, it is generated by three
quartics and is a strict almost complete intersection. Hence in both cases
$\rt(B_d)=1$. For $d=2h\geq 6$, one has $h+1\geq 4$, so $I_{B_d}$ is not of linear type,
and therefore $\rt(B_d)\geq 3$.

Let
\[
\phi_d:\PP^2_k\dashrightarrow \PP^h_k
\]
be the rational map defined by the minimal generators of $I_{B_d}$, and let $V_d$ be
the closure of its image. If $\phi_d$ is generically finite of degree $e$, then the
special-fiber degree formula gives
\[
e\deg V_d=e_2(2,\ldots,2)=4\binom{h}{2}=2h(h-1)=\frac{d(d-2)}{2}.
\]
In particular, for $d=6$ one has $h=3$, $|B_6|=24$, and
\[
e\deg V_6=12.
\]
For the standard configuration $B_6=(2,2,4,4,6,6)$, computations with
\textsc{Singular} show that the associated map is birational onto its image. Hence
$V_6\subseteq\PP^3_k$ is a surface of degree $12$, and its implicit equation gives an
essential equation of the special fiber of $T$-degree $12$. Therefore
$\rt(B_6)\geq 12$. The same computation shows that the defining ideal of the Rees
algebra has no minimal generator of $T$-degree larger than $12$. Hence
\[
\rt(B_6)=12.
\]

\begin{Question}\label{ques:final-spectrum}
For each $s\geq 1$, determine the set
\[
\mathcal R_s:=\{\,\rt(X)\mid X\subseteq \PP^2_k
\text{ is a reduced set of }s\text{ points}\,\}.
\]
We have shown that $\mathcal R_s\subseteq\{1,3\}$ for $s\leq 10$, and that
$5\in\mathcal R_{11}$. Computations show that $4\in\mathcal R_{17}$. How does
$\mathcal R_s$ grow with $s$?
\end{Question}

\begin{Question}\label{ques:GM-relation-type}
Let $B_d\subseteq \PP^2_k$ be the Geramita--Maroscia basic configuration of rank
$d=2h\geq 6$. Determine $\rt(B_d)$. More precisely, how do the equations of the special
fiber interact with the torsion equations of the symmetric algebra? Is there a closed
formula for $\rt(B_d)$ in terms of $d$?
\end{Question}


\end{document}